\def\th@plain{%
  \itshape 
}
\renewenvironment{proof}[1][\proofname]{\par
  \pushQED{\qed}%
  \normalfont \topsep6\p@\@plus6\p@\relax
  \trivlist
  \item[\hskip\labelsep
        \bfseries
    #1\@addpunct{.}]\ignorespaces
}{%
  \popQED\endtrivlist\@endpefalse
}
\newtheorem{theorem}{Theorem}[section]
\newtheorem{conjecture}[theorem]{Conjecture}
\numberwithin{equation}{section}
\newtheorem{thm}{Theorem}[section]
\newtheorem{cor}[thm]{Corollary}
\newtheorem{claim}[thm]{Claim}
\newtheorem{lemma}[thm]{Lemma}
\newtheorem{example}[thm]{Example}
\newtheorem{defn}[thm]{Definition}
\newtheorem{rem}[thm]{Remark}
\numberwithin{equation}{section}
\definecolor{RED}{rgb}{1,0,0}\definecolor{BLUE}{rgb}{0,0,1} 
\providecommand{\DIFaddbegin}{} 
\providecommand{\DIFaddend}{} 
\providecommand{\DIFdelbegin}{} 
\providecommand{\DIFdelend}{} 
\providecommand{\DIFaddbeginFL}{} 
\providecommand{\DIFaddendFL}{} 
\providecommand{\DIFdelbeginFL}{} 
\providecommand{\DIFdelendFL}{} 
\newcommand{\DIFscaledelfig}{0.5}
\newsavebox{\DIFdelgraphicsbox} 
\newlength{\DIFdelgraphicswidth} 
\newlength{\DIFdelgraphicsheight} 
\LetLtxMacro{\DIFOincludegraphics}{\includegraphics} 
\newcommand{\DIFaddincludegraphics}[2][]{{\color{blue}\fbox{\DIFOincludegraphics[#1]{#2}}}} 
\newcommand{\DIFdelincludegraphics}[2][]{
\sbox{\DIFdelgraphicsbox}{\DIFOincludegraphics[#1]{#2}}
\settoboxwidth{\DIFdelgraphicswidth}{\DIFdelgraphicsbox} 
\settoboxtotalheight{\DIFdelgraphicsheight}{\DIFdelgraphicsbox} 
\scalebox{\DIFscaledelfig}{
\parbox[b]{\DIFdelgraphicswidth}{\usebox{\DIFdelgraphicsbox}\\[-\baselineskip] \rule{\DIFdelgraphicswidth}{0em}}\llap{\resizebox{\DIFdelgraphicswidth}{\DIFdelgraphicsheight}{
\setlength{\unitlength}{\DIFdelgraphicswidth}
\begin{picture}(1,1)
\thicklines\linethickness{2pt} 
{\color[rgb]{1,0,0}\put(0,0){\framebox(1,1){}}}
{\color[rgb]{1,0,0}\put(0,0){\line( 1,1){1}}}
{\color[rgb]{1,0,0}\put(0,1){\line(1,-1){1}}}
\end{picture}
}\hspace*{3pt}}} 
} 
\LetLtxMacro{\DIFOaddbegin}{\DIFaddbegin} 
\LetLtxMacro{\DIFOaddend}{\DIFaddend} 
\LetLtxMacro{\DIFOdelbegin}{\DIFdelbegin} 
\LetLtxMacro{\DIFOdelend}{\DIFdelend} 
\DeclareRobustCommand{\DIFaddbegin}{\DIFOaddbegin \let\includegraphics\DIFaddincludegraphics} 
\DeclareRobustCommand{\DIFaddend}{\DIFOaddend \let\includegraphics\DIFOincludegraphics} 
\DeclareRobustCommand{\DIFdelbegin}{\DIFOdelbegin \let\includegraphics\DIFdelincludegraphics} 
\DeclareRobustCommand{\DIFdelend}{\DIFOaddend \let\includegraphics\DIFOincludegraphics} 
\LetLtxMacro{\DIFOaddbeginFL}{\DIFaddbeginFL} 
\LetLtxMacro{\DIFOaddendFL}{\DIFaddendFL} 
\LetLtxMacro{\DIFOdelbeginFL}{\DIFdelbeginFL} 
\LetLtxMacro{\DIFOdelendFL}{\DIFdelendFL} 
\DeclareRobustCommand{\DIFaddbeginFL}{\DIFOaddbeginFL \let\includegraphics\DIFaddincludegraphics} 
\DeclareRobustCommand{\DIFaddendFL}{\DIFOaddendFL \let\includegraphics\DIFOincludegraphics} 
\DeclareRobustCommand{\DIFdelbeginFL}{\DIFOdelbeginFL \let\includegraphics\DIFdelincludegraphics} 
\DeclareRobustCommand{\DIFdelendFL}{\DIFOaddendFL \let\includegraphics\DIFOincludegraphics} 
\begin{document}
\title{\LARGE  Packing edge-colorings of subcubic outerplanar graphs

}

\author{
Sijin Li\thanks{School of Mathematics and Physics, Xi'an Jiaotong-Liverpool University, Suzhou, Jiangsu Province, 215123, China, \texttt{sijin.li21@student.xjtlu.edu.cn}}
\and
Yifan Li\thanks{School of Mathematics and Physics, Xi'an Jiaotong-Liverpool University, Suzhou, Jiangsu Province, 215123, China, \texttt{yifan.li21@student.xjtlu.edu.cn}}
\and
Xujun Liu\thanks{Department of Foundational Mathematics, School of Mathematics and Physics, Xi'an Jiaotong-Liverpool University, Suzhou, Jiangsu Province, 215123, China, \texttt{xujun.liu@xjtlu.edu.cn}; the research of X. Liu was supported by the National Natural Science Foundation of China under grant No.~12401466 and the Research Development Fund RDF-21-02-066 of Xi'an Jiaotong-Liverpool University.}
}

\date{}

\maketitle

\begin{abstract}

\baselineskip 0.60cm

For a sequence $S = (s_1, s_2, \ldots, s_k)$ of non-decreasing positive integers, an $S$-packing edge-coloring ($S$-coloring) of a graph $G$ is a partition of $E(G)$ into $E_1, E_2, \ldots, E_k$ such that the distance between each pair of distinct edges $e_1,e_2 \in E_i$, $1 \le i \le k$, is at least $s_i + 1$. In particular, a $(1^{\ell},2^k)$-coloring is a partition of $E(G)$ into $\ell$ matchings and $k$ induced matchings, and it can be viewed as intermediate colorings between proper and strong edge-colorings. Hocquard, Lajou, and Lu\v zar conjectured that every subcubic planar graph has a $(1,2^6)$-coloring and a $(1^2,2^3)$-coloring. 

In this paper, we confirm the conjecture of Hocquard, Lajou, and Lu\v zar for subcubic outerplanar graphs by showing every subcubic outerplanar graph has a $(1,2^5)$-coloring and a $(1^2,2^3)$-coloring. Our results are best possible since we found subcubic outerplanar graphs with no $(1,2^4)$-coloring and no $(1^2,2^2)$-coloring respectively.  Furthermore, we explore the question ``What is the largest positive integer $k_1$ and $k_2$ such that every subcubic outerplanar graph is $(1,2^4,k_1)$-colorable and $(1^2,2^2,k_2)$-colorable?''. We prove $3 \le k_1 \le 6$ and $3 \le k_2 \le 4$. We also consider the question ``What is the largest positive integer $k_1'$ and $k_2'$ such that every $2$-connected subcubic outerplanar graph is $(1,2^3,k_1')$-colorable and $(1^2,2^2,k_2')$-colorable?''. We prove $k_1' = 2$ and $3 \le k_2' \le 11$.

\vspace{3mm}\noindent \emph{Keywords}: $S$-packing edge-coloring; outerplanar graph; subcubic graph; matching; induced matching  
\end{abstract}

\baselineskip 0.60cm

\section{Introduction}
For a non-decreasing sequence $S = (s_1, s_2, \ldots, s_k)$ of positive integers, an $S$-packing edge-coloring (abbreviated to $S$-coloring) of a graph $G$ is a partition of $E(G)$ into $E_1, E_2, \ldots, E_k$ such that for each pair of distinct edges $e_1,e_2 \in E_i$, $1 \le i \le k$, the distance between $e_1$ and $e_2$ is at least $s_i + 1$. This notion was first introduced in 2019 by Gastineau and Togni~\cite{GT2}. Its vertex counterpart has also received much attention recently (e.g. see~\cite{BKL, BGT, BKRW, FKL, GHT, GT1, KL}). To simplify the notation, we use exponents to denote repetitions of the same number, e.g., $(1,1,2,2,2,3)$ is denoted by $(1^2,2^3,3)$.

A proper $k$-edge coloring of $G$ is a partition of $E(G)$ into $k$ matchings. The chromatic index of a graph $G$ is the minimum $k$ such that $G$ has a proper $k$-edge coloring. The famous result of Vizing~\cite{V} states that every graph $G$ with $\Delta(G) = \Delta$ either has chromatic index $\Delta$ or $\Delta + 1$. The strong chromatic index $\chi_s'(G)$, introduced by Fouquet and Jolivet~\cite{FJ} in 1983, is the minimum $k$ such that $G$ can be partitioned into $k$ induced matchings. Erd\H{o}s and Ne\v set\v ril~\cite{EN} conjectured that $\chi_s'(G) \le \frac{5}{4} \Delta(G)^2$ when $\Delta(G)$ is even and $\chi_s'(G) \le \frac{5}{4} \Delta(G)^2 - \frac{1}{2} \Delta(G) + \frac{1}{4}$ when $\Delta(G)$ is odd. Using the language of $S$-packing edge-coloring, we know a strong edge-coloring using $k$ colors is equivalent to a $(2^k)$-packing edge-coloring. Many researchers have extended the study of strong edge-coloring (e.g.,~\cite{A, CDYZ, CKKR, FKS, HOV, HQT, HSY, HJK, KLRSWY, LMSS, MR}). In particular,  Andersen~\cite{A} and independently Hor\'ak, Qing, and Trotter~\cite{HQT} proved every subcubic graph has a $(2^{10})$-packing edge-coloring and thus confirming the conjecture of Erd\H{o}s and Ne\v set\v ril for $\Delta(G) = 3$. Kostochka, Li, Ruksasakchai, Santana, Wang, and Yu~\cite{KLRSWY} showed every subcubic planar graph is $(2^9)$-packing edge-colorable. For graphs with maximum degree at most four, the current best upper bound ($21$) was proved by Huang, Santana, and Yu~\cite{HSY}, which is only one color away from the conjectured upper bound of Erd\H{o}s and Ne\v set\v ril when $\Delta(G) = 4$. For large $\Delta(G)$, the current best upper bound, $1.772 \Delta(G)^2$, was proved by Hurley, de Joannis de Verclos, and Kang~\cite{HJK}.

The $(1^{\ell}, 2^k)$-packing edge-colorings (abbreviated to $(1^{\ell}, 2^k)$-colorings) can be viewed as intermediate colorings between proper edge-colorings and strong edge-colorings. Gastineau and Togni~\cite{GT2} asked whether every subcubic graph has a $(1,2^7)$-coloring. Hocquard, Lajou, and Lu\v zar~\cite{HLL} proved every subcubic graph has a $(1,2^8)$-coloring and conjectured the question of Gastineau and Togni is true. For colorings with two $1$-colors, Hocquard, Lajou, and Lu\v zar~\cite{HLL} showed every subcubic graph has a $(1^2,2^5)$-coloring. Furthermore, Hocquard, Lajou, and Lu\v zar~\cite{HLL}, as well as Gastineau and Togni~\cite{GT2}, conjectured that every subcubic graph is $(1^2,2^4)$-colorable. The former conjecture was recently confirmed by Liu, Santana, and Short~\cite{LSS}, who showed every subcubic multigraph is $(1,2^7)$-colorable. The latter conjecture (for connected subcubic graphs with more than 70 vertices) was proved by Liu and Yu~\cite{LY}. Both results are best possible since the graph obtained by subdividing exactly one edge of $K_{3,3}$ is neither $(1,2^6)$-colorable nor $(1^2,2^3)$-colorable. For subcubic planar graphs, Hocquard, Lajou, and Lu\v zar~\cite{HLL} posted the following conjecture.

\begin{conjecture}[Hocquard, Lajou, and Lu\v zar~\cite{HLL}]\label{main-conj}
 Every subcubic planar graph is $(1,2^6)$-packing edge-colorable and $(1^2,2^3)$-packing edge-colorable. 
\end{conjecture}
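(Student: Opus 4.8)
The plan is to attack both parts of Conjecture~\ref{main-conj} by the discharging method applied to a hypothetical minimal counterexample. Fix one of the two target palettes, say $(1,2^6)$, and let $G$ be a subcubic planar graph with $|E(G)|$ minimum among all those admitting no $(1,2^6)$-coloring; fix a plane embedding. Standard first reductions should show $G$ is connected and $\delta(G)\ge 2$: a vertex of degree at most $1$ together with its incident edge can be deleted, the remainder colored by minimality, and the single missing edge colored greedily since only a bounded number of colors are forbidden to it. One would similarly want to rule out bridges and small edge-cuts so as to reduce to the $2$-connected case, where every face is bounded by a cycle and the discharging bookkeeping is cleaner.

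First I would assemble a catalogue of \emph{reducible configurations}: local subgraphs -- typically short paths of degree-$2$ vertices, triangles and quadrilaterals carrying low-degree vertices, and prescribed adjacency patterns among degree-$3$ vertices -- none of which can occur in $G$. The reducibility proof for a configuration $H$ follows a fixed template. Delete or contract a constant-size piece of $H$ to obtain a graph $G'$ with fewer edges, invoke minimality to $(1,2^6)$-color $G'$, and then extend the coloring to $G$. The extension rests on a counting estimate: an edge $e=uv$ still needing a color is blocked from the single $1$-color only by the (at most four) edges sharing an endpoint with it, and from a given $2$-color only by edges lying within distance $2$ of $e$. In a subcubic graph the number of such distance-$\le 2$ edges is bounded, so one argues that the seven available colors always leave a legal choice, possibly after recoloring a few nearby edges to liberate a color.

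Second comes the discharging itself. Assign to each vertex $v$ the charge $d(v)-4$ and to each face $f$ the charge $\ell(f)-4$, where $\ell(f)$ is the length of $f$; by Euler's formula the total charge is $\sum_v(d(v)-4)+\sum_f(\ell(f)-4)=-8<0$. I would then design local redistribution rules -- for example, each face of length at least $5$ sends a fixed amount of charge to incident degree-$2$ vertices, while degree-$3$ vertices subsidize the short faces around them -- tuned so that, \emph{in the absence of every reducible configuration}, each vertex and each face finishes with nonnegative charge. This contradicts the negative total, so no minimal counterexample exists and every subcubic planar graph is $(1,2^6)$-colorable. The $(1^2,2^3)$ statement is handled by the identical scheme with a separate reducible catalogue adapted to two matching colors and three induced-matching colors.

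The hard part will be closing the gap between the configurations one can prove reducible and those the discharging rules actually need to exclude. For the $2$-colors the forbidden-color count around an edge is sensitive to the precise cycle and degree structure two steps away, so the extension arguments are intricate, and a single stubborn unavoidable configuration can force an extra color and break the exact bounds $2^6$ and $2^3$. Reconciling local reducibility with a globally consistent discharging scheme valid for \emph{all} plane embeddings -- rather than the tree-like outerplanar setting, where faces interact far more simply and an inductive decomposition suffices -- is precisely where the difficulty of this conjecture concentrates, and is why the full planar case remains beyond the outerplanar results established in this paper.
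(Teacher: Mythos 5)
The statement you set out to prove is labeled a \emph{conjecture} in the paper: it is attributed to Hocquard, Lajou, and Lu\v zar and is not proved there. The paper only confirms it in the special case of subcubic \emph{outerplanar} graphs (in a stronger form, $(1,2^5)$ and $(1^2,2^3)$), via a minimal-counterexample argument that exploits the tree structure of the weak dual: structural lemmas force pendant faces to be triangles or $4$-faces, and explicit cyclic color patterns are extended along the outer boundary. Your proposal is therefore being measured against an open problem, and as written it does not close it: it is a proof \emph{program} (catalogue of reducible configurations plus discharging), with none of the actual content supplied. No configuration is exhibited, no reducibility is verified, no discharging rules are stated or checked, and your own final paragraph concedes that the crux --- reconciling the provably reducible configurations with what the discharging needs --- is unresolved. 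A plan whose hard step is acknowledged as open is not a proof.

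Beyond the incompleteness, the one quantitative step you do commit to is wrong. You claim that since the number of edges within distance $2$ of an edge $e$ is bounded in a subcubic graph, ``the seven available colors always leave a legal choice.'' In a subcubic graph an edge $e=uv$ can have up to $4$ adjacent edges and up to $8$ further edges at distance exactly $2$, so up to $12$ edges can block the six $2$-colors and up to $4$ can block the $1$-color; greedy extension is thus nowhere near automatic, and the same arithmetic is worse for $(1^2,2^3)$. More decisively, your extension template uses only subcubicity, never planarity: if it worked as described, it would prove every subcubic graph is $(1,2^6)$- and $(1^2,2^3)$-colorable, contradicting the example recorded in this very paper (the graph obtained by subdividing one edge of $K_{3,3}$ admits neither coloring). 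So planarity must enter the argument in an essential, not merely bookkeeping, way --- precisely the part your discharging sketch leaves blank. The charge assignment $d(v)-4$, $\ell(f)-4$ with total $-8$ is a standard and reasonable starting point, but without a verified unavoidable-configuration/reducibility pairing it establishes nothing.
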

 
In this paper, we confirm Conjecture~\ref{main-conj} for subcubic outerplanar graphs by showing every subcubic outerplanar graph is $(1,2^5)$-colorable and $(1^2,2^3)$-colorable. Our results are best possible since we found subcubic outerplanar graphs with no $(1,2^4)$-coloring and no $(1^2,2^2)$-coloring respectively. Furthermore, we are interested in how tight our results can be in a different sense. We explore the question ``What is the largest positive integer $k_1$ and $k_2$ such that every subcubic outerplanar graph is $(1,2^4,k_1)$-colorable and $(1^2,2^2,k_2)$-colorable?''. We prove $3 \le k_1 \le 6$ and $3 \le k_2 \le 4$. We also consider the question ``What is the largest positive integer $k_1'$ and $k_2'$ such that every $2$-connected subcubic outerplanar graph is $(1,2^3,k_1')$-colorable and $(1^2,2^2,k_2')$-colorable?''. We prove $k_1' = 2$ and $3 \le k_2' \le 11$.

We work with outerplane graphs (outerplanar graphs with a fixed drawing where all vertices
are on the outer face). A block in a graph is an inclusion maximal subgraph with no cut vertices. By definition,
each block is either $2$-connected (non-trivial) or a $K_2$ (trivial). Given an outerplane graph G, the weak dual graph, $\tau(G)$, is the graph that has a vertex for every bounded face of the embedding, and an edge for every pair of bounded faces sharing at least one edge. It is well-known that a plane graph is outerplane if and only if its weak dual is a forest. We say a vertex $u$ sees color $c$ if color $c$ is used in an edge incident to $u$. A pattern $[a,b,c]$ is used to denote the use of colors $a,b,c,a,b,c, \ldots$. In our proof, we assume $G$ is connected since otherwise we apply the same argument to each connected component.

We first show in Section 2.1 that $k_1' = 2$. Then we show in Section 2.2 that $3 \le k_1 \le 6$ and hence the conjecture of Hocquard et al (one $1$-color) for subcubic outerplanar graphs is confirmed as a corollary. In Section 3.1, we prove that $3 \le k_2 \le 4$. This result implies the conjecture of Hocquard et al (two $1$-colors) is true for subcubic outerplanar graphs. At last, we show in Section 3.2 that $3 \le k_2' \le 11$ and propose three open problems in Section 4.


\section{$(1^1,2^k)$-packing edge-colorings of subcubic outerplanar graphs}

In this section, we focus on $(1^1, 2^k)$-packing edge-colorings of subcubic outerplanar graphs. We first show in Section~\ref{12222} that every $2$-connected subcubic outerplanar graph has a $(1,2^4)$-packing edge-coloring. 

\begin{theorem}\label{thm-12222}
Every $2$-connected subcubic outerplanar graph has a $(1,2^4)$-packing edge-coloring.    
\end{theorem}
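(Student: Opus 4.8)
The plan is to argue by induction on the number of edges of $G$, using the fact (stated in the introduction) that the weak dual $\tau(G)$ of an outerplane graph is a forest, so for a $2$-connected $G$ it is a tree. First I would dispose of the base case where $G$ is a single cycle $C_n$, i.e. $\tau(G)$ is a single vertex: here a $(1,2^4)$-coloring can be written down explicitly by repeating a short pattern (for instance alternating the matching color $1$ with the induced-matching colors $2,3$ as in $[1,2,1,3]$, which works when $n\equiv 0\pmod 4$) and fixing the ``seam'' left when $n$ is not a multiple of the period by spending colors $4,5$ there; the handful of small values of $n$ are checked by hand. This is routine because five colors leave ample room on a $2$-regular graph.

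For the inductive step I would use that when $\tau(G)$ has at least two vertices it has a leaf face $f$. Such a face shares exactly one edge $xy$ with the rest of $G$, while the remaining boundary of $f$ is a path $P\colon x\,w_1\,w_2\cdots w_t\,y$ ($t\ge 1$) lying on the outer face. Crucially, $w_1,\dots,w_t$ have degree $2$, whereas $x$ and $y$ have degree $3$: each of $x,y$ is incident to $xy$, to one path edge, and to one further edge (call them $xx'$ and $yy'$) belonging to the neighbouring bounded face across $xy$. I would then set $G' = G - \{w_1,\dots,w_t\}$. Since $x$ and $y$ remain joined by $xy$ and drop only to degree $2$, the graph $G'$ is again a $2$-connected subcubic outerplane graph with strictly fewer edges, so by the induction hypothesis it admits a $(1,2^4)$-coloring $\varphi$. (When $G'$ has shrunk to a cycle, the base case applies instead.)

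The heart of the argument, and the step I expect to be the main obstacle, is extending $\varphi$ to the path edges $f_0=xw_1,\ f_1=w_1w_2,\ \dots,\ f_t=w_ty$. The colors forced near the attachment points are few: $f_0$ is constrained only by $\varphi(xy)$, $\varphi(xx')$ and, for the induced colors, by the edges at distance $2$ reached through $x'$ and through $y$, and symmetrically for $f_t$; the interior degree-$2$ vertices impose purely local constraints. When $t$ is large I would simply run a periodic coloring along $P$ respecting the matching/induced-matching distance requirements internally while slotting compatible colors at the two ends. The difficulty is concentrated in the short faces, above all the triangle $t=1$, where $f_0$ and $f_1$ are mutually adjacent and each is adjacent to $xy$, so three pairwise-adjacent edges must get distinct colors while $f_0$ avoids $\varphi(xx')$, $f_1$ avoids $\varphi(yy')$, and the induced-matching constraints toward $x'$ and $y'$ are also met.

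In these tight cases I would carry out a finite case analysis on the multiset $\{\varphi(xy),\varphi(xx'),\varphi(yy')\}$ and on whether the matching color $1$ is free at each end. The genuinely dangerous situation is when color $1$ is blocked at both ends at once; there I would recolor a bounded number of edges of $G'$ incident to $x$ or $y$ (for instance swapping color $1$ with a suitable induced color on $xx'$ or $yy'$) to free a usable color before coloring $P$. Verifying that such a local repair is always available --- that one never gets stuck on a triangle or short path flanked by two degree-$3$ vertices --- is the \emph{crux}; everything else is bookkeeping with five colors. I expect the $2$-connectivity hypothesis to be exactly what powers this repair, since it guarantees $\deg(x)=\deg(y)=3$ and hence the presence of the flanking edges $xx'$ and $yy'$ that give the extra room, which is consistent with the theorem being stated only for $2$-connected graphs.
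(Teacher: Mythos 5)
Your overall architecture (induct along the weak-dual tree, peel off a leaf face, handle the pure cycle as the base case) matches the opening moves of the paper's proof, and your base case is fine. The gap is exactly where you locate it: the extension across a short pendant face, above all a pendant triangle, is not carried out, and it is not a routine ``bounded local repair.'' Concretely, for $t=1$ the edge $xw_1$ must avoid $\varphi(xy)$ and $\varphi(xx')$ outright and, for the four induced-matching colors, also the $2$-colors on $yy'$ and on the two further edges at $x'$; with $\varphi(xx')=1$ and four distinct $2$-colors among the remaining constraints, \emph{all five} colors are forbidden. Your proposed fix --- swap $1$ with a $2$-color on $xx'$ or $yy'$ --- need not be available, because the paper's own gadgets show that $(1,2^4)$-colorings of $2$-connected subcubic outerplanar graphs can be rigid over long distances: the house graph $G_0$ has only two colorings up to symmetry (Claim~\ref{unique12222}), and two glued houses ($G_2$) have a \emph{unique} coloring, so the colors sitting on $xx'$, $yy'$ and their neighbourhoods can be globally forced and no recoloring of a bounded number of edges exists. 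Declaring this step ``bookkeeping'' is where the proof is missing.

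What the paper does to get around this is structurally different in three ways. First, it proves a \emph{stronger} statement (Theorem~\ref{12222-extended}): every house subgraph must receive the Type~I coloring. This strengthened inductive hypothesis is what controls which colors can appear at the attachment edges, replacing your ad hoc repair. Second, when it removes a pendant $k$-face ($k\ge 4$) or a deeper layer, it does not merely delete vertices; it substitutes a gadget (a new triangle in Lemma~\ref{lem:pendant_is_triangle}, a whole house in Lemma~\ref{lem12222:outside_two_layer} and the final step) so that the coloring of $G'$ handed back by minimality already reserves a compatible pattern at $v_1,v_k$. Third, it never reduces a pendant triangle at all: it shows every pendant face of a minimal counterexample \emph{is} a triangle and then peels the outermost two or three layers of the weak-dual tree simultaneously, precisely because a single-layer reduction of a triangle is the case that cannot be fixed locally. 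To complete your argument you would need either an analogous strengthening of the inductive statement or a gadget substitution that pre-forces usable colors at $x$ and $y$.
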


Our result is the best possible since there are $2$-connected subcubic outerplanar graphs with no $(2^5)$-packing edge-coloring and no $(1,2^3,3)$-packing edge-coloring respectively (Examples~\ref{example_22222} and~\ref{example_12223}). Furthermore, we provide in Example~\ref{no12222} a subcubic outerplanar graph that has no $(1,2^4)$-packing edge-coloring, showing the ``$2$-connected'' condition in Theorem~\ref{thm-12222} also cannot be dropped.

\begin{example}\label{example_22222}
The graph $G_1$ is obtained from a five cycle $u_1u_2u_3u_4u_5$ plus an edge $u_2u_5$ (See Fig.~\ref{example_1} left picture). The graph $G_1$ is $2$-connected, subcubic, outerplanar, and has $6$ edges. Since every pair of edges in $G_1$ has distance at most two, $G_1$ is not $(2^5)$-packing edge-colorable.    
\end{example}

\begin{figure}[ht]
 \vspace{-3mm}
\begin{center}
  \includegraphics[scale=0.6]{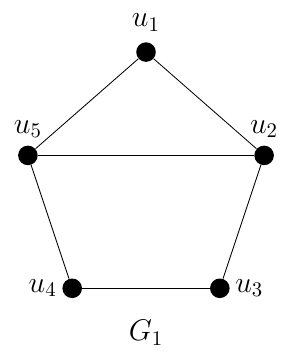} \hspace{5mm}
  \includegraphics[scale=0.65]{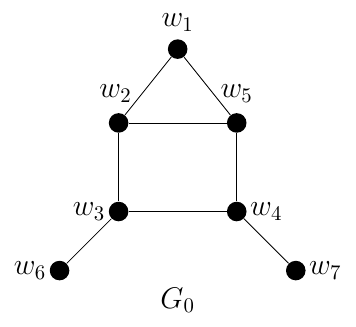}
  \includegraphics[scale=0.6]{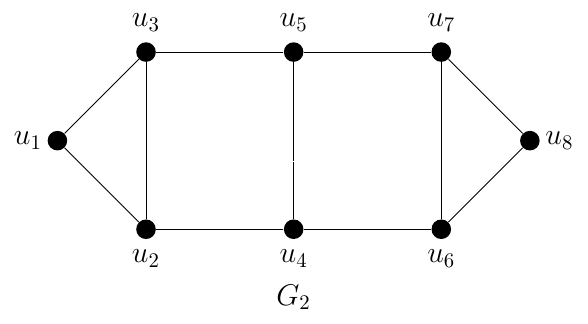}
 \vspace{-3mm}
\caption{$2$-connected subcubic outerplanar graphs with no $(2^5)$-coloring and no $(1,2^3,3)$-coloring.}\label{example_1}
\end{center}
\vspace{-8mm}
\end{figure}

\begin{defn}
Let $G_0$ in Fig.~\ref{example_1} be called the \textbf{house} graph and $G_0-\{w_6,w_7\}$ be called the \textbf{house-without-base} graph.

\end{defn}

\begin{example}\label{example_12223}
The graph $G_2$ in Fig.~\ref{example_1} is 2-connected, subcubic, and outerplanar, but not $(1,2^3,3)$-colorable.   
\end{example}

\begin{proof}
We first show $G_2$ has a unique $(1,2^4)$-coloring up to symmetry. 


\begin{claim}\label{unique12222}
The gadget $G_0$ has two $(1,2^4)$-colorings and $G_2$ has a unique $(1,2^4)$-coloring up to symmetry.
\end{claim}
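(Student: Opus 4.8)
The plan is to prove both statements by a structured, symmetry-reduced case analysis driven by the scarcity of colors. Throughout I would use the basic dictionary: in a $(1,2^4)$-coloring the single $1$-color forms a matching, while each of the four $2$-colors forms an induced matching. Consequently, any two edges at distance at most $2$ must receive distinct colors, with the one exception that two edges carrying the $1$-color are allowed as long as they are non-adjacent. The whole argument is organized around locating the (few) edges that may carry the $1$-color, since that color is the only ``cheap'' one.

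First, for the house $G_0$. I would fix notation for its edges and start from its densest local structure, the roof triangle on $w_1,w_2,w_3$: its three pairwise-adjacent edges need three distinct colors, and because the $1$-color is a matching at most one of them can take it. I would then propagate outward. Each edge incident to the triangle lies within distance $2$ of all three roof edges, which severely restricts the colors still available to it, and similarly the wall and base edges are tightly constrained by their distance-$2$ neighborhoods. Using the left–right reflection automorphism of the house to halve the casework, I expect to force the $1$-color (up to symmetry) onto one specific edge, after which the remaining edges admit essentially one strong-coloring pattern up to permuting the four $2$-colors — except for a single binary choice (I expect this to sit at or near the base edge $w_6w_7$) that is not realized by any automorphism and hence yields precisely the two announced colorings.

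Second, for $G_2$. The key reduction is that $G_2$ is two copies of $G_0$ glued along the base (as in Fig.~\ref{example_1}), and that the restriction of any $(1,2^4)$-coloring of $G_2$ to a subgraph is again a valid $(1,2^4)$-coloring: passing to a subgraph can only increase distances between edges, so every same-colored pair still satisfies its distance requirement. Hence each house-copy must be colored by one of the two colorings found in the first part, leaving only a handful of combinations to examine. I would then invoke the cross-copy distance constraints near the shared base — edges of one copy lying within distance $2$ of edges of the other copy — together with the symmetry group of $G_2$ (swapping the two copies, plus the left–right reflection) to eliminate every combination but one, giving the claimed unique coloring up to symmetry and relabeling of the $2$-colors.

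The main obstacle is making the $G_0$ case analysis genuinely exhaustive while keeping it short, precisely because the $2$-color constraints are non-local: an edge two steps away is blocked even when it is not adjacent. So for each candidate placement of the $1$-color I must track the full distance-$2$ neighborhood of every edge, not merely its incident edges, and I must be careful that the ``two colorings'' are counted under the correct equivalence (graph automorphisms together with permutations of the four $2$-colors). Once $G_0$ is pinned down correctly, the $G_2$ step is largely bookkeeping; its one delicate point is verifying that a color forced near the base in one copy is incompatible with a color forced near the base in the other, which is exactly the interaction that collapses the extra combinations down to a single coloring.
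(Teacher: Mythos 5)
Your approach is essentially the paper's: both arguments locate the color $1$ by a distance-$2$ case analysis on the dense core of the house (using that the triangle needs three distinct colors and that $1$ is the only repeatable color nearby), and both handle $G_2$ by restricting the coloring to the two house-copies and checking which of the resulting combinations are compatible. One correction to your anticipated outcome, though it does not affect the soundness of the method: $G_0$ has no edge $w_6w_7$ (the triangle is $w_1w_2w_5$, and $w_6,w_7$ are pendant neighbours of $w_3,w_4$), and the two colorings of $G_0$ differ not in a residual binary choice near the base but in the placement of color $1$ itself --- since all six edges of the subgraph induced by $\{w_1,\dots,w_5\}$ are pairwise within distance two, color $1$ must appear on two non-adjacent edges of that core, and after ruling out $w_1w_2$ and $w_1w_5$ the only candidate pairs are $\{w_2w_5,w_3w_4\}$ and $\{w_2w_3,w_4w_5\}$, which are exactly the Type I and Type II colorings.
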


\begin{proof}
Suppose $G_0$ has a $(1,2^4)$-coloring $f$. We first show $f(w_1w_2) \neq 1$. If not, i.e., $f(w_1w_2) = 1$, then it implies $f(w_1w_5), f(w_2w_5), f(w_2w_3) \in \{2_a, 2_b, 2_c, 2_d\}$, say $f(w_1w_5) = 2_a, f(w_2w_5) = 2_b, f(w_2w_3) = 2_c$. Since each of $w_3w_4, w_4w_5, w_4w_7$ is at distance at most two from each of $w_1w_5, w_2w_5, w_2w_3$, the only available colors for $w_3w_4, w_4w_5, $ $w_4w_7$ are $\{1,2_d\}$, which is impossible. By symmetry, $f(w_1w_5) \neq 1$ as well.

Let $G_0'$ be the subgraph induced by vertices $\{w_1, w_2, w_3, w_4, w_5\}$. Since each pair of edges in $G_0'$ has distance at most two, $f$ needs to use $1$ at least twice. Since $f(w_1w_2) \neq 1$ and $f(w_1w_5) \neq 1$, the only two $(1,2^4)$-colorings of $G_0$, up to symmetry, are Type I coloring: $f_1(w_2w_5) = f_1(w_3w_4) = 1$, $f_1(w_1w_2) = f_1(w_4w_7) = 2_a$, $f_1(w_1w_5) = f_1(w_3w_6) = 2_b$, $f_1(w_2w_3) = 2_c$, $f_1(w_4w_5) = 2_d$, and Type II coloring: $f_2(w_2w_3) = f_2(w_4w_5) = 1$, $f_2(w_1w_2) = f_2(w_4w_7) = 2_a$, $f_2(w_1w_5) = f_2(w_3w_6) = 2_b$, $f_2(w_2w_5) = 2_c$, $f_2(w_3w_4) = 2_d$ (see Fig.~\ref{determinedcoloring}).

Since $G_2$ contains two copies of gadget $G_0$, they must be both of Type I. Therefore, the unique $(1,2^4)$-coloring $g$ of $G_2$, up to symmetry, must satisfy $g(u_2u_3) = g(u_4u_5) = g(u_6u_7) = 1$, $g(u_1u_2) = g(u_5u_7) = 2_a$, $g(u_1u_3) = g(u_4u_6) = 2_b$, $g(u_2u_4) = g(u_7u_8) = 2_c$, and $g(u_3u_5) = g(u_6u_8) = 2_d$. 
\end{proof}

Suppose $G_2$ has a $(1,2^3,3)$-coloring $h$. The $(1,2^4)$-coloring $h'$ obtained by changing color $3$ of $h$ to $2_d$ is a valid $(1,2^4)$-coloring of $G_2$. By Claim~\ref{unique12222}, $h'$ must be isomorphic to $g$. However, the two edges colored with $2_d$ are at distance exactly three, and it contradicts the fact that $h$ is a $(1,2^3,3)$-coloring.       
\end{proof}

\begin{rem}\label{twohouses}
The graph $G_2$ (Fig.~\ref{determinedcoloring}) has a unique $(1,2^4)$-coloring, showed in the proof of Claim~\ref{unique12222}. Let the graph $G_2'$ be obtained from $G_2$ by adding a vertex $u$ and an edge $uu_1$. Then $uu_1$ is colored with $1$ in every $(1,2^4)$-coloring of $G_2'$. 
\end{rem}

\begin{figure}[ht]
 \vspace{-5mm}
\begin{center}
  \includegraphics[scale=0.7]{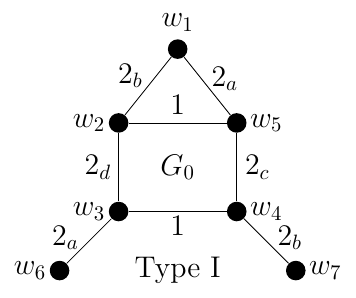} 
  \includegraphics[scale=0.7]{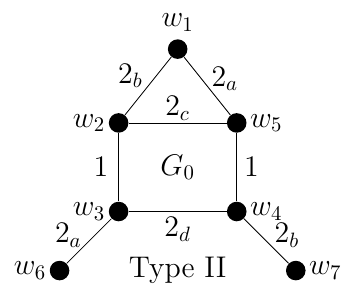}
  \includegraphics[scale=0.6]{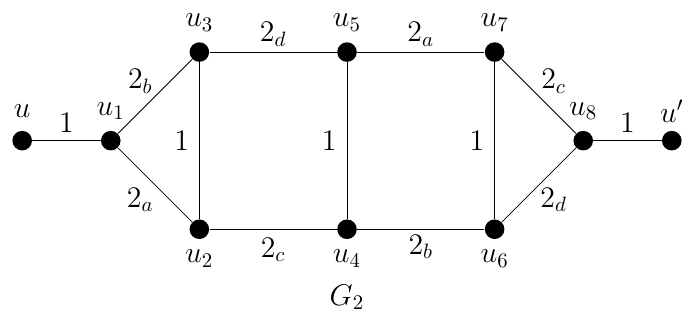}
 \vspace{-3mm}
\caption{Determined $(1,2^4)$-colorings for $G_0$ and $G_2$.}\label{determinedcoloring}
\end{center}
\vspace{-8mm}
\end{figure}

\begin{example}\label{no12222}
The graph $G_3$ (Fig.~\ref{example_3}) is obtained by taking a copy of $G_0$, a copy of $G_2$, and then identifying the vertex $w_6$ of $G_0$ with the vertex $u_1$ of $G_2$. We show that $G_3$ is subcubic, outerplanar, and has no $(1,2^4)$-coloring.
\end{example}

\begin{figure}[ht]
 \vspace{-3mm}
\begin{center}
  \includegraphics[scale=0.6]{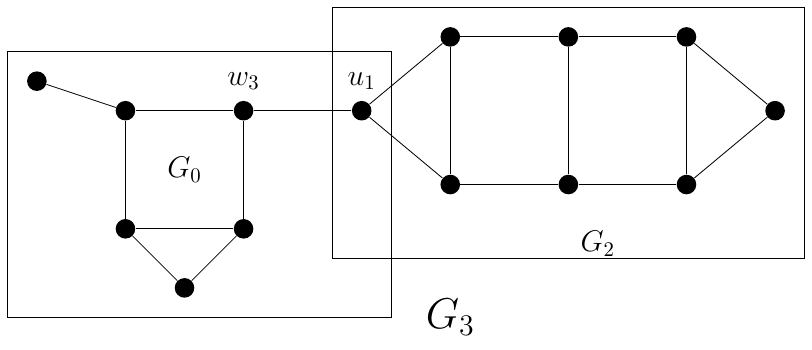} 
   \vspace{-3mm}
\caption{A subcubic outerplanar graph with no $(1,2^4)$-coloring.}\label{example_3}
\end{center}
\vspace{-8mm}
\end{figure}

\begin{proof}
By Claim~\ref{unique12222}, the edge $w_3u_1$ of $G_0$ must use a color from $\{2_a, 2_b, 2_c, 2_d\}$. However, by Remark~\ref{twohouses}, the edge $w_3u_1$ must use color $1$. This is a contradiction.   
\end{proof}



Inspired by Conjecture~\ref{main-conj}, we explore the question ``what is the largest positive integer $k_1$ such that every subcubic outerplanar graph is $(1,2^4,k_1)$-colorable?''. We prove in Section~\ref{12222k} that $k_1 \ge 3$ and $k_1 \le 6$. 

\begin{theorem}
Every subcubic outerplanar graph is $(1,2^4,3)$-packing edge-colorable.  
\end{theorem}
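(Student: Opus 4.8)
The plan is to argue by minimal counterexample and to treat the extra color as a reserve spent only at cut vertices. Let $G$ be a subcubic outerplanar graph with the fewest edges that admits no $(1,2^4,3)$-coloring. The first observation is that $G$ cannot be $2$-connected: by Theorem~\ref{thm-12222} a $2$-connected subcubic outerplanar graph has a $(1,2^4)$-coloring, and every $(1,2^4)$-coloring is in particular a $(1,2^4,3)$-coloring (just leave the sixth color unused). Hence $G$ has a cut vertex $v$, and we may write $G = G_1 \cup G_2$ with $G_1, G_2$ connected, meeting only in $v$, each with strictly fewer edges than $G$. To keep the interface small I would take $G_1$ to be a \emph{leaf block} of $G$ together with $v$, so that $\deg_{G_1}(v) \in \{1,2\}$ and $\deg_{G_2}(v) \le 2$; since $G$ is subcubic, at most three edges meet at $v$ and they split between the two sides in a very restricted way.

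By minimality both $G_1$ and $G_2$ admit $(1,2^4,3)$-colorings $f_1, f_2$ (when $G_1$ is a $2$-connected block one may instead take a $(1,2^4)$-coloring from Theorem~\ref{thm-12222}). The only obstruction to gluing $f_1$ and $f_2$ into a coloring of $G$ comes from edges close to $v$: any two edges at distance at most $3$ whose connecting path crosses $v$ must each lie within distance $3$ of $v$, so only the colors appearing within distance $3$ of $v$ in each part can conflict across $v$. The merge step is to relabel colors so that these bounded local neighborhoods become compatible. Here there are two kinds of freedom: the four strong colors $2_a,2_b,2_c,2_d$ may be permuted independently in $f_1$ (an $S_4$-symmetry that preserves validity), and the reserve color $3$ — unused, or used only far away, in the generic case — can be placed on one of the at most three edges incident to $v$ to break any residual conflict, precisely because a color-$3$ edge only has to avoid other color-$3$ edges within distance $3$.

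The heart of the argument, and the step I expect to be the main obstacle, is proving that this local relabelling always succeeds. One must enumerate how the (at most three) edges at $v$ distribute between $G_1$ and $G_2$, and for each distribution check that the colors forced within distance $2$ (for the four strong classes) and within distance $3$ (for the reserve class) still leave an admissible choice after permuting the strong colors on one side. The subtle point is that recoloring an edge at $v$ with color $3$, or permuting strong colors, must not create a new violation \emph{on the far side} of $v$, where color-$3$ or strong-colored edges may already sit within distance $3$; this is exactly the phenomenon witnessed by the gadget $G_3$ of Example~\ref{no12222}, where a pure $(1,2^4)$-coloring is impossible and color $3$ must be spent at the identification vertex.

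To make these local checks manageable I would strengthen the induction to a \emph{rooted} statement: prove that every subcubic outerplanar graph with a prescribed vertex $v$ of degree at most $2$ has a $(1,2^4,3)$-coloring satisfying a short list of normalizing conditions near $v$ (for instance, reserving color $3$ for use only in a bounded neighborhood of $v$, and fixing the color pattern on the edges incident to $v$). Carrying such an invariant through the two leaf-block reductions — the pendant-edge case, where $v$ has a degree-$1$ neighbor and one extends a single edge by local recoloring, and the $2$-connected-block case, where $f_1$ comes from Theorem~\ref{thm-12222} and must be permuted to agree with $f_2$ — should collapse the compatibility verification to a finite, if tedious, case analysis driven entirely by the local picture around $v$.
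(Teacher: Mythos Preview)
Your overall architecture is reasonable and shares its spirit with the paper: reduce to a strengthened inductive statement carrying local control near a designated vertex. But as written there is a genuine gap at the gluing step, and it is not merely a matter of patience with cases.

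The concrete obstruction is that neither of the two colorings you invoke comes with the control you need at $v$. From Theorem~\ref{thm-12222} you get \emph{some} $(1,2^4)$-coloring of the $2$-connected leaf block $G_1$, but the $S_4$-symmetry only permutes the four $2$-colors; it does nothing to the placement of color~$1$. There are $2$-connected subcubic outerplanar graphs in which every $(1,2^4)$-coloring puts $1$ on a prescribed edge at a given degree-$2$ vertex (Claim~\ref{unique12222} and Remark~\ref{twohouses} exhibit exactly this rigidity). On the other side, the $(1,2^4,3)$-coloring $f_2$ of $G_2$ obtained by minimality may very well use colour~$3$ within distance~$3$ of $v$ and colour~$1$ on the edge at $v$. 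When both rigidities occur simultaneously --- $f_1$ forces $1$ at $v$, $f_2$ forces $1$ at $v$, and $f_2$ has already spent $3$ nearby --- the permutation-plus-reserve mechanism has no move left. Your last paragraph correctly diagnoses that a rooted strengthening is required, but the entire content of the theorem lies in \emph{finding} normalizing conditions that are simultaneously (i) strong enough to make the glue work and (ii) weak enough to be reprovable after each reduction; you have not proposed candidates for these conditions, and the rigidity examples above show they cannot be trivial.

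For comparison, the paper does not split at cut vertices and glue. It first pads $2$-vertices to leaves, then proves a strengthened statement (a ``good coloring'') whose invariants are structural rather than positional: every house-with-chimney subgraph is coloured by one of three fixed templates, and the neighbour of every leaf does not see colour~$3$. The reductions are face-by-face inside a leaf block (pendant faces forced to be small, the second-outermost face forced to be a $4$-face), replacing an outer layer by an HC gadget whose template colouring is known, and then extending explicitly. The good-coloring invariants are exactly what make this extension go through; they were engineered for the face-peeling reduction, not for a block-level merge. If you want to pursue your block-splitting route, you would need an analogous invariant \emph{and} a rooted refinement of Theorem~\ref{thm-12222} controlling colours near a designated boundary vertex of a $2$-connected piece --- neither of which follows from what you have cited.
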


As a corollary of our result, every subcubic outerplanar graph is $(1,2^5)$-colorable, and thus we confirmed the conjecture (one $1$-color) of Hocquard et al~\cite{HLL} for subcubic outerplanar graphs. 

\begin{cor}\label{cor-1}
Every subcubic outerplanar graph is $(1,2^5)$-packing edge-colorable.   
\end{cor}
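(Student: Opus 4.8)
The plan is to derive Corollary~\ref{cor-1} immediately from the preceding theorem by exploiting the fact that the distance constraints defining an $S$-coloring are monotone in each coordinate. Concretely, I would start from a $(1,2^4,3)$-coloring of the given subcubic outerplanar graph $G$, whose existence is guaranteed by the theorem, and write its color classes as $E_1, E_2, E_3, E_4, E_5, E_6$, where $E_1$ is a matching (pairwise distance at least $2$), each of $E_2, \ldots, E_5$ is an induced matching (pairwise distance at least $3$), and $E_6$ satisfies the stronger requirement that any two of its edges are at distance at least $4$.

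The key step is to observe that this sixth class automatically satisfies the weaker requirement imposed on a $2$-color: any two distinct edges of $E_6$ lie at distance at least $4 \ge 3 = 2+1$, so $E_6$ is in particular an induced matching. Hence the very same partition $E_1, E_2, E_3, E_4, E_5, E_6$, now read against the sequence $(1,2,2,2,2,2) = (1,2^5)$, meets every distance constraint of a $(1,2^5)$-coloring, which is exactly what we want.

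There is essentially no obstacle here: the entire content is the elementary remark that a class whose edges are pairwise at distance at least $4$ is \emph{a fortiori} a class whose edges are pairwise at distance at least $3$, i.e.\ that the sequence $(1,2^4,3)$ dominates $(1,2^5)$ coordinatewise. I would therefore present the argument in a single short paragraph, with the only care being to state the distance inequality $4 \ge 3$ explicitly so that the relaxation of the last color from a $3$-color to a $2$-color is transparent.
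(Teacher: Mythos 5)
Your proposal is correct and is exactly the paper's route: Corollary~\ref{cor-1} is obtained there as an immediate consequence of the $(1,2^4,3)$-colorability theorem, using the same observation that a color class with pairwise edge-distance at least $4$ satisfies \emph{a fortiori} the distance-at-least-$3$ requirement of a $2$-color. Nothing further is needed.
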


Corollary~\ref{cor-1} is also sharp in the sense that there are subcubic outerplanar graphs with no $(1,2^4)$-coloring (Example~\ref{no12222}). Note that Corollary~\ref{cor-1} can also be implied by a result of Hocquard, Ochem, and Valicov~\cite{HOV}, who showed every subcubic outerplanar graph is $(2^6)$-colorable.

\subsection{$(1,2^4)$-packing edge-coloring of $2$-connected subcubic outerplanar graphs}\label{12222}


In this subsection, we prove Theorem~\ref{12222-extended}, where we require all the ``house graph'' must be using Type I coloring (shown in Fig.~\ref{determinedcoloring}). Theorem~\ref{12222} is true as a corollary of Theorem~\ref{12222-extended}.

\begin{theorem}\label{12222-extended}
Every $2$-connected subcubic outerplanar graph has a $(1,2^4)$-packing edge-coloring such that all the ``house graph'' is colored with Type I coloring (we call such a coloring {\em good coloring} in this subsection).   
\end{theorem}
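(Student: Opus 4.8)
The plan is to prove Theorem~\ref{12222-extended} by structural induction on a $2$-connected subcubic outerplane graph $G$, exploiting the fact that its weak dual $\tau(G)$ is a tree (since $G$ is outerplanar) and, more importantly, a path or at least has leaves. The key observation is that a leaf in $\tau(G)$ corresponds to a bounded face $F$ of $G$ that shares exactly one edge with the rest of $G$; this edge is the unique ``attaching edge'' $e = xy$. Because $G$ is $2$-connected and subcubic, the face $F$ is bounded by a cycle, and all but two of its vertices (namely $x$ and $y$) have degree $2$ in $G$. I would peel off such a leaf face, obtain a smaller $2$-connected subcubic outerplanar graph $G'$ (after suitably contracting or replacing $F$ by the edge $e$, or by deleting the degree-$2$ path and keeping $e$), apply the induction hypothesis to get a good coloring of $G'$, and then extend the coloring across $F$. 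The houses appearing in $G$ (the $G_0$ gadgets of Example~\ref{example_12223}) must end up Type~I; the induction hypothesis already guarantees this for houses inside $G'$, so the main work is to ensure that any house created or completed by re-attaching $F$ is also colored Type~I.

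The heart of the argument is a careful case analysis on the length of the leaf face $F$ and on the colors already seen by the attaching vertices $x$ and $y$ in the inductively colored graph $G'$. First I would record, for the attaching edge $e = xy$, exactly which of the five colors $\{1, 2_a, 2_b, 2_c, 2_d\}$ are forbidden on $e$ and on the first/last edges of the path around $F$, where the constraints come from edges incident to $x$ and $y$ and from edges within distance two across $e$. Since $G$ is subcubic, each of $x, y$ is incident to at most two other edges, which bounds how many colors are ``blocked'' near the attachment. The key counting fact is that with five colors available and only a constant number of nearby edges to avoid, there is always enough room to color the degree-$2$ chain of $F$ using the pattern notation $[2_a, 2_b, 2_c, 2_d]$ interspersed with the color $1$ to break the distance-$3$ obstructions that sink a pure strong coloring. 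When $F$ is long, a periodic pattern with period dividing the available color set works; when $F$ is short (a triangle, a $4$-face, or a $5$-face possibly forming a house), I would appeal directly to the explicit Type~I / Type~II colorings in Claim~\ref{unique12222} and Fig.~\ref{determinedcoloring}, choosing Type~I and checking compatibility with the colors inherited from $G'$.

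To set up the induction cleanly I would first dispose of the base case, where $\tau(G)$ is a single vertex, i.e.\ $G$ is itself a single cycle $C_n$ (the unique $2$-connected outerplane graph whose weak dual is trivial); here a direct periodic coloring using $1$ together with the four $2$-colors suffices, with small cases $n \le 6$ handled by hand, and none of these contain a house unless $n = 5$ with a chord, which is excluded in a single cycle. For the inductive step I would need a reduction lemma stating that removing a leaf face and replacing it by its attaching edge preserves $2$-connectivity, subcubicity, and outerplanarity, so that the induction hypothesis applies; this is straightforward from the tree structure of $\tau(G)$ but must be stated so that the house gadgets are tracked correctly, because a house spans two adjacent faces (a $4$-cycle and a triangle sharing an edge, per $G_0$) and may straddle the boundary between $F$ and $G'$.

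The hard part will be the re-attachment step in exactly those configurations where $F$ together with the already-colored neighborhood in $G'$ forms (or completes) a house $G_0$: here I am not free to color $F$ arbitrarily, because Claim~\ref{unique12222} shows a house admits only the two rigid colorings Type~I and Type~II, and I must force Type~I while respecting the colors already fixed on the shared edges by induction. The danger is that the inherited colors on the shared edge force Type~II, or force a distance-$3$ conflict on a $2_d$-pair as in the proof of Example~\ref{example_12223}; I expect to handle this by strengthening the induction hypothesis further (as the theorem already does, by demanding a \emph{good} coloring) and possibly by also recording the color class of the attaching edge $e$, so that when two houses are glued (as in $G_2$) the forced Type~I colorings on both sides are automatically consistent. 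Carefully propagating this extra invariant through every case of the face-length analysis is the main obstacle, and it is precisely what makes the ``good coloring'' formulation, rather than a bare $(1,2^4)$-coloring, necessary for the induction to close.
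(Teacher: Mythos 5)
There is a genuine gap, and it sits exactly where you put your ``key counting fact.'' Your reduction replaces a leaf face $F$ (boundary $v_1v_2\cdots v_k$, attaching edge $v_1v_k$) by its attaching edge alone, and you then claim that five colors against ``a constant number of nearby edges'' always leaves room. That count fails: the first peeled edge $v_1v_2$ must avoid, for the $2$-colors, every precolored edge incident to $u_1$ (the third neighbour of $v_1$), to $v_1$, and to $v_k$ --- up to five edges --- and for color $1$ it must avoid the edges adjacent to it at $v_1$. A good coloring of $G'$ can legitimately put $1,2_a,2_b$ on the three edges at $u_1$, $2_c$ on $v_1v_k$, and $2_d$ on $v_ku_k$, leaving no color at all for $v_1v_2$. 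The two distance conditions (matching versus induced matching) interact, so no uniform pigeonhole closes this. The paper's proof avoids the trap by never reducing a face to its bare attaching edge: a long pendant face is replaced by a \emph{triangle} (a new vertex $v$ with $vv_1,vv_k$), and deeper faces are replaced by an entire house gadget, precisely so that the colors the induction places on the replacement edges $vv_1, vv_k$ (or on the gadget) \emph{certify} which colors remain available at $v_1$ and $v_k$ when the original face is restored. Without that certification your extension step is not just harder --- it is false in some configurations.

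The second divergence is structural. You peel one dual leaf at a time; the paper peels two to three layers of the weak dual at once. After showing every pendant face is a triangle and the second-layer face is a $4$-face (so the outer fringe consists of houses, whose colorings are rigid by Claim~\ref{unique12222}), the paper takes a longest path $F_1F_2F_3F_4\ldots$ in the weak dual, deletes the third-layer face $F_3$ together with \emph{all} houses and triangles attached to it, substitutes a single house gadget on $V(F_3)\cap V(F_4)$, and extends with a case analysis on $|F_3| \bmod 3$. Your suggestion to ``record the color class of the attaching edge'' is not enough to reproduce this: the Type~I constraint couples the colors of the two base edges of each house to the colors of the $F_3$-boundary edges on either side of it, and these constraints must be satisfied simultaneously for all houses around $F_3$, which is why the paper resolves them with explicit periodic patterns on $F_3$ rather than one house at a time. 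Your instinct that the ``good coloring'' strengthening is needed for the induction to close is right, but as written the proposal lacks both the gadget-replacement device that makes extension possible and the multi-layer decomposition that makes the house rigidity tractable.
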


\textbf{Proof of Theorem~\ref{12222-extended}:} Suppose not, i.e., there exist $2$-connected subcubic outerplanar graphs that have no good coloring. We take the graph $G$ with minimum $|V(G)|+|E(G)|$. We may assume the weak dual of $G$ has diameter at least two, since otherwise $G$ itself is a cycle and we color the edges with the pattern $[1,2_a,2_b]$ (we can use colors $2_c, 2_d$ at the end to avoid conflicts between $2_a$s and $2_b$s).

\begin{lemma}\label{lem:pendant_is_triangle}
    Every pendant face of $G$ is a triangle.
\end{lemma}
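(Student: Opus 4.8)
The plan is to argue by the minimality of $G$. Since $G$ is $2$-connected and outerplane, its weak dual $\tau(G)$ is a tree, and a \emph{pendant face} is precisely a leaf of $\tau(G)$: a bounded face $F$ that shares exactly one edge, a chord $xy$, with a single neighbouring bounded face, while every other edge of $F$ lies on the outer boundary. First I would record the local structure of such an $F$. Writing its boundary as the chord $xy$ together with the outer path $x v_1 v_2 \cdots v_{\ell-2} y$, each internal vertex $v_i$ must have degree $2$: a third edge at $v_i$ could only be a chord drawn inside the outer cycle, and since $v_{i-1}v_i$ and $v_iv_{i+1}$ are consecutive edges of the face $F$, such a chord would have to lie in the interior of $F$, contradicting that $F$ is a face. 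Thus a pendant face is an ``ear'' of degree-$2$ vertices hung on the chord $xy$.

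Now suppose for contradiction that some pendant face $F$ has length $\ell \ge 4$, so the ear contains at least two degree-$2$ vertices. I would form a smaller graph $G'$ from $G$ by deleting the internal vertices $v_1,\dots,v_{\ell-2}$ and inserting a single new vertex $z$ together with the edges $xz$ and $zy$; this replaces $F$ by the triangle $xzy$ while leaving the rest of $G$ (in particular the chord $xy$ and the neighbouring face) untouched. The graph $G'$ is again $2$-connected, subcubic and outerplane, and since $\ell \ge 4$ we have $|V(G')|+|E(G')| < |V(G)|+|E(G)|$. By the minimality of $G$, the graph $G'$ admits a good coloring $f'$.

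The heart of the argument is to extend $f'$ to a good coloring $f$ of $G$, contradicting the choice of $G$. I keep $f'$ unchanged on every edge of $G$ outside the ear (these are exactly the edges of $G'$ other than $xz,zy$), and I recolor the path $x v_1 \cdots v_{\ell-2} y$ from scratch using the five colours $1,2_a,2_b,2_c,2_d$. The only external constraints come from the bounded neighbourhood of the two endpoints: the chord $xy$ and the at most one further boundary edge at each of $x,y$, together with their distance-$2$ companions. Because the ear is a long path whose interior edges see no edge outside the ear within the relevant distance, the middle edges can be coloured with a short periodic pattern (as in the cycle base case), and only the two end edges $xv_1$ and $v_{\ell-2}y$ must be fitted to the endpoint constraints. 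Here the one genuine subtlety is that $xv_1$ and $v_{\ell-2}y$ are joined through the short chord $xy$ and hence lie at distance $2$ from each other, so they cannot share a $2$-colour; I would dispose of the finitely many endpoint configurations by a direct colour count, using that at each end only a bounded number of colours are excluded while five are available.

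Finally I would verify that $f$ is \emph{good}, i.e.\ that every house subgraph is still coloured by a Type~I coloring. Every house of $G$ that avoids the ear is coloured exactly as in $G'$ and is therefore Type~I by the choice of $f'$; moreover no house can use the new degree-$2$ path as part of its rigid frame, since the frame of a house requires several degree-$3$ vertices, whereas $v_1,\dots,v_{\ell-2}$ all have degree $2$. Hence $f$ is a good coloring of $G$, the desired contradiction, so no pendant face of length $\ell \ge 4$ can exist and every pendant face is a triangle. The main obstacle is precisely the endpoint colour-count for $xv_1$ and $v_{\ell-2}y$ when $\ell$ is small (notably $\ell=4$), where the interaction through the chord leaves the least room; the remainder is bookkeeping.
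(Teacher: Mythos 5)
Your reduction is exactly the paper's: replace the long pendant face by a triangle $xzy$ on the same chord, invoke minimality to get a good coloring $f'$ of $G'$, and extend. The setup (internal vertices of the ear have degree $2$, the graph $G'$ stays $2$-connected subcubic outerplane and is smaller, houses of $G$ avoid the ear so goodness is inherited) is all correct. The gap is in the one step you wave at: extending $f'$ across the ear ``by a direct colour count, using that at each end only a bounded number of colours are excluded while five are available.'' That count is not true as stated. The end edge $xv_1$ lies within distance $2$ of up to five external edges ($xy$, $xu_1$, the two further edges at $u_1$, and $yu_k$), so in principle all four $2$-colours can be forbidden there while colour $1$ is forbidden because $xy$ or $xu_1$ carries it; symmetrically at $y$. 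Worse, both ends can be left with only the same single $2$-colour available, which is fatal since $xv_1$ and $v_{\ell-2}y$ are at distance $2$ through the chord. These bad endpoint configurations are not excluded by any counting argument on $G$ alone --- they are excluded only because $f'$ successfully colours the two \emph{adjacent} edges $xz$ and $zy$ of the inserted triangle, which face exactly the same external constraints as $xv_1$ and $v_{\ell-2}y$. By announcing that you ``recolor the path from scratch,'' you throw away precisely the information that makes the extension feasible.

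The repair is the one the paper carries out: keep $f'(xz)$ and $f'(zy)$ as the colours of $xv_1$ and $v_{\ell-2}y$ (they are compatible with the outside and with each other by construction), then case-split on how many of $u_1x$, $xy$, $yu_k$ receive colour $1$ and fill the interior of the ear with an explicit periodic pattern such as $[1,2_d,1,2_b]$, adjusting the first and last interior edges to the transferred end colours. So your proposal is the right skeleton with the correct key construction already in place, but the justification of the crucial extension step is wrong as written and must be replaced by the transfer-plus-case-analysis argument.
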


\begin{proof} 
Suppose $F_1$ is a pendant face with the two $3$-vertices $v_1$ and $v_k$. Let the cycle $C=v_1v_2\cdots v_k$ be the boundary of $F_1$. Let $N(v_1) = \{u_1, v_2, v_k\}$ and $N(v_k) = \{u_k, v_1, v_{k-1}\}$. If $k\geq 4$, then we delete $v_2, \ldots, v_{k-1}$ and add a vertex $v$ with edges $vv_1$ and $vv_k$ to obtain a new graph $G'$. The graph $G'$ is a 2-connected subcubic outerplanar graph with $\vert V(G')\vert+\vert E(G')\vert < |V(G)|+ |E(G)|$. By the minimality of $G$, $G'$ has a good coloring $f$. We split the cases according to the number of $1$-color used on $u_1v_1, v_1v_k, v_ku_k$. We extend $f$ to $G$.

\textbf{Case 1:} The edges $u_1v_1$, $v_1v_k$, $v_ku_k$ have two $1$-colors. By symmetry, we may assume $f(u_1v_1) = f(u_kv_k) = 1, f(v_1v_k) = 2_a, f(vv_1) = 2_b$, and $f(vv_k) = 2_c$. Then for the pendant face $F_1$ of $G$, we use $2_b$ to color $v_1v_2$ and $2_c$ to color $v_{k-1}v_k$, and the pattern $[1, 2_d, 1, 2_b]$ to color the edges from $v_2v_3$ to $v_{k-2}v_{k-1}$.

\textbf{Case 2:} The edges $u_1v_1$, $v_1v_k$, $v_ku_k$ have one $1$-color. By symmetry, there are two subcases to consider.

\textbf{Case 2.1:} $f(v_1v_k) = 1$.  We may assume $f(u_1v_1) = 2_a, f(u_kv_k) = 2_b, f(vv_1) = 2_c, f(vv_k) = 2_d$. We use $2_c$ to color $v_1v_2$ and $2_d$ to color $v_{k-1}v_k$, the pattern $[1, 2_a, 1, 2_c]$ to color the edges from $v_2v_3$ to $v_{k-2}v_{k-1}$. 

\textbf{Case 2.2:} $f(u_1v_1) = 1$. We may assume $f(v_1v_k) = 2_a, f(v_ku_k) = 2_b$, and $f(vv_1) = 2_c$. We use $2_c$ to color $v_1v_2$ and $1$ to color $v_{k-1}v_k$. We use the pattern $[2_d, 1, 2_b, 1]$ to color the edges from $v_{k-2}v_{k-1}$ to $v_2v_3$. 

\textbf{Case 3:} The edges $u_1v_1$, $v_1v_k$, $v_ku_k$ have no $1$-color. We may assume $f(u_1v_1) = 2_a, f(v_1v_k) = 2_b, f(u_kv_k) = 2_c, f(vv_k) = 2_d, f(vv_1) = 1$. We can switch the colors of $vv_1$ and $v_1v_k$, which was already solved in \textbf{Case 2.1}.
\end{proof}

We may assume $G$ has at least three faces since otherwise, by Lemma~\ref{lem:pendant_is_triangle}, it must be two triangles sharing an edge. In this case, we have five colors to color five edges. We show next a lemma about the ``most outside two layers'' of $G$.
\begin{lemma}\label{lem12222:outside_two_layer}
Let $F_1$ be a pendant face which is corresponding to a leaf in a longest path $P$ in the weak dual of $G$. Let $F_2$ be the unique face adjacent to $F_1$. Then $F_2$ must be a $4$-face.
\end{lemma}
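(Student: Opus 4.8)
The plan is to show that $F_2$ can be neither a triangle nor a face of length at least $5$, so that, its length being at least $3$, it must be a $4$-face. First I would fix notation coming from Lemma~\ref{lem:pendant_is_triangle}: since $F_1$ is a pendant face, it is a triangle $v_1v_2v_3$ whose only edge shared with $F_2$ is $v_1v_3$, where $v_2$ is a $2$-vertex while $v_1,v_3$ are $3$-vertices (each lies on $F_1$, on $F_2$, and on the outer face, forcing degree exactly $3$ by subcubicity). Consequently the two edges of $F_2$ incident with the shared edge, namely $v_1y$ and $v_3y'$ where $y,y'$ are the $F_2$-neighbours of $v_1,v_3$ other than $v_3,v_1$, lie between $F_2$ and the outer face and hence are boundary edges of $F_2$. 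I would also record, as context, the consequence of $F_1$ being an endpoint of a longest path $P$ in the (tree) weak dual $\tau(G)$: every neighbour of $F_2$ in $\tau(G)$ other than $F_1$ and its successor $F_3$ on $P$ is a leaf, hence by Lemma~\ref{lem:pendant_is_triangle} a pendant triangle; this keeps the distance-$2$ edges around $v_1,v_3$ under control.

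Next I would rule out the triangle case. If $F_2=v_1v_3y$ were a triangle, then all three neighbours of each of $v_1$ and $v_3$ lie in $F_1\cup F_2$, so the only possible vertex joining $\{v_1,v_2,v_3,y\}$ to the rest of $G$ is $y$. Either $y$ is a $2$-vertex, in which case $G=F_1\cup F_2$ consists of exactly two triangles and has only two faces, or $y$ has a neighbour outside this set, in which case $y$ is a cut vertex, contradicting $2$-connectivity. The first alternative contradicts the standing assumption that $G$ has at least three faces, and the second contradicts $2$-connectivity, so $F_2$ is not a triangle.

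The main work is to exclude $F_2$ of length at least $5$, which I would do by a minimality reduction: delete the $2$-vertex $v_2$ to form $G'=G-v_2$. Since $v_1v_3$ remains an edge, $G'$ is a $2$-connected subcubic outerplane graph smaller than $G$, so by minimality it has a good coloring $f'$; because $F_2$ has length at least $5$, the triangle $F_1$ cannot be part of any house graph (a house needs a $4$-face), so the houses of $G$ are exactly those of $G'$ and extending $f'$ preserves good-ness automatically. It then remains to colour the two legs $v_1v_2$ and $v_2v_3$ of $F_1$ with the five colours $1,2_a,2_b,2_c,2_d$ respecting the packing constraints: the only edges constraining $v_1v_2$ are $v_1v_3$ and $v_1y$ (at distance $1$) together with the few edges at distance $2$ across $v_1v_3$ and across $v_1y$, and symmetrically for $v_2v_3$. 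The extra room afforded by a face of length at least $5$ (so that the distance-$2$ edges on the two sides are distinct and, where they carry the $1$-colour, harmless) should always leave an admissible colour for each leg. I expect the hard part to be precisely this extension: organising it into cases according to how many of $v_1v_3,v_1y,v_3y'$ already carry the $1$-colour and checking in each case that two suitable colours remain, in the same spirit as Cases~1--3 in the proof of Lemma~\ref{lem:pendant_is_triangle}; the point is that this extension succeeds when $F_2$ has length at least $5$ but is genuinely obstructed when $F_2$ is a $4$-face (where $F_1\cup F_2$ is a house), which is exactly why the lemma singles out the $4$-face. Since $G$ admits no good coloring, the reduction yields a contradiction, and therefore $F_2$ must be a $4$-face.
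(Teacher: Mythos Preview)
Your reduction $G'=G-v_2$ is too weak: it is \emph{not} true that every good coloring of $G'$ extends to $v_1v_2,v_2v_3$ when $F_2$ has length at least~$5$. Here is a concrete obstruction in the $5$-face case. Write the boundary of $F_2$ as $v_1v_3y_3zy_1$; since $G$ has at least three faces and is $2$-connected subcubic, $z$ has degree~$3$, exactly one of $y_1z,y_3z$ is the edge shared with $F_3$, and the other is an outer edge. Suppose $y_1z$ is the inner edge, so $y_1$ has a third neighbour $a_1$ on $F_3$ while $y_3$ has degree~$2$. Consider a good coloring $f'$ of $G'$ with
\[
f'(v_1v_3)=2_a,\quad f'(v_1y_1)=1,\quad f'(v_3y_3)=2_b,\quad f'(y_1z)=2_c,\quad f'(y_1a_1)=2_d.
\]
These values are mutually compatible, and one checks that $f'(y_3z)=1$ and $f'(zw)=2_a$ are then forced locally, with no contradiction arising further out on $F_3$. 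Now every colour is forbidden for $v_1v_2$: the adjacent edges $v_1v_3,v_1y_1$ forbid $2_a$ and $1$, and the distance-$2$ edges $v_3y_3,y_1z,y_1a_1$ forbid $2_b,2_c,2_d$. Moreover no local recoloring helps: $v_1y_1$ is adjacent or at distance~$2$ to edges carrying all four $2$-colours, so $f'(v_1y_1)=1$ is forced, and similarly $f'(v_1v_3)=2_a$ is forced. Thus the promised case analysis ``how many of $v_1v_3,v_1y,v_3y'$ carry the $1$-colour'' cannot by itself close the argument.

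The paper avoids this by a much more aggressive reduction. Instead of deleting only the apex $v_2$, it deletes \emph{all} of $F_2$ except the two vertices shared with $F_3$, together with every pendant triangle on $F_2$, and replaces them by a house. Because good colorings force every house into Type~I, the colours at the interface with $F_3$ are completely pinned down (up to symmetry), and from that fixed boundary data the paper can write out an explicit pattern on the boundary of $F_2$ and on its pendant triangles. Your triangle-case argument and the observation that houses of $G$ and $G'$ coincide are fine; the gap is that the simple deletion leaves you no control over $f'$ near $y_1$, and exactly there the extension can be blocked.
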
       

\begin{proof}
Suppose not, i.e., $F_2$ is a $k$-face with $k \ge 5$. Since $G$ has at least three faces, $P$ has at least $3$ vertices. Let $F_3$ be the face which corresponds to the other neighbour of $F_2$ in $P$. Let the boundary cycle of $F_2$ be $v_1v_2 \cdots v_k$, where $v_1, v_k \in V(F_2) \cap V(F_3)$. We delete $v_2, \ldots, v_{k-1}$ and all pendant faces adjacent to it (which must be triangles). Then we add $w, w_2, w_3$ with edges $v_1w_2, w_2w_3, w_3v_k, ww_2, ww_3$ to obtain $G'$, which is a 2-connected subcubic outerplanar graph with $\vert V(G')\vert+\vert E(G')\vert < |V(G)| + |E(G)|$ . By the minimality of $G$, $G'$ has a good coloring $f$. We extend $f$ to $G$. 

Let $N_G(v_1) = \{u_1, v_2, v_k\}$ and $N_G(v_k) = \{v_1, v_{k-1}, u_k\}$. Then we know $f(u_1v_1) = f(ww_3) = 2_a$, $f(u_kv_k) = f(ww_2) = 2_b$, $f(w_3v_k) = 2_c$, $f(v_1w_2) = 2_d$, and $f(v_1v_k) = f(w_2w_3) = 1$. We first color $v_1v_2, v_{k-1}v_k$ with $2_d,2_c$, then use the pattern $[1, 2_a, 1, 2_d]$ to color the edges $v_2v_3$ through $v_{k-2}v_{k-1}$ in the boundary of $F_2$. If there is a $3$-face on top of $v_iv_{i+1}$, then we call the third vertex $v_i'$, where $2 \le i \le k-2$. We split the proof into two cases based on the largest subscript $j$ of $v_j'$.

\textbf{Case 1:} $j \le k-3$. We color $v_{j}v_{j}', v_j'v_{j+1}$ with $2_c, 2_b$, and the other uncolored edges of $3$-faces $v_iv_i', v_{i}'v_{i+1}$ with $2_c, 2_b$. 

\textbf{Case 2:} $j = k-2$. If $f(v_{k-2}v_{k-1}) = 1$, then color $v_{k-2}v_{k-2}', v_{k-2}'v_{k-1}$ with $2_b,x$, $x \in \{2_a, 2_d\} - f(v_{k-3}v_{k-2})$. Otherwise, we color $v_{k-2}v_{k-2}',v_{k-2}'v_{k-1}$ with $2_b,1$. The other uncolored edges of $3$-faces $v_iv_i', v_{i}'v_{i+1}$ are colored with $2_b, 2_c$.
\end{proof}

Now, we complete the proof of Theorem~\ref{12222-extended}. By Lemma~\ref{lem12222:outside_two_layer}, if $P$ only has three vertices, then $G$ is a special house with $w_6 = w_7$ in Fig.~\ref{determinedcoloring}. Then we can color $G$ with the Type I coloring. Therefore, we may assume $P$ has at least $4$ vertices. Let $F_1$ be a leaf of $P$, and $P = F_1F_2F_3F_4 \ldots$. Let the boundary cycle of $F_3$ be $v_1 \ldots v_k$, $k \ge 4$, with $v_1,v_k \in V(F_3) \cap V(F_4)$. We delete $\{v_2, \ldots, v_{k-1}\}$ together with all the ``houses'' and $3$-faces attached to $F_3$ (except $F_4$), and add vertices $w, w_1, w_2$ with edges $v_1w_1, w_1w_2, w_2v_k, ww_1, ww_2$ to obtain a graph $G'$ with $|V(G')| + |E(G')|<|V(G)|+|E(G)|$. By the minimality of $G$, $G'$ has a good coloring $f$. We extend $f$ to $G$. 

In our proof, we may assume only houses can be attached to $F_3$ (except $F_4$), since type I coloring guarantees that the two edges $w_2w_3$ and $w_4w_5$ in $G_0$ (of Fig.~\ref{determinedcoloring}) are colored with different colors in $\{2_a, 2_b, 2_c, 2_d\}$. We also assume there are no adjacent $2$-vertices on the boundary cycle of $F_3$, since otherwise we add a house on top of them. If there is a house attached to $F_3$, where the house and $F_3$ share vertices $v_i,v_{i+1}$, then the three vertices of the house not in $F_3$ are named $\{v_i',v_{i+1}', v_i''\}$ with $v_iv_i', v_{i+1}v_{i+1}', v_i'v_{i+1}'', v_i'v_i'', v_i''v_{i+1}' \in E(G)$. We split the proof into three cases depending on $k$.

\textbf{Case 1:} $k \equiv 2 \mod{3}$. We color the boundary face of $F_3$ starting from $v_1v_2$ using the pattern $[2_d,2_b,2_a]$ until the edge $v_{k-2}v_{k-1}$ and color $v_{k-1}v_k$ with $2_c$. Since $k \equiv 2 \mod{3}$, $v_{k-2}v_{k-1}$ is colored with $2_a$. For the houses attached to $F_3$ on $v_i,v_{i+1}$, where $2 \le i \le k-3$, we color $v_{i+1}v_{i+1}'$ with $f(v_iv_{i+1})$ and recolor $v_iv_{i+1}$ with $1$, then we color $v_iv_i', v_i'v_{i+1}', v_i'v_i'', v_i''v_{i+1}'$ with $2_c,1, f(v_{i+1}v_{i+2}), f(v_{i-1}v_i)$. In case $v_{k-1}$ is a $3$-vertex, then we recolor $v_{k-2}v_{k-1}$ with $1$, and color $v_{k-1}v_{k-1}', v_{k-2}v_{k-2}', v_{k-2}'v_{k-1}',$
$v_{k-2}'v_{k-2}'', v_{k-2}''v_{k-1}'$ with $2_d, 2_a, 1, 2_c, 2_b$.

\textbf{Case 2:} $k \equiv 1 \mod{3}$. We color the boundary face of $F_3$ starting from $v_1v_2$ using the pattern $[2_d,2_b,2_a]$ until the edge $v_{k-2}v_{k-1}$ and color $v_{k-1}v_k$ with $2_c$. Since $k \equiv 1 \mod{3}$, $v_{k-2}v_{k-1}$ is colored with $2_b$. If $v_{k-1}$ is a $3$-vertex, we recolor $v_{k-2}v_{k-1}$ with $1$ and color $v_{k-2}v_{k-2}', v_{k-1}v_{k-1}', v_{k-1}'v_{k-2}', v_{k-2}'v_{k-2}'', v_{k-2}''v_{k-1}'$ with $2_b, 2_a, 1, 2_c, 2_d$. If $v_{k-1}$ is a $2$-vertex, then $v_{k-2}$ is a $3$-vertex, we recolor $v_{k-3}v_{k-2}, v_{k-2}v_{k-1}$ with $1,2_d$ and color $v_{k-3}v_{k-3}', v_{k-2}v_{k-2}', v_{k-2}'v_{k-3}', v_{k-3}'v_{k-3}'', v_{k-3}''v_{k-2}'$ with $2_c, 2_b, 1, 2_d, 2_a$. For the other houses attached to $F_3$ on $v_i,v_{i+1}$, where $2 \le i \le k-4$, we color $v_{i+1}v_{i+1}'$ with $f(v_iv_{i+1})$ and recolor $v_iv_{i+1}$ with $1$, then we color $v_iv_i', v_i'v_{i+1}', v_i'v_i'', v_i''v_{i+1}'$ with $2_c,1, f(v_{i+1}v_{i+2}), f(v_{i-1}v_i)$.

\textbf{Case 3:} $k \equiv 0 \mod{3}$. We color the boundary face of $F_3$ starting from $v_1v_2$ using the pattern $[2_d,2_b,2_a]$ until the edge $v_{k-2}v_{k-1}$ and color $v_{k-1}v_k$ with $2_c$. If $v_{k-1}$ is a $2$-vertex, then for the houses attached to $F_3$ on $v_i,v_{i+1}$, where $2 \le i \le k-3$, we color $v_{i+1}v_{i+1}'$ with $f(v_iv_{i+1})$ and recolor $v_iv_{i+1}$ with $1$, then we color $v_iv_i', v_i'v_{i+1}', v_i'v_i'', v_i''v_{i+1}'$ with $2_c,1, f(v_{i+1}v_{i+2}), f(v_{i-1}v_i)$. Thus, we may assume $v_{k-1}$ is a $3$-vertex. We define $j$ as the largest index $2 \le i \le k-3$ such that $v_j$ is a $2$-vertex (if such a vertex does not exist, then $j=1$). If $j = k-3$, then $k \ge 9$ and $v_{k-4}$ is a $3$-vertex. We recolor $v_{k-2}v_{k-1}, v_{k-5}v_{k-4}, v_{k-4}v_{k-3}$ with $1,1,2_d$, and color $v_{k-5}v_{k-5}', v_{k-4}v_{k-4}', v_{k-5}'v_{k-4}', v_{k-5}'v_{k-5}'', v_{k-5}''v_{k-4}'$ with $2_c, 2_b, 1, 2_d, 2_a$, and color $v_{k-2}v_{k-2}', v_{k-1}v_{k-1}', v_{k-2}'v_{k-1}', v_{k-2}'v_{k-2}'', v_{k-2}''v_{k-1}'$ with $2_b, 2_d, 1, 2_c, 2_a$. If $j \le k-4$, then it implies $j \le k-5$, we color $v_{j+1}v_{j+1}', v_{j+2}v_{j+2}', v_{j+1}'v_{j+2}', v_{j+1}'v_{j+1}'', v_{j+1}''v_{j+2}'$ with $2_c, f(v_{j+2}v_{j+3}), 1, f(v_{j+1}v_{j+2}), f(v_jv_{j+1})$, recolor the edges $v_{j+2}v_{j+3}, \ldots, v_{k-2}v_{k-1}$ with the colors $f(v_{j+1}v_{j+2}), \ldots, f(v_{k-3}v_{k-2})$, and recolor $v_{j+1}v_{j+2}$ with $1$. Let the recolored coloring be called $f_1$. For all the other houses attached to $F_3$ on $v_i,v_{i+1}$, where $2 \le i \le k-3$ ($i \neq j+1$ when $j \le k-5$; $i \neq j+1$ and $i \neq j-2$ when $j = k-3$), we color $v_{i+1}v_{i+1}'$ with $f_1(v_iv_{i+1})$ and recolor $v_iv_{i+1}$ with $1$, then we color $v_iv_i', v_i'v_{i+1}', v_i'v_i'', v_i''v_{i+1}'$ with $2_c,1, f_1(v_{i+1}v_{i+2}), f_1(v_{i-1}v_i)$. \hfill \qed


\subsection{$(1,2^4,k_1)$-packing edge-coloring of subcubic outerplanar graphs}\label{12222k}

\begin{example}
The gadget $G_0$ and graph $G$ are shown in Fig.~\ref{example_4}. Each of $G_0'$ and $G_0''$ is isomorphic to $G_0$. The vertices $v',v''$ in $G_0',G_0''$ correspond to $v$ in $G_0$. We show $G$ has no $(1,2^4,7)$-packing edge-coloring.
\end{example}

\begin{figure}[ht]
\vspace{-11mm}
\begin{center}
  \includegraphics[scale=0.65]{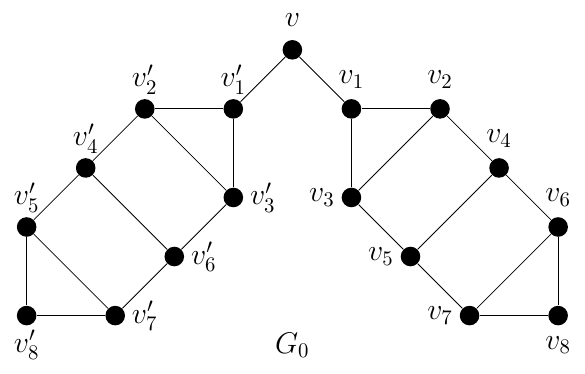} \hspace{10mm}
  \includegraphics[scale=0.4]{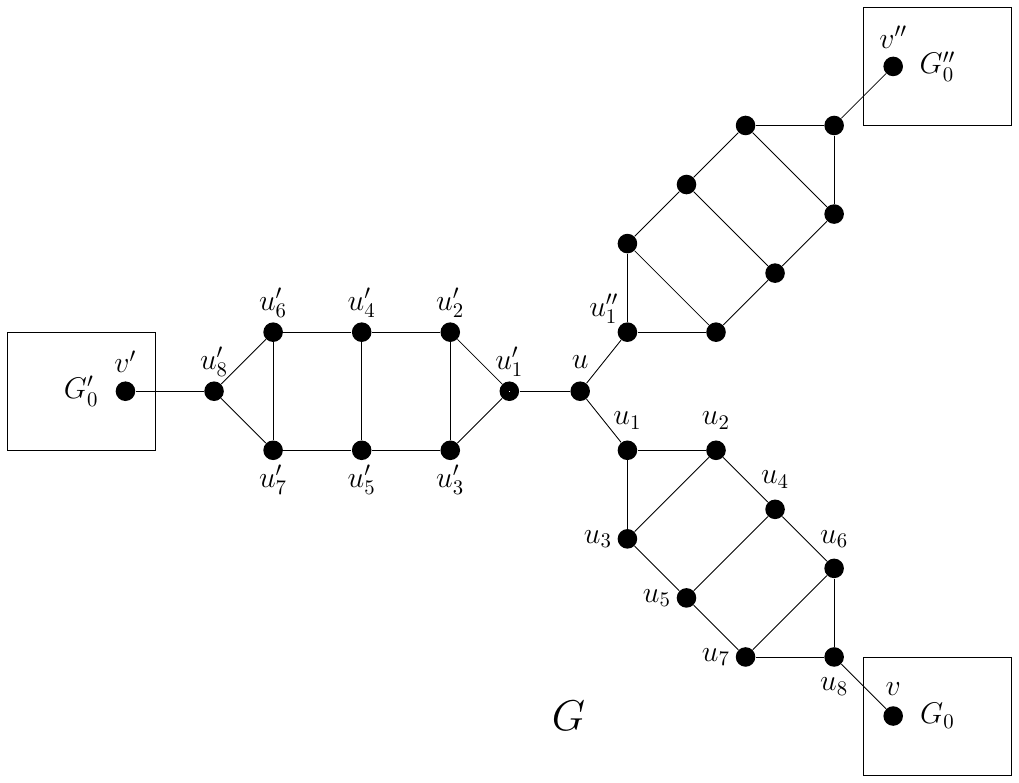} 
   \vspace{-3mm}
\caption{A subcubic outerplanar graph that is not $(1,2^4,7)$-packing edge-colorable.}\label{example_4}
\end{center}
\vspace{-8mm}
\end{figure}

\begin{proof}
Suppose $G$ has a $(1,2^4,7)$-packing edge-coloring $f$. We first consider the case when one of $uu_1, uu_1', uu_1''$ is colored with $7$, say $f(uu_1'') = 7$. Then none of the edges in the graph $G_1$ induced by $\{u_1, \ldots, u_8\}$ and the graph $G_1'$ induced by $\{u_1', \ldots, u_8'\}$ can be colored with $7$. By Remark~\ref{twohouses}, both $uu_1$ and $uu_1'$ must be colored with $1$, which is a contradiction. Thus, we know color $7$ is not used on $uu_1, uu_1', uu_1''$.

Since $uu_1, uu_1', uu_1''$ have the same endpoint $u$, at least two of them must use a color in $\{2_a, 2_b, 2_c, 2_d\}$, say $f(uu_1) = 2_a$ and $f(uu_1') = 2_b$. By Remark~\ref{twohouses}, color $7$ must be used on both $G_1$ and $G_1'$. Due to the distance restriction, color $7$ is used on either the triangle $u_6u_7u_8$ or $u_6'u_7'u_8'$, say color $7$ is used on the triangle $u_6u_7u_8$. Then none of $vu_8, vv_1, vv_1'$, and the edges of $G_0$ can be colored with $7$. However, at least one of $vv_1$ and $vv_1'$ must be colored with a color in $\{2_a, 2_b, 2_c, 2_d\}$. This is a contradiction with Remark~\ref{twohouses}.
\end{proof}



Since every subcubic outerplanar graph $G$ is a subgraph of another subgraph $\hat{G}$ such that $\hat{G}$ has no $2$-vertices (by adding leaf edges to each $2$-vertices in $G$). We prove the following theorem on $(1,2^4,3)$-coloring of subcubic outerplanar graphs with no $2$-vertices. Let the graph in Fig.~\ref{house-with-chimney-1} be called the {\em ``house-with-chimney'' graph (HC graph)}. We define three types of coloring for the HC graph (See Fig.~\ref{house-with-chimney-1} for Types I, II, and III). If every HC graph is colored using Types I, II, and III unless the ``house-without-base'' graph is itself a block, and the only neighbor of each vertex of degree one does not see color $3$, then we call such a coloring {\em good coloring} in this subsection.

\begin{figure}[ht]
\begin{center}
 \vspace{-2mm}
 \includegraphics[scale=0.6]{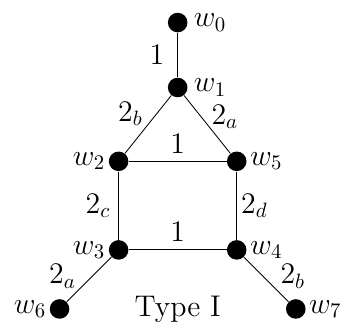} \hspace{8mm}
  \includegraphics[scale=0.6]{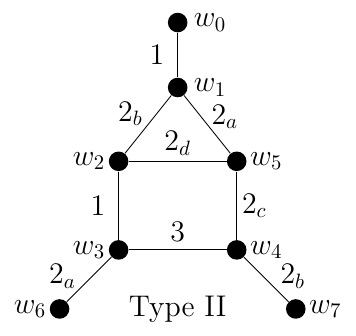} \hspace{8mm}
   \includegraphics[scale=0.6]{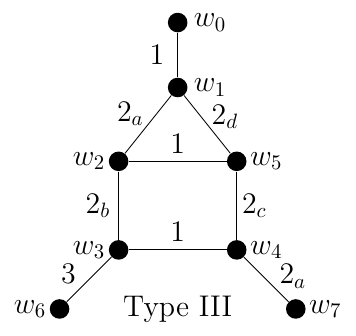} 
   \vspace{-3mm}
\caption{Type I, II, III colorings of the ``house-with-chimney'' graph.}\label{house-with-chimney-1}
\end{center}
\vspace{-8mm}
\end{figure}


\begin{theorem}\label{122223-main-theorem}
Every subcubic outerplanar graph with no $2$-vertices has a good coloring.    
\end{theorem}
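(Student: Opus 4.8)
The plan is to argue by a minimal counterexample, following the outer-layer peeling used for Theorem~\ref{12222-extended} but exploiting the two new ingredients that distinguish a good coloring here: the extra color $3$ (whose edges must stay pairwise at distance at least $4$) and the three admissible colorings Types~I, II, III of the HC gadget (Fig.~\ref{house-with-chimney-1}). Let $G$ be a connected subcubic outerplanar graph with no $2$-vertices and no good coloring, chosen with $|V(G)|+|E(G)|$ minimum; then every vertex of $G$ has degree $1$ or $3$. Because we now allow cut vertices, such graphs are genuinely not $2$-connected: the degree-$1$ vertices sit at the ends of pendant edges, and the non-$2$-connected part of $G$ is built from pendant edges and pendant house/HC gadgets hanging off the $2$-connected ``core''. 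The two extra clauses in the definition of good coloring are precisely the interface data that make the induction work across cut vertices: the requirement that the neighbor of each degree-$1$ vertex avoids color $3$ decouples a pendant gadget from the rest, since across a cut vertex the only binding distance requirement is the distance-$\ge 4$ one for color $3$. That color $3$ is genuinely needed is already visible from Example~\ref{no12222}: the graph $G_3$, two houses glued at a cut vertex, has \emph{no} $(1,2^4)$-coloring, so a plain Type~I house cannot always be glued and the color-$3$-bearing Types~II, III must be available.

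First I would dispose of the base case. If $G$ is $2$-connected (or one of the degenerate small graphs handled before Lemma~\ref{lem:pendant_is_triangle}), then it has no pendant edge and no chimney, so both extra clauses are vacuous and Theorem~\ref{12222-extended} already produces a $(1,2^4)$-coloring with all houses of Type~I, which is a good coloring here (color $3$ simply unused). Hence I may assume $G$ has a cut vertex, and therefore a pendant gadget: either a pendant edge or a pendant house/HC gadget attached to the rest of $G$ at a single cut vertex $c$.

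The heart of the argument is then a reduction that replaces such a pendant gadget by a strictly smaller one — a pendant house/HC gadget by a single pendant edge, or a long pendant face by a short one — so as to obtain a graph $G'$ with no $2$-vertices and $|V(G')|+|E(G')|<|V(G)|+|E(G)|$. By minimality $G'$ has a good coloring $f$; crucially, the boundary clause of good coloring tells me that at the residual degree-$1$ vertex (namely $c$, or the attachment of the shrunken face) the neighbor does not see color $3$. I then re-expand the gadget, choosing among HC Types~I, II, III the one whose interface colors are compatible with the at most two colors already forced at $c$. This is exactly where color $3$ earns its keep: when the colors around $c$ would force a Type~I house into an illegal pattern (the $G_3$ obstruction), I route color $3$ onto a chimney edge and switch to Type~II or Type~III, which is always possible because the interface was kept free of color $3$. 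For the within-core reductions I reproduce the structural lemmas of Theorem~\ref{12222-extended} — every pendant face is a triangle, the face adjacent to a pendant leaf of a longest path in the weak dual is a $4$-face (cf.\ Lemmas~\ref{lem:pendant_is_triangle} and~\ref{lem12222:outside_two_layer}), and a main case analysis on the third face $F_3$ of a longest path according to $k\bmod 3$ — now carried out with houses/HC gadgets decorating $F_3$ and with the extra color available.

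I expect the main obstacle to be the extension bookkeeping in this last step. Color $3$ is the most restrictive color, so each time it is placed I must certify that no other color-$3$ edge lies within distance $3$, while simultaneously guaranteeing that every HC gadget receives one of Types~I, II, III and that no neighbor of a degree-$1$ vertex ever sees color $3$. Coordinating these three requirements across the residues $k\equiv 0,1,2 \pmod 3$ and the various local configurations (which boundary $2$-vertices were turned into houses, which chimneys are present, and whether the pendant gadget is a genuine HC graph or the exceptional house-without-base block) is where essentially all of the case work lives.
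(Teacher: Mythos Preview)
Your high-level instinct (minimal counterexample, peel outer layers, use the HC types as interface data) matches the paper, but two concrete missteps would derail the plan as written.

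First, your ``base case'' is empty. A $2$-connected outerplanar graph has a Hamiltonian outer cycle, and every leaf of its weak dual is a face whose non-chord vertices have degree~$2$; so there is no $2$-connected subcubic outerplanar graph with no $2$-vertices at all. Invoking Theorem~\ref{12222-extended} therefore buys nothing, and the entire content of the theorem lives in the case you defer. Relatedly, your reduction ``replace a pendant house/HC gadget by a single pendant edge'' is the wrong direction and does not preserve the no-$2$-vertex hypothesis: if a house-without-base block is attached at a cut vertex $c$, then $c$ has two neighbours inside the gadget and one outside, so after your replacement $c$ has degree~$2$. In the paper the HC gadget is not something to be \emph{removed}; it is the reduction \emph{target}. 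One deletes the outermost layers of a chosen leaf block and \emph{adds} an HC gadget on the two surviving boundary vertices, so that the smaller graph $G'$ still has no $2$-vertices and, crucially, the good-coloring hypothesis forces the interface at those two vertices into one of the three admissible Types. The extension case analysis is then organised primarily by which Type the gadget received (with $k\bmod 3$ only as a secondary split), not the other way around.

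Second, the within-block structural lemmas are not the ones from Section~\ref{12222}. With a leaf edge hanging on every boundary vertex, the analogue of Lemma~\ref{lem:pendant_is_triangle} no longer forces pendant faces to be triangles: the paper only obtains that they are $3$- or $4$-faces (Lemma~\ref{122223-outside-1}), and a separate remark later pins the outermost one down to a triangle once $F_2$ is shown to be a $4$-face. There is also an additional lemma (Lemma~\ref{122223-two-layers}) ruling out $\ell=1$ that has no counterpart in your sketch, because with pendant edges the single-face leaf block already requires nontrivial work. So the bookkeeping you anticipate is real, but it is driven by the HC Type at the interface and by the leaf edges decorating every vertex, not by carrying over Lemmas~\ref{lem:pendant_is_triangle} and~\ref{lem12222:outside_two_layer} verbatim.
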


\textbf{Proof of Theorem~\ref{122223-main-theorem}:} Suppose not, i.e., there is a subcubic outerplanar graph with no $2$-vertices and it has no good coloring. We take the graph $G$ with minimum $|V(G)| + |E(G)|$. We may assume there is no tree of at least two edges hanging on a cycle in $G$, i.e., if there is a tree hanging on a cycle, then it must be an edge. To see this, let $T$ be a tree hanging on a cycle $C$ in $G$, where $u \in V(C) \cap V(T)$, and we assume $T$ has at least two edges (in fact, at least three edges). Let $P_1$ be a longest path in $T$ ended at $u$, say $P = u \ldots u_1$. Let the only neighbour of $u_1$ be $u_2$. Since there are no $2$-vertices, $u_2$ has another leaf neighbour $u_1'$. Since $P_1$ has length at least $2$, $u_2 \neq u$. We delete $u_1,u_1'$ from $G$ to obtain a graph $G'$, which is a subcubic outerplanar graph with no $2$-vertices and satisfies $|V(G')|+|E(G')|<|V(G)|+|E(G)|$. By the minimality of $G$, $G'$ has a good coloring $f$. Since there are at most $3$ edges within distance two of $u_1u_2, u_1'u_2$ in $G$ and we have $5$ non-restricted colors $1,2_a,2_b,2_c,2_d$ to use at $u_1u_2,u_1'u_2$, we can extend $f$ to $G$. By the same proof, we may assume that $G$ has at least one cycle. In other words, $G$ has at least one non-trivial block.

We pick a non-trivial leaf block $B_1$. Let $u_0$ be the vertex of $B_1$ connecting the other blocks in $G$ and the face containing $u_0$ be $F_0$ (if there is only one non-trivial block $B_1$, then we pick any leaf edge $v_0u_0$ in a cycle of $B_1$, where $v_0$ is the $1$-vertex). We may assume $B_1$ has at least two faces. Otherwise, let $u_0u_1 \ldots u_k$ be the boundary cycle of $F_0$, where $k \ge 2$, and $N(u_0) = \{u_1, u_k, v_0\}$ and $N(v_0) = \{u_0, v_0', v_0''\}$. Since $G$ has no $2$-vertices, there is a leaf edge $u_iu_i'$ hanging on each of $u_i$, where $1 \le i \le k$. We delete all $u_i,u_i'$, $1 \le i \le k$, to obtain $G'$. By the minimality of $G$, $G'$ has a good coloring $f$. Then $|\{f(v_0u_0), f(v_0v_0'), f(v_0v_0'')\} \cap \{2_a, 2_b, 2_c, 2_d\}| \le 2$, since if it is $3$ then we recolor $u_0v_0$ to $1$. 

\textbf{Case 1:} $f(u_0v_0) = 1$. We color $u_0u_1, u_0u_k$ with two colors in $\{2_a, 2_b, 2_c, 2_d\}$, say $2_a,2_b$. Color $u_iu_i'$ with $1$, $1 \le i \le k$. If $k \equiv 0 \text{ or } 2 \text{ } (mod \text{ } 3)$, then use the pattern $[2_c, 2_d, 2_a]$ to color the edges $u_1u_2$ until $u_{k-1}u_{k}$. If $k = 4$, then color $u_1u_2, u_2u_3, u_3u_4$ with $2_c, 1, 2_d$ and recolor $u_1u_1', u_2u_2', u_3u_3', u_4u_4'$ with $1,2_b,2_a,1$. If $k \equiv 1 \text{ } (mod \text{ } 3)$ and $k \neq 4$, then color $u_1u_2,u_2u_3$ with $2_c, 2_b$ and use the pattern $[2_a, 2_c, 2_d]$ to color the edges $u_3u_4$ until $u_{k-1}u_{k}$.

\textbf{Case 2:} $f(u_0v_0) \in \{2_a,2_b, 2_c, 2_d\}$, say $f(u_0v_0) = 2_c$ and we can color $u_0u_k$ with $2_b$. Color $u_iu_i'$ with $1$, $2 \le i \le k$, $u_0u_1$ with $1$, and $u_1u_1'$ with $2_d$. If $k \equiv 1 \text{ or } 2 \text{ } (mod \text{ } 3)$, then use the pattern $[2_a, 2_c, 2_d]$ to color the edges $u_1u_2$ until $u_{k-1}u_{k}$. If $k = 3$, then color $u_1u_2, u_2u_3$ with $2_a, 1$ and recolor $u_2u_2', u_3u_3'$ with $2_c,2_d$. If $k \equiv 0 \text{ } (mod \text{ } 3)$ and $k \neq 3$, then color $u_1u_2,u_2u_3$ with $2_a, 2_b$ and use the pattern $[2_c, 2_d, 2_a]$ to color the edges $u_3u_4$ until $u_{k-1}u_{k}$.

We first show every pendant face (except $F_0$) in $B_1$ is a $3$-face or a $4$-face. 

\begin{lemma}\label{122223-outside-1}
Let $F_1$ be a pendant face in $B_1$ and $F_1 \neq F_0$. Then $F_1$ is a $3$-face or a $4$-face.   
\end{lemma}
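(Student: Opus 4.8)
The plan is to argue by the minimality of $G$: assuming $F_1$ is a pendant face of $B_1$ (a leaf of the weak dual of $B_1$), with $F_1 \ne F_0$, boundary cycle $v_1 v_2 \cdots v_k$ and $k \ge 5$, I would construct a strictly smaller graph $G'$ of the same type, invoke minimality to obtain a good coloring, and extend it back across $F_1$ to reach a contradiction; this forces $k \le 4$, i.e. $F_1$ is a $3$-face or a $4$-face.

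First I would pin down the local structure of $F_1$. Since $F_1$ is a leaf of the weak dual of $B_1$, it shares exactly one edge, say $v_1 v_k$, with its unique neighbouring face $F_2$, so $v_1,v_k$ are the two $3$-vertices incident with $F_2$, with third neighbours $u_1,u_k$ lying on $F_2$. Each internal vertex $v_i$ $(2 \le i \le k-1)$ lies on the cycle and hence has degree at least $2$; as $G$ has no $2$-vertices it has degree exactly $3$, and its third edge leaves $F_1$. Because $F_1$ is pendant this edge is not shared with another bounded face, and because $B_1$ is a leaf block whose only cut vertex is $u_0 \in F_0 \ne F_1$, the edge cannot lead to a further nontrivial block; combined with the standing assumption that every tree hanging on a cycle is a single edge, each $v_i$ therefore carries a pendant leaf edge $v_i v_i'$.

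Then I would reduce: delete $v_2,\ldots,v_{k-1}$ together with the pendant edges $v_i v_i'$, and reattach a bounded gadget — for instance two new vertices $w_1,w_2$ with edges $v_1 w_1, w_1 w_2, w_2 v_k$ and pendants $w_1 w_1', w_2 w_2'$ — turning $F_1$ into a $4$-face while keeping $v_1,v_k,w_1,w_2$ all of degree $3$. The resulting $G'$ is a subcubic outerplanar graph with no $2$-vertices, and since $k \ge 5$ we have $|V(G')|+|E(G')| < |V(G)|+|E(G)|$; by minimality $G'$ has a good coloring $f$. Note that the gadget is neither a house nor an HC graph, so no special coloring constraint is created in $G'$. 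I would retain the colors $f$ assigns to $v_1 v_k,\, u_1 v_1,\, u_k v_k$ (and to all of $G$ outside $F_1$), discard the gadget, and recolor the reinserted face $F_1$.

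For the extension I would color every pendant edge $v_i v_i'$ with color $1$ and color the spine $v_1 v_2,\ldots,v_{k-1}v_k$ with a period-$3$ pattern drawn from $\{2_a,2_b,2_c\}$, reserving $2_d$ as a buffer near the two ends; crucially I would use no color $3$ on the internal part, so that the neighbour $v_i$ of each degree-one vertex $v_i'$ does not see color $3$, as a good coloring requires. Any color-$3$ edge already present near $v_1$ or $v_k$ causes no difficulty, because we introduce no new $3$-edge, so the only genuine interaction is with the $1$- and $2$-colors fixed on $v_1 v_k, u_1 v_1, u_k v_k$ and on the edges of $F_2$ at $u_1,u_k$. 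The main obstacle, exactly as in Lemma~\ref{lem:pendant_is_triangle}, is this end-matching: I would split into cases according to how many of the three end edges are colored $1$ and which $2$-colors they and their $F_2$-neighbours occupy, and in each case fix the phase of the spine pattern (occasionally moving a $2$-color onto a pendant edge and placing a $1$ on the adjacent spine edge) so that both ends are reconciled without repeating a $2$-color within distance $3$. Since the spine has length $k-1 \ge 4$ and the pendant edges are free, there is enough room to absorb the constraints from both ends at once, and the few residual small cases (such as $k=5$) would be checked directly. This produces a good coloring of $G$, contradicting the choice of $G$, and hence $k \le 4$.
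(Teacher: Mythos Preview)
Your overall reduction-and-extend strategy matches the paper's, but two specific choices diverge from it and make your sketch harder to complete than you suggest.

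First, the paper replaces $F_1$ by a \emph{triangle} $v_1 v v_k$ together with a pendant $vv'$, not a $4$-face. This matters for more than size: with the triangle, the three gadget edges $vv_1,\,vv_k,\,vv'$ are pairwise adjacent and $v$ is the neighbour of a leaf, so in a good coloring none of them can be $3$ and at most one can be $1$. This immediately rules out the configuration $f(u_1v_1),f(v_1v_k),f(u_kv_k)\in\{2_a,2_b,2_c,2_d\}$ pairwise distinct (the paper's Case~1.3), since then only $\{1,2_d\}$ would remain for three mutually adjacent gadget edges. Your $4$-face gadget does \emph{not} exclude this case---one can good-color it with $v_1w_1=w_2v_k=1$, $w_1w_2=2_d$, $w_1w_1'=2_c$, $w_2w_2'=2_a$---so you inherit an extra case the paper never has to face.

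Second, and more importantly, the paper does \emph{not} discard the gadget colors: it reuses $f(vv_1)$ and $f(vv_k)$ verbatim as the colors of $v_1v_2$ and $v_{k-1}v_k$. This is the whole point of the gadget: $vv_1$ in $G'$ sits in exactly the same distance-$2$ neighbourhood (through $v_1$ and $v_k$) as $v_1v_2$ does in $G$, so its color is automatically admissible and already reconciled with the unknown edges at $u_1$ and $u_k$. By throwing this information away you are left to reconstruct valid end-colors from scratch, which is why your extension devolves into an unspecified case split on ``which $2$-colors they and their $F_2$-neighbours occupy.'' That split is substantially larger than you indicate, and the assertion that ``there is enough room'' is exactly what the paper spends ten explicit subcases verifying. (A small side remark: your claim that the $4$-face gadget never creates an HC graph is not quite right---if $F_2$ happens to be a triangle whose apex carries a pendant, then $F_2$ together with your $4$-face and the two leaves $w_1',w_2'$ \emph{is} an HC graph; this would actually constrain the coloring helpfully, but it contradicts what you wrote.)

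In short: keep the reduction, but switch to the triangle gadget and \emph{retain} the gadget's edge-colors as the seeds for the two spine ends; then the periodic extension goes through with the case analysis the paper gives.
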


\begin{proof}
Suppose $F_1$ is a $k$-face, where $k \ge 5$. Let the only face adjacent to $F_1$ be $F_2$ and the boundary cycle of $F_1$ be $v_1 \ldots v_k$ with $V(F_1) \cap V(F_2) = \{v_1, v_k\}$. Since there are no $2$-vertices, each $v_i$ has a leaf neighbour $v_i'$, $2 \le i \le k-1$. Let $N(v_1) = \{u_1, v_2, v_k\}$ and $N(v_k) = \{u_k,v_1, v_{k-1}\}$. We delete all $v_i,v_i'$, $2 \le i \le k-1$, and add a vertex $v,v'$ with edges $vv_1, vv_k, vv'$ to obtain a graph $G'$. By the minimality of $G$, $G'$ has a good coloring $f$. 
 
\textbf{Case 1:} Color $3$ is not used on $u_1v_1, v_1v_k, v_ku_k$.

\textbf{Case 1.1:} Color $1$ is used twice on $u_1v_1, v_1v_k, v_ku_k$. By the definition of good coloring, we may assume $f(u_1v_1) = f(v_ku_k) = 1$, $f(v_1v_k) = 2_a$, $f(vv_1) = 2_b$, $f(vv_k) = 2_c$, $f(vv') = 1$. Then we color $v_1v_2, v_{k-1}v_k$ with $2_b, 2_c$. If $k \equiv 2 \text{ }(\text{mod } 3)$, then we color $v_{k-2}v_{k-1}$ with $1$ and $v_2v_3$ until $v_{k-3}v_{k-2}$ using the pattern $[2_d,2_a,2_b]$. Furthermore, we color $v_{k-1}v_{k-1}', v_{k-2}v_{k-2}'$ with $2_b,2_a$ and all $v_iv_i'$ with $1$, $2 \le i \le k-3$. If $k \equiv 0 \text{ or } 1 \text{ }(\text{mod } 3)$, then we color the edges $v_2v_3$ until $v_{k-2}v_{k-1}$ using the pattern $[2_d,2_a,2_b]$ and all $v_iv_i'$ with $1$, $2 \le i \le k-1$.

\textbf{Case 1.2:} Color $1$ is used once on $u_1v_1, v_1v_k, v_ku_k$. There are two subcases.

\textbf{Case 1.2.1:} $f(u_1v_1) = f(vv_k) = 1$, $f(v_1v_k) = 2_a$, $f(v_ku_k) = 2_b$, $f(vv_1) = 2_c$, $f(vv') = 2_d$. Then we color $v_1v_2, v_{k-1}v_k, v_{k-1}v_{k-1}'$ with $2_c, 1, 2_c$ and $v_iv_i'$ with $1$, where $2 \le i \le k-2$. If $k \equiv 0 \text{ }(\text{mod } 3)$, then we color $v_{k-2}v_{k-1}$ until $v_2v_3$ using the pattern $[2_d,2_a,2_b]$. Otherwise, we color $v_{k-2}v_{k-1}$ until $v_2v_3$ using the pattern $[2_d,2_b,2_a]$. 

\textbf{Case 1.2.2:} $f(v_1v_k) = f(vv') = 1$, $f(u_1v_1) = 2_a$, $f(v_ku_k) = 2_b$, $f(vv_1) = 2_c$, $f(vv_k) = 2_d$. Then we color $v_1v_2, v_{k-1}v_k$ with $2_c, 2_d$. If $k \equiv 1 \text{ }(\text{mod } 3)$, then we color $v_{k-2}v_{k-1}$ with $1$ and $v_2v_3$ until $v_{k-3}v_{k-2}$ using the pattern $[2_b,2_a,2_c]$. Furthermore, we color $v_{k-1}v_{k-1}', v_{k-2}v_{k-2}'$ with $2_a,2_b$ and all $v_iv_i'$ with $1$, $2 \le i \le k-3$. If $k \equiv 0 \text{ or } 2 \text{ }(\text{mod } 3)$, then we color the edges $v_2v_3$ until $v_{k-2}v_{k-1}$ using the pattern $[2_b,2_a,2_c]$ and all $v_iv_i'$ with $1$, $2 \le i \le k-1$.

\textbf{Case 1.3:} Color $1$ is not used on $u_1v_1, v_1v_k, v_ku_k$. By the definition of good coloring, We may assume $f(u_1v_1)=2_a$, $f(v_1v_k) = 2_b$, $f(v_ku_k) = 2_c$. However, $vv', vv_1, vv_k$ can only be colored by $\{1, 2_d\}$, which is impossible.

\textbf{Case 2:} Color $3$ is used on $u_1v_1, v_1v_k, v_ku_k$.

\textbf{Case 2.1:} Color $1$ is used twice on $u_1v_1, v_1v_k, v_ku_k$. By the definition of good coloring, we may assume $f(u_1v_1) = f(v_ku_k) = 1$, $f(v_1v_k) = 3$, $f(vv_1) = 2_a$, $f(vv_k) = 2_b$. Then we color $v_1v_2, v_{k-1}v_k$ with $2_a, 2_b$ and the edges $v_2v_3$ until $v_{k-2}v_{k-1}$ using the pattern $[2_c, 2_d, 2_a]$. Furthermore, we color $v_iv_i'$ with $1$, where $2 \le i \le k-1$.

\textbf{Case 2.2:} Color $1$ is used once on $u_1v_1, v_1v_k, v_ku_k$. There are three subcases.

\textbf{Case 2.2.1:} $f(u_1v_1) = f(vv_k) = 1$, $f(v_1v_k) = 3$, $f(v_ku_k) = 2_a$, $f(vv_1) = 2_b$, $f(vv') = 2_c$. We color $v_1v_2, v_{k-1}v_k$ with $2_b, 1$ and $v_{k-2}v_{k-1}$ until $v_2v_3$ using the pattern $[2_d,2_a,2_c]$. We color $v_{k-1}v_{k-1}'$ with $2_b$ and all $v_iv_i'$ with $1$, $2 \le i \le k-2$.  

\textbf{Case 2.2.2:} $f(v_1v_k) = f(vv') = 1$, $f(u_1v_1) = 3$, $f(v_ku_k) = 2_a$, $f(vv_1) = 2_b$, $f(v_1v_k) = 2_c$. Then we color $v_1v_2, v_{k-1}v_k$ with $2_b, 2_c$ and $v_{k-2}v_{k-1}$ until $v_2v_3$ using the pattern $[2_d,2_a,2_c]$. Furthermore, we color all $v_iv_i'$ with $1$, $2 \le i \le k-1$.

\textbf{Case 2.2.3:} $f(v_ku_k) = f(vv_1) = 1$, $f(v_1v_k) = 2_a$, $f(u_1v_1) = 3$, $f(vv_k) = 2_b$. Then we color $v_1v_2, v_{k-1}v_k$ with $1,2_b$ and $v_{k-2}v_{k-1}$ until $v_2v_3$ together with $v_2v_2'$ using the pattern $[2_d, 2_c, 2_b]$. Furthermore, we color $v_iv_i'$ with $1$ for $3 \leq i \leq k-1$.

\textbf{Case 2.3:} Color $1$ is not used on $u_1v_1, v_1v_k, v_ku_k$. There are two subcases.

\textbf{Case 2.3.1:} $f(u_1v_1) = 2_a$, $f(v_1v_k) = 2_b$, $f(v_ku_k) = 3$, $\{f(vv_1), f(v_1v_k),f(vv')\} = \{1,2_c,2_d\}$. Then we can always exchange colors so that $v_1v_k$ is colored with $1$, which is already solved in \textbf{Case 2.2}.


\textbf{Case 2.3.2:} $f(u_1v_1) = 2_a$, $f(v_1v_k) = 3$, $f(v_ku_k) = 2_b$, $\{f(vv_1) f(v_1v_k),f(vv')\} = \{1,2_c,2_d\}$. We color $v_1v_2, v_{k-1}v_k$ with $1$ and $v_{k-2}v_{k-1}, \ldots, v_2v_3$ together with $v_2v_2'$ using the pattern $[2_c,2_b,2_d]$. Furthermore, we color $v_{k-1}v_{k-1}'$ with $2_a$ and all $v_iv_i'$ with $1$, $3 \le i \le k-2$.
\end{proof}

Let $P=F_0F_{\ell} \ldots F_2F_1$ be a longest path in the weak dual of $B_1$ starting at $F_0$. We first show $\ell \ge 2$. 


\begin{lemma}\label{122223-two-layers}
$\ell \ge 2$.   
\end{lemma}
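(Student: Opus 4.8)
The plan is to prove $\ell \ge 2$ by contradiction, assuming $\ell = 1$, which means the longest path in the weak dual of $B_1$ starting at $F_0$ has only two vertices, $F_0$ and $F_1$. In this situation every pendant face of $B_1$ other than $F_0$ is adjacent only to $F_0$ itself, so the whole block $B_1$ consists of $F_0$ together with a collection of pendant faces hanging directly on the boundary cycle of $F_0$. By Lemma~\ref{122223-outside-1} each such pendant face is a $3$-face or a $4$-face. First I would set up notation: let $u_0 u_1 \cdots u_k$ be the boundary cycle of $F_0$ (with $u_0$ the cut vertex, or the chosen leaf edge endpoint if $B_1$ is the only non-trivial block), and observe that since $G$ has no $2$-vertices, each boundary vertex $u_i$ not already of degree three inside $B_1$ carries either a leaf edge or a pendant $3$- or $4$-face.

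The core idea is to reduce to a smaller graph and extend a good coloring, exactly as in the preceding lemmas. I would delete the interior structure attached to $F_0$ (the pendant faces and leaf edges on $u_1, \ldots, u_{k-1}$, say), replacing it by a small gadget—most naturally a single ``house-with-chimney'' or a shortened cycle with a chimney $w, w_1, w_2$ attached at $u_0$ and at the far vertex—so as to obtain a strictly smaller subcubic outerplanar graph $G'$ with no $2$-vertices. By the minimality of $G$, $G'$ has a good coloring $f$, which fixes the colors on the edges incident to $u_0$ (and on the opposite attachment vertex) in one of the finitely many patterns permitted by the definition of good coloring. I would then extend $f$ back across the boundary cycle of $F_0$, coloring $u_1u_2, u_2u_3, \ldots$ with a periodic pattern drawn from $\{1, 2_a, 2_b, 2_c, 2_d\}$ (as in Case~1 and Case~2 of the single-face reduction at the start of the proof), and colouring each pendant $3$-face or $4$-face and each leaf edge using the remaining free colours, adjusting the residues mod $3$ as needed to match the boundary endpoints.

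The case analysis splits on the colours that $f$ forces on the edges $v_0u_0, u_0u_1, u_0u_k$ incident to the cut vertex—namely how many of them use the $1$-colour versus a $2$-colour—mirroring exactly the ``$f(u_0v_0)=1$'' versus ``$f(u_0 v_0) \in \{2_a,2_b,2_c,2_d\}$'' split already used for the single-face block. For each case I would specify the starting pattern on $u_1u_2$, the terminating colour on $u_{k-1}u_k$, and the residue class of $k$ modulo $3$, using the same repair tricks (recolouring one boundary edge to $1$ and pushing its old colour onto a pendant edge) that appear throughout Lemma~\ref{122223-outside-1} to handle the residues $k \equiv 0, 1, 2 \pmod 3$ that do not close up cleanly. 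Since $3$- and $4$-faces contribute only one or two extra edges per boundary segment, and we have five non-$3$ colours plus colour $3$ available, there is always enough freedom to colour every pendant face and leaf edge at distance constraints $1$ (matchings) and $2$ (induced matchings).

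The main obstacle, as in the companion lemmas, will be the boundary-closing arithmetic: the periodic pattern on the long boundary cycle of $F_0$ must simultaneously agree with the fixed colours forced by $f$ at \emph{both} ends ($u_0u_1$ and $u_{k-1}u_k$, which both abut edges coloured by $f$), while leaving each pendant $3$- or $4$-face legally colourable and respecting the good-coloring constraint that no degree-one vertex's neighbour sees colour $3$. I expect the delicate sub-cases to be those where $k \equiv 0 \pmod 3$ together with a pendant $4$-face at the last boundary segment, since there the pattern does not naturally terminate and one must recolour a short run near $u_{k-1}$—this is precisely the kind of local surgery carried out in Case~3 of Theorem~\ref{12222-extended} and in Cases~2.2--2.3 of Lemma~\ref{122223-outside-1}, so I would adapt those explicit recolourings rather than invent new ones.
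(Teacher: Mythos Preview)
Your overall strategy---assume $\ell=1$, reduce to a smaller instance, extend a good coloring along the boundary of $F_0$ by a periodic pattern with mod-$3$ repairs---matches the paper. But your reduction is wrong for this situation. When $\ell=1$ every face of $B_1$ other than $F_0$ is adjacent to $F_0$, so once you strip the pendants there is nothing left of $B_1$ beyond its boundary cycle through $u_0$; there is no ``far vertex'' on which to hang an HC gadget. The gadget replacement you describe is the reduction from Lemma~\ref{122223-outside-2}, where $F_2$ is sandwiched between $F_1$ and $F_3$ and the gadget sits on the $F_3$-side edge $v_1v_k$; that picture does not apply here. The paper's reduction is much simpler: delete $u_1,\ldots,u_k$ together with all pendant faces and leaf edges, so that $u_0$ becomes a leaf in $G'$. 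No gadget is needed; the good-coloring hypothesis that the neighbour of a degree-one vertex does not see colour~$3$ then controls what is visible at $u_0$, and the case split is only on whether $f(u_0v_0)=1$ or $f(u_0v_0)\in\{2_a,2_b,2_c,2_d\}$.

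Two further points you are missing. First, the paper treats the sub-case where $F_0$ is itself a triangle ($k=2$) separately and first: with only three boundary edges there is no room for a periodic pattern, and the single pendant $3$- or $4$-face must be coloured by hand. Second, in the generic case $k\ge3$ the paper does \emph{not} split on whether each pendant is a $3$-face or a $4$-face. Instead it replaces every pendant (with its leaf edges) by a full HC graph for the purpose of the extension, since any Type~I/II/III coloring of the HC restricts to a valid coloring of the smaller pendant. This unifies the cases; the colour~$3$ is then placed once, on the edge $u_ju_{j+1}$ beneath the last pendant HC, to absorb the mod-$3$ defect. Without this device you would face a genuine combinatorial explosion of $3$-face/$4$-face configurations along the boundary, which your sketch does not address.
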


\begin{proof}
We first show it is impossible that $\ell = 1$ and $F_2$ (now also $F_0$) is a $3$-face. Suppose not, i.e., $F_2 = F_0$ and $F_2$ has boundary cycle $u_0u_1u_2$, where $u_1,u_2 \in V(F_1) \cap V(F_2)$. In this case, $F_1$ is a $3$-face or a $4$-face by Lemma~\ref{122223-outside-1}. We delete $V(F_1)$ with all its leaf edges from $G$ to obtain a graph $G'$. By the minimality of $G$, $G'$ has a good coloring $f$. By the definition of good coloring, we may assume $f(v_0u_0)$ is either $1$ or $2_a$. In case $F_1$ is a $3$-face, let the boundary cycle of $F_1$ be $u_1u_2u_3$ and $u_3u_3'$ be the leaf edge. If $f(v_0u_0) = 1$, then we can always color $u_0u_1$ and $u_0u_2$ with colors from $\{2_a, 2_b, 2_c, 2_d\}$, say $u_0u_1$ with $2_c$ and $u_0u_2$ with $2_d$. Then we color $u_1u_3, u_2u_3, u_3u_3'$ with $2_a, 2_b, 1$. If $f(v_0u_0) = 2_a$, then we can always color $u_0u_2$ with a color in $\{2_b, 2_c, 2_d\}$, say $2_b$, and color $u_0u_1,u_1u_2, u_1u_3, u_2u_3, u_3u_3'$ with $1,2_d,2_c, 1, 2_a$. This is a contradiction. In case $F_1$ is a $4$-face,, let the boundary cycle of $F_1$ be $u_1u_2u_3u_4$ with two leaf edges $u_3u_3'$ and $u_4u_4'$. If $f(v_0u_0) = 1$, then we can always color $u_0u_1, u_0u_2$ with two colors from $\{2_a, 2_b, 2_c, 2_d\}$, say $2_c,2_d$, and color $u_1u_2, u_1u_4, u_2u_3, u_3u_4, u_3u_3', u_4u_4'$ with $1,2_a,2_b,1,2_c, 2_d$. If $f(v_0u_0) = 2_a$, then we can always color $u_0u_2$ with a color in $\{2_b, 2_c, 2_d\}$, say $2_b$, and color $u_0u_1, u_1u_2, u_1u_4, u_2u_3, u_3u_4, u_3u_3', u_4u_4'$ with $1,3,2_c,1, 2_d, 2_a, 2_b$. This is a contradiction.

Then we complete the proof that $\ell \neq 1$. Suppose not, let the boundary cycle of $F_2$ be $u_0u_1 \ldots u_k$, $k \ge 3$. We delete $u_1, \ldots, u_k$, all pendant faces adjacent to $F_2$, and their leaf edges, to obtain a graph $G'$. By the minimality of $G$, $G'$ has a good coloring $f$. Let $F$ be a pendant face adjacent to $F_2$ with $V(F) \cap V(F_2) = \{u_i,u_{i+1}\}$, $1 \le i \le k-1$. We may treat $F$ as a $4$-face $u_iu_i'u_{i+1}'u_{i+1}u_i$ with an additional pendant face $u_i'u_{i+1}'u_i''$ plus a leaf edge $u_i''w_i$ (neither a $3$-face $u_iu_i'u_{i+1}u_i$ with leaf edge $u_i'u_i''$ nor a $4$-face $u_iu_i'u_{i+1}'u_{i+1}u_i$ with leaf edges $u_i'u_i'', u_{i+1}'u_{i+1}''$) when we extend the coloring $f$ to $G$, since we can always use the coloring of the HC graph (induced by $w_i, u_{i-1}, u_i, u_i', u_i'', u_{i+1}, u_{i+1}', u_{i+1}'', u_{i+2}$) on $F$ and its leaf edges. Note that we always color $u_iu_i', u_{i+1}u_{i+1}'$ with different colors and can use the color used on $u_i'u_{i+1}'$ (of the HC graph) to color the leaf edge $u_i'u_i''$ of a $3$-face. Let $F_1'$ be the pendant face adjacent to $F_2$ such that $V(F_1') \cap V(F_2)$ has the largest indices, say $V(F_1') \cap V(F_2) = \{u_j,u_{j+1}\}$, where $1 \le j \le k-1$. We know $k \ge 3$ and $f(u_0v_0) = 1$ or $f(u_0v_0) \in \{2_a, 2_b, 2_c, 2_d\}$, say $f(u_0v_0) = 2_c$. We can first color every leaf edge hang on $F_2$ with $1$. 

\textbf{Case A:} $f(u_0v_0) = 1$. We can always color $u_0u_1, u_0u_k$ with two colors from $\{2_a, 2_b, 2_c, 2_d\}$, say $2_a,2_b$. If $k \equiv 0 \text{ (or 2)} \text{ }(\text{mod } 3)$, then we use the pattern $[2_c,2_d,2_a]$ to color the edges from $u_1u_2$ to $u_{k-1}u_k$. We color $u_ju_j'$ with $f(u_ju_{j+1})$, recolor $u_ju_{j+1}$ to $3$, and color $u_{j+1}u_{j+1}', u_j'u_{j+1}', u_j'u_j'', u_{j+1}'u_j'', u_j''w_j$ with $1,2_a (2_d), 1, f(u_{j-1}u_j), 2_b$ when $j = k-1$ and with $1,2_b, 1, f(u_{j-1}u_j), f(u_{j+1}u_{j+2})$ when $j \le k-2$. If $k \equiv 1 \text{ }(\text{mod } 3)$, then we use the pattern $[2_c,2_d,2_a]$ to color the edges from $u_1u_2$ to $u_{k-1}u_k$ except $u_ju_{j+1}$, and color $u_ju_{j+1}, u_ju_j', u_{j+1}u_{j+1}', u_j'u_{j+1}', u_j'u_j'', u_{j+1}'u_j'', u_j''w_j$ with $3,2_a,1,2_c,1,2_d, 2_b$ when $j = k-1$ and color $u_ju_{j+1}, u_ju_j', u_{j+1}u_{j+1}', u_j'u_{j+1}',$ $ u_j'u_j'', u_{j+1}'u_j'', u_j''w_j$ with $3,1,x,f(u_{j+2}u_{j+3}),$ $f(u_{j+1}u_{j+2}),1,y$ when $j \le k-2$, where $x \notin \{f(u_{j-1}u_{j}), f(u_{j+1}u_{j+2}),f(u_{j+2}u_{j+3}), 1,3\}$ and $y \notin \{f(u_j'u_{j+1}'), f(u_j'u_j''),x, 1,3\}$. For other HC graphs on $u_i, u_{i+1}$, where $1 \le i \le k-3$ and $i \neq j$, we color $u_iu_i'$ with $f(u_iu_{i+1})$, recolor $u_iu_{i+1}$ to $1$, and color $u_{i+1}u_{i+1}', u_i'u_{i+1}', u_i'u_i'', u_{i+1}'u_i'', u_i''w_i$ with $2_b,1, f(u_{i+1}u_{i+2}), f(u_{i-1}u_i),1$. 

\textbf{Case B:} $f(u_0v_0) = 2_c$. We may assume $1 \in \{f(v_0v_0'), f(v_0v_0'')\}$ and thus can color $u_0u_1, u_0u_k$ with two colors from $\{2_a, 2_b, 2_c, 2_d\}$, say $2_a,2_b$. If $k \equiv 2 \text{ }(\text{mod } 3)$, then we use the same argument in \textbf{Case A}. If $k \equiv 0 \text{ }(\text{mod } 3)$, then we use the pattern $[2_d,2_c,2_a]$ to color the edges from $u_1u_2$ to $u_{k-1}u_k$ except $u_ju_{j+1}$, color $u_ju_{j+1}, u_ju_j', u_{j+1}u_{j+1}', u_j'u_{j+1}', u_j'u_j'',$ $ u_{j+1}'u_j'', u_j''w_j$ with $3,2_c,1,2_a,1,2_d, 2_b$ when $j = k-1$, with $3,1,2_c,2_b,2_d,1,2_a$ when $j = k-2$, and with $3,2_b,1,$ $f(u_{j+2}u_{j+3}),1,f(u_{j-1}u_j), f(u_{j+1}u_{j+2})$ when $j \le k-3$. If $k \equiv 1 \text{ }(\text{mod } 3)$, then we use the pattern $[2_d,2_c,2_a]$ to color the edges $u_1u_2$ until $u_{k-2}u_{k-1}$, and color $u_ju_{j+1}, u_ju_j', u_{j+1}u_{j+1}', u_j'u_{j+1}', u_j'u_j'', u_{j+1}'u_j'', u_j''w_j$ with $3,2_a,1,2_d,1,2_c, 2_b$ when $j = k-1$ and color $u_ju_j'$ with $f(u_ju_{j+1})$, recolor $u_{k-1}u_k, u_0u_k, u_ku_k'$ with $2_b, 1, 2_d$, color $u_{j+1}u_{j+1}', u_j'u_{j+1}', u_j'u_j'', u_{j+1}'u_j'', u_j''w_j$ with $1, x, 1, f(u_{j-1}u_j), y$ when $j \le k-2$, where $x \notin \{1,3,f(u_{j-1}u_j), f(u_ju_{j+1}), f(u_{j+1}u_{j+2})\}$ and $y \notin \{f(u_ju_j'), f(u_{j+1}'u_j''),x,$ $ 1,3\}$, and recolor $u_ju_{j+1}$ with $3$. For other HC graphs on  $u_i, u_{i+1}$, where $1 \le i \le k-3$ and $i \neq j$, we color $u_iu_i'$ with $f(u_iu_{i+1})$, recolor $u_iu_{i+1}$ to $1$, and color $u_{i+1}u_{i+1}', u_i'u_{i+1}', u_i'u_i'', u_{i+1}'u_i'', u_i''w_i$ with $2_b,1, f(u_{i+1}u_{i+2}), f(u_{i-1}u_i),1$.    
\end{proof}

Then we show $F_2$ must be a $4$-face when $\ell \ge 2$.

\begin{lemma}\label{122223-outside-2}
$F_2$ is a $4$-face. 
\end{lemma}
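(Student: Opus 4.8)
The plan is to use the same minimal-counterexample surgery as in Lemmas~\ref{lem12222:outside_two_layer} and~\ref{122223-outside-1}. I first note that $F_2$ cannot be a $3$-face. Since $\ell\ge 2$, the path $P$ runs $F_1F_2F_3\cdots$ (with $F_3$ read as $F_0$ when $\ell=2$), so $F_2$ is adjacent in the weak dual to two distinct faces and hence shares two of its edges with them. If $F_2$ were a triangle these two edges would meet at a vertex $x$; then $F_1$, $F_2$, and the second neighbour all meet at $x$, and as $F_1$ and the second neighbour are non-adjacent the four edges around $x$ are distinct, forcing $\deg(x)\ge 4$ and contradicting subcubicity. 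It therefore remains to derive a contradiction under the assumption that $F_2$ is a $k$-face with $k\ge 5$.

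For the reduction, let the boundary cycle of $F_2$ be $v_1v_2\cdots v_k$ with $v_1v_k$ the edge shared with $F_3$ (or $F_0$), and put $N(v_1)=\{u_1,v_2,v_k\}$ and $N(v_k)=\{u_k,v_1,v_{k-1}\}$. By Lemma~\ref{122223-outside-1} every pendant face on $v_2,\ldots,v_{k-1}$ is a $3$- or $4$-face, and the no-$2$-vertex hypothesis supplies the attached leaf edges. I delete $v_2,\ldots,v_{k-1}$ and all pendant faces adjacent to $F_2$ (together with their leaf edges), and glue a house-with-chimney along $v_1v_k$: add $p,q,r,s$ and edges $v_1p,pq,qv_k,pr,qr,rs$, so that $v_1pqv_k$ is the body $4$-face with base $v_1v_k$, $pqr$ the roof, and $rs$ the chimney. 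The result $G'$ is a subcubic outerplanar graph with no $2$-vertices and $|V(G')|+|E(G')|<|V(G)|+|E(G)|$, hence has a good coloring $f$ by minimality. Because the glued HC graph meets the rest of $G'$ only along $v_1v_k$, good coloring forces $f$ on it to be of Type I, II, or III, and reading off the type determines $f(v_1v_k)$ together with the two leg colours $f(u_1v_1),f(u_kv_k)$, exactly as in Lemma~\ref{lem12222:outside_two_layer}.

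I then extend $f$ to $G$ by colouring the boundary edges $v_1v_2,\ldots,v_{k-1}v_k$ and every pendant structure on $v_2,\ldots,v_{k-1}$ --- precisely the job carried out in Lemma~\ref{122223-two-layers}. Starting from the already-fixed colours at $v_1$ and $v_k$, I run a period-three pattern of $2_a,2_b,2_c,2_d$ along the boundary, split according to $k\bmod 3$, recolour the last one or two boundary edges to absorb the residue, and colour each attached $3$-/$4$-face (or HC graph) by reusing the adjacent boundary colours together with colours $1$ and $3$, always keeping colour $3$ off the neighbour of any degree-one vertex. This produces a good coloring of $G$, contradicting the choice of $G$ and finishing the proof.

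The main obstacle is the size of the extension analysis rather than any single difficulty. The boundary data returned by $G'$ branches on which HC-type occurs and on whether colours $1$ and $3$ appear among $u_1v_1,v_1v_k,u_kv_k$, and this must be crossed with $k\bmod 3$ and with the possibilities (bare leaf edge, pendant $3$-face, pendant $4$-face) at each interior vertex. Verifying in every branch that the distance constraints for $2_a,\ldots,2_d$, the stricter constraint for colour $3$, and the degree-one condition of good coloring all hold simultaneously is where essentially all the work lies; the surgery and the reading-off step are routine.
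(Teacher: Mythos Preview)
Your reduction is exactly the paper's: glue an HC graph along $v_1v_k$, invoke minimality for a good coloring of $G'$, and (since the house-without-base is not itself a block) read off that the glued HC must be of Type~I, II, or III. But the proposal stops short of a proof. You explicitly defer the entire extension case-analysis, and that is where all the content lies: the paper splits on the HC-type (three cases), then on $k\bmod 3$, and within each case gives explicit colourings of the boundary path $v_1v_2,\ldots,v_{k-1}v_k$ and of every attached HC graph (always choosing one of Types~I--III so that the result is again \emph{good}), checking the distance constraints for the $2$-colours and for colour~$3$ as well as the degree-one condition. None of this is carried out here, so what you have is a correct outline, not a proof.

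One concrete error: the extension is not ``precisely the job carried out in Lemma~\ref{122223-two-layers}''. In that lemma the face being coloured hangs off a single cut-vertex $u_0$ with one external edge $u_0v_0$, and the case split is on $f(u_0v_0)\in\{1,2_*\}$. Here the face shares an \emph{edge} $v_1v_k$ with $F_3$, so both endpoints carry external edges $u_1v_1,u_kv_k$ whose colours are already fixed by the HC-type before you start the boundary pattern. The boundary constraints --- and hence the admissible starting colours for $v_1v_2$ and $v_{k-1}v_k$ and the $k\bmod 3$ residue arguments --- are genuinely different, so the analysis of Lemma~\ref{122223-two-layers} cannot be lifted verbatim; the paper writes out a separate (and differently organised) case analysis for this lemma.
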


\begin{proof}
By Lemma~\ref{122223-two-layers}, $\ell \ge 2$. Suppose $F_2$ is a $k$-face, where $k \ge 5$. Let the boundary cycle of $F_2$ be $v_1 \ldots v_k$ with $V(F_3) \cap V(F_2) = \{v_1, v_k\}$. We delete $v_2, \ldots, v_{k-1}$ and all pendant faces hang on $F_2$ with all their leaf edges, and add vertices $z,z_1,z_2,z_k$ with edges $z_1v_1,z_kv_k,z_1z_k,z_1z_2,z_2z_k,z_2z$ (HC graph) to obtain a graph $G'$. By the minimality of $G$, $G'$ has a good coloring $f$. We aim to extend $f$ to $G'$. Let $F$ be a pendant face adjacent to $F_2$ with $V(F) \cap V(F_2) = \{v_i,v_{i+1}\}$, $2 \le i \le k-1$. We may assume $F$ is a $4$-face $v_iv_i'v_{i+1}'v_{i+1}v_i$ with leaf edges $v_i'v_i'', v_{i+1}'v_{i+1}''$ for the same reason as we discussed in the second paragraph of the proof of Lemma~\ref{122223-two-layers}. Furthermore, we may assume $v_i''=v_{i+1}''$ and add a new vertex $w_i$ with the edge $w_iv_i''$. In this way, we are coloring a larger graph than $G'$ and can use the proof in the future. Let $F_1'$ be the pendant face adjacent to $F_2$ such that $V(F_1') \cap V(F_2)$ has the largest indices, say $V(F_1') \cap V(F_2) = \{v_j,v_{j+1}\}$, where $2 \le j \le k-2$. Since there are no $2$-vertices, each $v_i$ with no pendant faces on top has a leaf neighbour $v_i'$, $2 \le i \le k-1$. Let $N(v_1) = \{u_1, v_2, v_k\}$ and $N(v_k) = \{u_k,v_1, v_{k-1}\}$. 

\textbf{Case 1:} The HC graph uses type I coloring, i.e., $u_1v_1,v_1v_k,v_ku_k,v_1z_1,v_kz_k,z_1z_k,z_1z_2,z_2z_k,z_2z$ are colored with $2_a, 1, 2_b, 2_c, 2_d, 1, 2_b, 2_a, 1$. We color $v_1v_2, v_{k-1}v_k$ with $2_c,2_d$ and all leaf edges $v_iv_i'$ with $1$. 

\textbf{Case 1.1:} $k \equiv 2 \text{ }(\text{mod } 3)$. Then we use the pattern $[2_b,2_a,2_c]$ to color the edge $v_2v_3$ until the edge $v_{k-2}v_{k-1}$. If there is an HC graph on $v_{k-2},v_{k-1}$,  then we color $v_{k-2}v_{k-1}, v_{k-2}v_{k-2}', v_{k-1}v_{k-1}', v_{k-2}'v_{k-1}', v_{k-2}'v_{k-2}'', v_{k-1}'v_{k-2}'', v_{k-2}''w_{k-2}$ with $1, 2_a, 2_c, 1, 2_d, 2_b, 1$. For other HC graphs on $v_{i},v_{i+1}$, where $2 \le i \le k-3$, we color $v_{i+1}v_{i+1}'$ with $f(v_iv_{i+1})$, recolor $v_iv_{i+1}$ with $1$, color $v_iv_i', v_i'v_{i+1}', v_i'v_i'', v_{i+1}'v_i'', v_i''w_i$ with $2_d, 1, f(v_{i+1}v_{i+2}), f(v_{i-1}v_{i}), 1$.

\textbf{Case 1.2:} $k \equiv 1 \text{ }(\text{mod } 3)$. Since $k \ge 5$, $k \ge 7$. Then we use the pattern $[2_b,2_a,2_c]$ to color the edge $v_2v_3$ until the edge $v_{k-2}v_{k-1}$ except $v_jv_{j+1}$. If $j = k-2$, then we color $v_{k-2}v_{k-1}, v_{k-2}v_{k-2}', v_{k-1}v_{k-1}', v_{k-2}'v_{k-1}', v_{k-2}'v_{k-2}'', v_{k-1}'v_{k-2}'', v_{k-2}''w_{k-2}$ with $1, 2_b, 2_a, 1, 2_d, 2_c, 1$. If $3 \le j \le k-3$, then we color  $v_iv_{i+1},v_iv_i', v_{i+1}v_{i+1}', v_i'v_{i+1}', v_i'v_i'', v_{i+1}'v_i'', v_i''w_i$ with $3, 2_d, 1, $ $ f(v_{j-2}v_{j-1}), 1, f(v_{j-1}v_j), f(v_{j+1}v_{j+2})$. If $j = 2$, then we use the pattern $[2_a,2_b,2_d]$ to color the edge $v_{k-2}v_{k-1}$ until the edge $v_3v_4$, and color $v_2v_3, v_2v_2', v_3v_3', v_2'v_3', v_2'v_2'', v_2''v_3', v_2''w_2$ with $1, 2_b, 2_a, 1, 2_d, 2_c, 1$. For other HC graphs on $v_{i},v_{i+1}$, where $2 \le i \le k-3$ and $i \neq j$, we color $v_{i+1}v_{i+1}'$ with $f(v_iv_{i+1})$, recolor $v_iv_{i+1}$ with $1$, color $v_iv_i', v_i'v_{i+1}', v_i'v_i'', v_{i+1}'v_i'', v_i''w_i$ with $2_d, 1, f(v_{i+1}v_{i+2}), f(v_{i-1}v_{i}), 1$.

\textbf{Case 1.3:} $k \equiv 0 \text{ }(\text{mod } 3)$. Since $k \ge 5$, $k \ge 6$. We use the pattern $[2_d, 2_a, 2_c]$ to color the edge $v_2v_3$ until $v_{k-2}v_{k-1}$. If there is a HC on $v_{i},v_{i+1}$, where $2 \le i \le k-2$, we color $v_{i+1}v_{i+1}'$ with $f(v_iv_{i+1})$, recolor $v_iv_{i+1}$ with $1$, color $v_iv_i', v_i'v_{i+1}', v_i'v_i'', v_{i+1}'v_i'', v_i''w_i$ with $2_b, 1, f(v_{i+1}v_{i+2}), f(v_{i-1}v_{i}), 1$.


\textbf{Case 2:} The HC graph uses type II coloring, i.e., $u_1v_1,v_1v_k,v_ku_k,v_1z_1,v_kz_k,z_1z_k,z_1z_2,z_2z_k,z_2z$ are colored with $2_a, 3, 2_b, 2_c, 1, 2_d, 2_b, 2_a, 1$. We color $v_1v_2, v_{k-1}v_k$ with $2_c, 1$, all leaf edges $v_iv_i'$ with $1$ except $v_{k-1}v_{k-1}'$ (if it is a leaf), and use the pattern $[2_d, 2_a, 2_c]$ to color the edge $v_2v_3$ until $v_{k-2}v_{k-1}$ together with $v_{k-1}v_{k-1}'$ (if it is a leaf). Since $k \ge 5$, if there is an HC graph on $v_{k-2}, v_{k-1}$, then we color $v_{k-1}v_{k-1}'$ with $f(v_{k-2}v_{k-1})$, recolor $v_1v_k, v_{k-1}v_k, v_{k-2}v_{k-1}$ with $1,3,1$, and color $v_{k-2}v_{k-2}', v_{k-2}'v_{k-1}', v_{k-2}'v_{k-2}'', v_{k-1}'v_{k-2}'', v_{k-2}''w_{k-2}$ with $2_b, 1, f(v_{k-4}v_{k-3}), $ $f(v_{k-3}v_{k-2}), 1$. For other HC graphs on $v_{i}, v_{i+1}$, $2 \le i \le k-3$, we color $v_{i+1}v_{i+1}'$ with $f(v_iv_{i+1})$, recolor $v_iv_{i+1}$ with $1$, color $v_iv_i', v_i'v_{i+1}', v_i'v_i'', v_{i+1}'v_i'', v_i''w_i$ with $2_b, 1, f(v_{i+1}v_{i+2}), f(v_{i-1}v_{i}), 1$.

\textbf{Case 3:} The HC graph uses type III coloring, i.e., $u_1v_1,v_1v_k,v_ku_k,v_1z_1,v_kz_k,z_1z_k,z_1z_2,z_2z_k,z_2z$ are colored with $2_c, 1, 3, 2_a, 2_b, 1, 2_d, 2_c, 1$. We color $v_1v_2, v_{k-1}v_k$ with $2_a,2_b$, all leaf edges $v_iv_i'$ with $1$, and use the pattern $[2_d, 2_c, 2_a]$ to color the edge $v_2v_3$ until $v_{k-2}v_{k-1}$. Since $k \ge 5$, if there is an HC graph on $v_{k-2}, v_{k-1}$, then we color $v_{k-2}v_{k-2}'$ with $f(v_{k-2}v_{k-1})$, recolor $v_{k-2}v_{k-1}$ with $1$, and color $v_{k-1}v_{k-1}', v_{k-2}'v_{k-1}', v_{k-2}'v_{k-2}'', v_{k-1}'v_{k-2}'', v_{k-2}''w_{k-2}$ with $f(v_{k-4}v_{k-3}), 1, 2_b,$ $ f(v_{k-3}v_{k-2}), 1$. For other HC graphs on $v_{i}, v_{i+1}$, $2 \le i \le k-3$, we color $v_{i+1}v_{i+1}'$ with $f(v_iv_{i+1})$, recolor $v_iv_{i+1}$ with $1$, color $v_iv_i', v_i'v_{i+1}', v_i'v_i'', v_{i+1}'v_i'', v_i''w_i$ with $2_b, 1, f(v_{i+1}v_{i+2}), f(v_{i-1}v_{i}), 1$. 
\end{proof}


\begin{rem}\label{122223-outside-2-rem}
Additionally, in Lemma~\ref{122223-outside-2}, if $F_2$ is a $4$-face, then $F_1$ must be a $3$-face. Suppose not, i.e., $F_1$ is a $4$-face $v_1v_2v_3v_4v_1$ with leaf edges $v_2v_2',v_3v_3'$, where $v(F_1) \cap V(F_2) = \{v_1, v_4\}$. Let the boundary cycle of $F_2$ be $v_1'v_1v_4v_4'v_1'$, $N(v_1') = \{v_1, v_1'', v_4\}$, and $N(v_4') = \{v_1', v_4, v_4''\}$. We delete $v_2,v_2',v_3,v_3'$ and add a vertex $v,v'$ with edges $vv_1, vv_4, vv'$ to obtain a graph $G'$. By the minimality of $G$, $G'$ has a good coloring $f$. If the HC graph (induced by $v,v',v_1,v_1', v_1'', v_4, v_4', v_4''$) uses type I coloring (type III coloring), say $v_1'v_1'', v_1'v_4',v_4'v_4'', v_1v_1', v_4v_4', v_1v_4, vv_1,vv_4, vv_1$ uses $2_a (3), 1, 2_b, 2_c, 2_d, 1, 2_b, 2_a, 1$, then we color $v_1v_2, v_3v_4, v_2v_3, v_2v_2', v_3v_3'$ with $2_b, 2_a, 1, 2_d, 2_c$. If the HC graph uses type II coloring, say $v_1'v_1'', v_1'v_4',v_4'v_4'',$ $v_1v_1', v_4v_4', v_1v_4, vv_1,vv_4, vv_1$ uses $2_a, 3, 2_b, 1, 2_c, 2_d, 2_b, 1, 2_a$, then we recolor $v_1'v_4', v_1v_1', v_1v_4$ with $1,3,1$, and color $v_1v_2,$ $ v_3v_4, v_2v_3, v_2v_2', v_3v_3'$ with $2_b, 2_a, 1, 2_c, 2_d$.
\end{rem}
    
Now, we complete the proof. By Lemma~\ref{122223-two-layers}, $\ell \ge 2$. We split the proof into two cases.

\textbf{Case i:} $\ell = 2$. By Lemma~\ref{122223-outside-2} together with Remark~\ref{122223-outside-2-rem}, $B_1$ can be described as $F_0$ with a few leaf edges, $3$-faces with a leaf edge, $4$-faces with two leaf edges, and HC graphs attached on it. We can use the same argument used in the proof of Lemma~\ref{122223-two-layers}, since we treated every $3$-face with a leaf edge and every $4$-face with two leaf edges as an HC graph in the coloring procedure.

\textbf{Case ii:} $\ell \ge 3$. Let $V(F_3) \cap V(F_4) = \{v_1, v_k\}$. We delete $V(F_3) - \{v_1, v_k\}$, all leaf edges, all $3$-faces with a leaf edge, all $4$-faces with two leaf edges, all HC graphs attached on $F_3$, and add vertices $z,z_1,z_2,z_k$ with edges $z_1v_1,z_kv_k,z_1z_k,z_1z_2,z_2z_k,z_2z$ (HC graph) to obtain a graph $G'$. By the minimality of $G$, $G'$ has a good coloring $f$. We can use the argument used in the proof of Lemma~\ref{122223-outside-2}, since we treated every $3$-face with a leaf edge and every $4$-face with two leaf edges as an HC graph in the color-extending procedure. \hfill \qed

\section{$(1^2,2^k)$-packing edge-colorings of subcubic outerplanar graphs}

In this section, we focus on $(1^2,2^k)$-colorings of subcubic outerplanar graphs. We first show in Example~\ref{ex1} that there are subcubic outerplanar graphs that are not $(1^2,2^2)$-colorable. Note that our example is also $2$-connected.

\begin{example}\label{ex1}
Let the graphs $G_0$ and $G_1$ be the graphs shown in Fig.~\ref{example_2}. The graph $G_1$ is subcubic, outerplanar, and even $2$-connected. We show $G_1$ has no $(1^2,2^2)$-packing edge-coloring.
\end{example}

\begin{figure}[ht]
 \vspace{-8mm}
\begin{center}
  \includegraphics[scale=0.6]{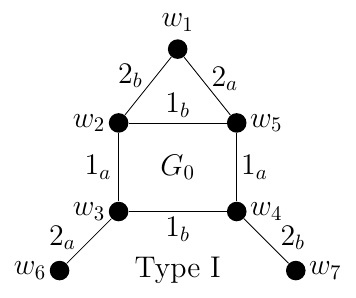} \hspace{8mm}
  \includegraphics[scale=0.6]{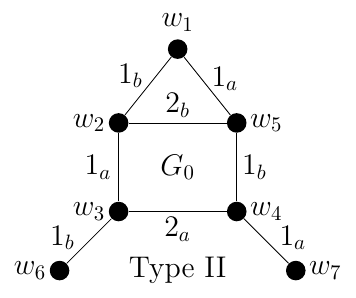} \hspace{8mm}
  \includegraphics[scale=0.53]{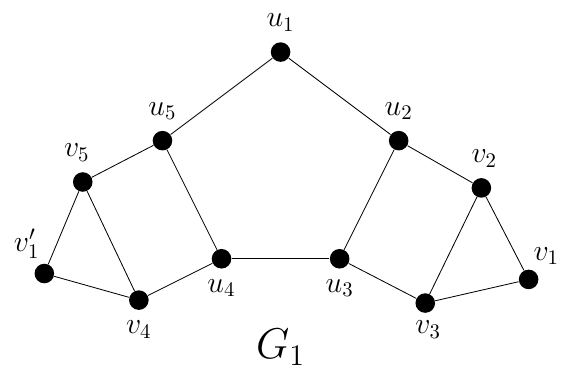} 
 \vspace{-3mm}
\caption{A subcubic outerplanar graph that is not $(1^2,2^2)$-packing edge-colorable.}\label{example_2}
\end{center}
\vspace{-8mm}
\end{figure}

\begin{proof}
We first show graph $G_0$ has exactly two different $(1^2,2^2)$-packing edge-coloring up to symmetry. 

\begin{claim}\label{house1122}
Graph $G_0$ has exactly two different $(1^2,2^2)$-packing edge-coloring up to symmetry.
\end{claim}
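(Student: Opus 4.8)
The plan is to exploit the rigidity forced by the triangular roof of $G_0$ together with the severe restriction that the two induced-matching colors must satisfy. First I would fix notation: write the two matching colors as $1_a,1_b$ and the two induced-matching colors as $2_a,2_b$, recalling that two edges may share a matching color only if they are disjoint, and may share an induced-matching color only if they are non-adjacent and not joined by an edge (distance at least $3$). I would begin by recording the local structure: the three mutually adjacent roof edges of the house must receive three pairwise distinct colors. Then I would compute, once and for all, which pairs of edges of $G_0$ lie at distance at least $3$. The outcome I expect from this computation is that the base edge (shared between the roof triangle and the square body) lies within distance $2$ of every other edge, so that if it carries a $2$-color then that color is used nowhere else, and that there are only two pairs of edges forming an induced matching of size two, which are moreover interchanged by the reflection automorphism of the house.

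With this dictionary in hand, I would run a forced-propagation argument. After using the reflection symmetry (together with the relabelings $1_a\leftrightarrow 1_b$ and $2_a\leftrightarrow 2_b$) to normalize the coloring of the roof, the requirement that the three roof edges get distinct colors, combined with the short list of admissible $2$-color classes, leaves only a handful of ways to place the two $2$-colors. In each branch I would propagate: every edge adjacent to already-colored edges loses those colors, and the bottom vertices — each of degree three, incident to a square edge, the base or bottom edge, and a pendant edge — typically force the remaining colors uniquely. I expect exactly two branches to survive. In the first, both $2$-colors are realized as size-two induced matchings on the two symmetric ``roof-edge / opposite-pendant'' pairs, and the remaining four edges of the square are colored by $1_a,1_b$ as a properly $2$-edge-colored $4$-cycle. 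In the second, both $2$-colors appear as singletons on the two edges that lie in no induced matching of size two (the base edge and the bottom edge of the square), and the six remaining edges split into two matchings. Every other branch should collapse to a contradiction, most often because some bottom vertex is left with three mutually adjacent edges none of which can take the last available color.

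The main obstacle I anticipate is the completeness and bookkeeping of the case analysis rather than any single hard idea: I must be certain I have enumerated every placement of the three roof colors and the two $2$-color classes, and that in each case I reach one of the two target colorings or a genuine contradiction. The symmetry reduction is what keeps this tractable, so I would state precisely which group I am quotienting by before counting, and then verify that the two surviving colorings are genuinely inequivalent under it — one uses induced matchings of size two while the other does not, so they cannot be identified. Finally I would check directly that both candidates are valid $(1^2,2^2)$-colorings, which is a short verification, so that the argument establishes both ``at most two'' and ``at least two,'' i.e.\ exactly two colorings up to symmetry.
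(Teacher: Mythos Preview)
Your plan is sound and would yield a correct proof; the two surviving colorings you describe are exactly the paper's Type~I and Type~II, and your computation that the only distance-$\ge 3$ pairs are $\{w_1w_2,w_4w_7\}$ and $\{w_1w_5,w_3w_6\}$ (while $w_2w_5$ and $w_3w_4$ lie in none) is correct.

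The paper's proof is the same kind of forced-propagation case analysis but is anchored differently. Rather than normalizing the roof and casing on the global placement of the $2$-color classes, it first notes that since the roof triangle $w_1w_2w_5$ needs three distinct colors, one cannot have $\{2_a,2_b\}\subseteq\{f(w_2w_3),f(w_3w_4),f(w_4w_5)\}$, and then cases directly on the coloring of the bottom path $w_2w_3,w_3w_4,w_4w_5$. Up to symmetry this gives four patterns ($1_a1_b1_a$, $1_a2_a1_a$, $1_a1_b2_a$, $1_a2_a1_b$); two are killed in one line, and the other two propagate uniquely to Type~I and Type~II. Your approach has the conceptual advantage that the precomputed list of distance-$\ge 3$ pairs explains \emph{a priori} why the $2$-color classes in Type~I land on precisely the two roof-edge/opposite-pendant pairs, and it makes the ``at most two'' half essentially a counting argument on induced matchings. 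The paper's version is marginally shorter because it skips that global distance enumeration and pivots on the three-edge path instead of the triangle, but the two arguments are of comparable length and difficulty.
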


\begin{proof}
Let $f$ be a $(1^2,2^2)$-packing edge-coloring of $G_0$. Since the triangle $w_1w_2w_5$ needs three distinct colors, $\{2_a, 2_b\} \nsubseteq \{f(w_2w_3), f(w_3w_4), f(w_4w_5)\}$. Therefore, the colors of $w_2w_3, w_3w_4, w_4w_5$ can be $1_a, 1_b, 1_a$ or $1_a, 2_a, 1_a$ or $1_a, 1_b, 2_a$ or $1_a, 2_a, 1_b$ up to symmetry. If it is $1_a, 1_b, 2_a$, then $f(w_3w_6)$ must be $2_b$ and we cannot color $w_1w_2w_5$, which is a contradiction. If it is $1_a, 2_a, 1_a$, then there are only two available colors ($\{1_b, 2_b\}$) for $w_1w_2w_5$, which is again a contradiction. Therefore, the colors used on $w_2w_3, w_3w_4, w_4w_5$ must be $1_a, 1_b, 1_a$ or $1_a, 2_a, 1_b$. Furthermore, if the colors are $1_a, 1_b, 1_a$, then $f(w_3w_6) = 2_a$ and $f(w_4w_7) = 2_b$, and $f(w_2w_5) = 1_b, f(w_1w_5) = 2_a$, and $f(w_1w_2) = 2_b$. We call this the \textbf{Type I} coloring of $G_0$. If the colors are $1_a, 2_a, 1_b$, then $w_2w_5$ must use $2_b$ and thus $w_1w_2, w_1w_5$ must use $1_b, 1_a$. Hence, $w_3w_6, w_4w_7$ must use $1_b,1_a$. We call this the \textbf{Type II} coloring of $G_0$.    
\end{proof}

We are now ready to show $G_1$ has no $(1^2,2^2)$-packing edge-coloring. Suppose not, i.e., $G_1$ has a $(1^2,2^2)$-packing edge-coloring $f$. Since $G_1$ contains two copies of $G_0$ and they share the edge $u_3u_4$, they must be the same type. If both copies are of Type I, then there are three edges in the $5$-cycle that are colored with a color in $\{2_a,2_b\}$. This is a contradiction. If both copies are of Type II, say $f(u_2u_3) = 2_a, f(u_1u_2) = 1_a, f(u_3u_4) = 1_b$, then it implies $f(u_4u_5) = 2_b$ and $f(u_1u_5) = 1_a$. This is again a contradiction.
\end{proof}

Inspired by Conjecture~\ref{main-conj}, we explore the question ``What is the largest $k_2$ such that every subcubic outerplanar graph is $(1^2,2^2,k_2)$-packing edge-colorable?''. We prove in Section~\ref{1122k} that $k_2 \ge 3$ and $k_2 \le 4$. 

\begin{theorem}
Every subcubic outerplanar graph is $(1^2,2^2,3)$-packing edge-colorable. 
\end{theorem}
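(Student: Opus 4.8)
The plan is to follow the same discharging-by-minimal-counterexample strategy that has powered the previous theorems in this section, but now with the richer palette of colors $1_a, 1_b, 2_a, 2_b, 3$. Suppose the statement fails and take a subcubic outerplanar graph $G$ minimizing $|V(G)|+|E(G)|$ that is not $(1^2,2^2,3)$-colorable. As was done for Theorem~\ref{122223-main-theorem}, I would first reduce to graphs without $2$-vertices by attaching leaf edges, and then argue that trees hanging off cycles must be short (single leaf edges), since at any low-degree configuration the five ``cheap'' colors $1_a,1_b,2_a,2_b$ together with a limited use of $3$ give enough freedom to extend a good coloring of $G'$. The role played here by the ``house'' gadget $G_0$ (with its Type~I and Type~II $(1^2,2^2)$-colorings from Claim~\ref{house1122}) is analogous to the HC-graph gadget before: I would fix a notion of \emph{good coloring} in which every copy of $G_0$ (or the appropriate extremal gadget) receives one of the admissible types, so that the inductive hypothesis controls the colors seen along the shared boundary.

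The heart of the argument is the structural analysis of the ``outermost two layers'' of a leaf block, exactly as in Lemmas~\ref{lem12222:outside_two_layer} and~\ref{122223-outside-2}. I would pick a non-trivial leaf block $B_1$, let $P = F_0 F_\ell \cdots F_2 F_1$ be a longest path in its weak dual, and prove in sequence: (i) every pendant face other than $F_0$ is a $3$-face or a $4$-face (large faces can be contracted and re-expanded using a short periodic pattern such as $[1_a,1_b,2_a]$ with the $1$-colors distributed so no monochromatic pair of $2$-edges falls within distance two); (ii) the path $P$ has at least two internal faces; and (iii) the second face $F_2$ adjacent to the pendant configuration is a $4$-face, with $F_1$ then forced to be a triangle. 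Each of these is a delete-and-reglue reduction: remove the interior vertices of the relevant face together with the pendant gadgets hanging on it, attach a small replacement gadget, invoke minimality to get a good coloring $f$, and extend $f$ by an explicit case analysis on which colors appear on the two boundary edges $u_1v_1$ and $v_ku_k$ and on $v_1v_k$.

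The color-extension step splits into cases according to $k \bmod 3$ for the face being re-expanded and according to how many of the three relevant boundary edges carry a $1$-color (equivalently a $2$-color or color $3$), mirroring the Case~1/Case~2 structure in the earlier lemmas. I expect the main obstacle to be the bookkeeping in the $k \equiv 1 \pmod 3$ and $k \equiv 0 \pmod 3$ subcases, where the naive periodic pattern leaves a defect near the far end of the boundary and one must locally recolor using the scarce color $3$ on a single well-chosen edge; this is precisely where having \emph{two} $1$-colors rather than one changes the parity arithmetic, so the patterns from Theorem~\ref{122223-main-theorem} must be redesigned rather than reused. Once all pendant faces are triangles or $4$-faces sitting on a $4$-face $F_2$, the final assembly is the same two-case argument ($\ell = 2$ versus $\ell \ge 3$) as before: treat each $3$-face-with-leaf and each $4$-face-with-leaves as an instance of the $G_0$-type gadget, delete $F_3$'s interior, re-glue the gadget, and propagate the coloring, contradicting the assumed non-colorability of $G$ and thereby completing the proof.
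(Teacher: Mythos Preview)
Your high-level strategy matches the paper's almost exactly: minimal counterexample, reduce to no $2$-vertices, prune hanging trees to single leaves, pick a leaf block $B_1$, analyse a longest path $P=F_0F_\ell\cdots F_2F_1$ in the weak dual, prove structural lemmas about the outermost layers, and finish with the $\ell=2$ versus $\ell\ge3$ split. So the skeleton is right.

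However, several of the load-bearing technical choices are off. First, the inductive gadget is not the house $G_0$ of Example~\ref{ex1} with its two $(1^2,2^2)$-colorings from Claim~\ref{house1122}; those colorings do not involve the color $3$ at all and cannot control how $3$ sits near the interface. The paper instead uses the house-with-chimney graph (the HC graph of Fig.~\ref{house-with-chimney-2}) and defines ``good coloring'' via \emph{five} admissible types I--V, each a genuine $(1^2,2^2,3)$-coloring. The case analysis in Lemma~\ref{11223-outside-2} is organized precisely as Cases 1--5, one per type, and several of them (Types II, IV, V) put the color $3$ on a boundary edge. Two types would not be enough to close the induction.

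Second, your step (i) is weaker than what is actually needed and proved: the paper's Lemma~\ref{11223-outside-1} shows every pendant face other than $F_0$ is a \emph{triangle}, not a $3$- or $4$-face. With two $1$-colors available the reduction is strong enough to kill $4$-faces too, and this simplifies the final assembly since every pendant structure is then either a leaf, a triangle-with-leaf, or an HC graph. Relatedly, the arithmetic is parity-based ($k$ even versus odd, driven by the pattern $[1_a,1_b]$ on boundary edges with $2_a,2_b$ on the leaves), not $k\bmod 3$; you acknowledge the patterns must be redesigned, but your stated case split on $k\bmod 3$ would not line up with the actual extensions. These are not fatal to the overall plan, but as written the proposal would not compile into a proof without replacing the gadget, tightening the pendant-face lemma, and redoing the extension patterns by parity.
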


As a corollary of our result, every subcubic outerplanar graph is $(1^2,2^3)$-packing edge-colorable, and thus we confirmed the conjecture (two $1$-colors) of Hocquard et al~\cite{HLL} for subcubic outerplanar graphs.

\begin{cor}\label{cor-2}
Every subcubic outerplanar graph is $(1^2,2^3)$-packing edge-colorable.
\end{cor}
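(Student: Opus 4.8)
The plan is to obtain Corollary~\ref{cor-2} for free from the immediately preceding theorem, which asserts that every subcubic outerplanar graph admits a $(1^2,2^2,3)$-packing edge-coloring. The key, and essentially only, observation is the monotonicity of the distance constraints defining an $S$-packing edge-coloring: if $S=(s_1,\ldots,s_k)$ and $S'=(s_1',\ldots,s_k')$ satisfy $s_i'\le s_i$ for every $i$, then any $S$-coloring is automatically an $S'$-coloring, because a class $E_i$ whose edges are pairwise at distance at least $s_i+1$ is in particular a class whose edges are pairwise at distance at least $s_i'+1$.

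First I would fix a subcubic outerplanar graph $G$ and apply the theorem to obtain a partition $E(G)=E_1\cup E_2\cup E_3\cup E_4\cup E_5$ realizing a $(1,1,2,2,3)$-coloring. Thus $E_1,E_2$ are matchings (edges pairwise at distance at least $2$), $E_3,E_4$ are induced matchings (edges pairwise at distance at least $3$), and the edges of $E_5$ are pairwise at distance at least $4$. Since $4\ge 3$, the class $E_5$ in particular has its edges pairwise at distance at least $3$, so it qualifies as a third induced matching. Hence the very same partition, now read against the sequence $(1,1,2,2,2)$ in place of $(1,1,2,2,3)$, is a $(1^2,2^3)$-coloring of $G$, which is exactly the conclusion of the corollary.

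There is no real obstacle in this step: all the combinatorial work lies in the $(1^2,2^2,3)$ theorem itself, and the corollary is a pure relabeling requiring no further case analysis. For completeness I note that the theorem it relies on would be proved by the same strategy used for the one-color results in Section~\ref{12222} and for Theorem~\ref{122223-main-theorem}, namely a minimal-counterexample argument together with the block and weak-dual decomposition of outerplane graphs: one peels off pendant faces and reduces large faces, extending a suitably structured (``good'') coloring across each removed layer, with gadgets playing the role that the house and house-with-chimney graphs play in the $(1,2^4)$ setting.
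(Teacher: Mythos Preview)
Your derivation is correct and matches the paper's own reasoning: the corollary is stated immediately after the $(1^2,2^2,3)$ theorem and is obtained exactly by the monotonicity observation you spell out, namely that weakening the last entry from $3$ to $2$ can only relax the constraint. The final paragraph sketching how the underlying theorem is proved is accurate but extraneous to what the corollary itself requires.
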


Corollary~\ref{cor-2} is also sharp since there are subcubic outerplanar graphs that are not $(1^2,2^2)$-colorable (Example~\ref{ex1}).

\subsection{$(1^2,2^2,k_2)$-packing edge-coloring of subcubic outerplanar graphs}\label{1122k}

We provide a subcubic outerplanar graph that is not $(1^2,2^2,5)$-colorable, showing $k_2 \le 4$.

\begin{example}
 The graph $G_3$ is shown in the right picture of Fig.~\ref{example_6}. Each of the gadget $G_{2a}, \ldots, G_{2e}$ is isomorphic to $G_2$, which is shown in the left picture of Fig.~\ref{example_6}. Each vertex $u_{1a}, \ldots, u_{1e}$ of $G_{2a}, \ldots, G_{2e}$ corresponds to $u_1$ of $G_2$. In $G_2$, each gadget $G_{1a}, \ldots, G_{1g}$ is isomorphic to the graph $G_1$ in Example~\ref{ex1}.  We show $G_3$ is a subcubic outerplanar graph with no $(1^2,2^2,5)$-coloring.
\end{example}

\begin{figure}[ht]
\begin{center}
 \vspace{-3mm}
 \includegraphics[scale=0.35]{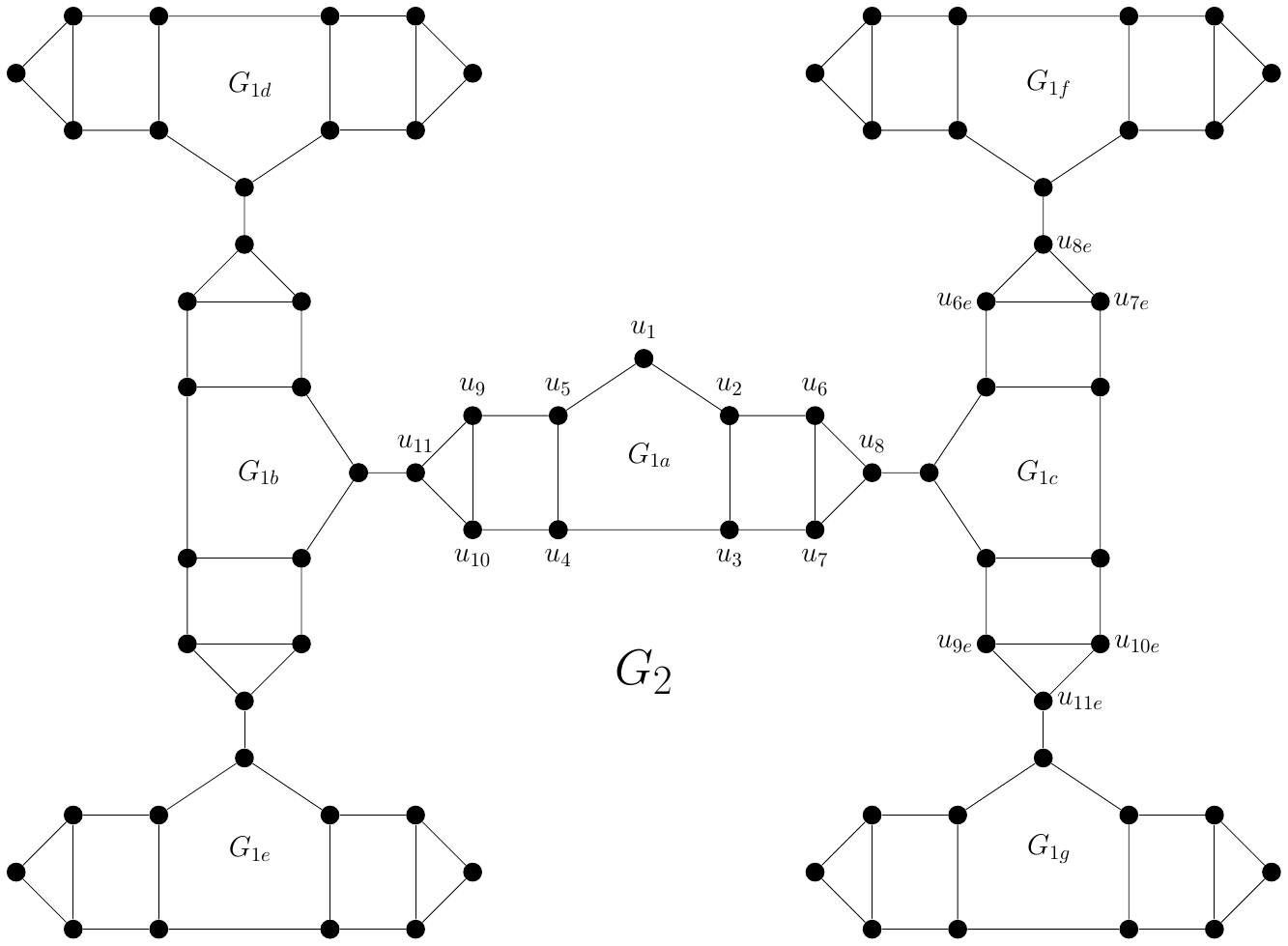}
  \includegraphics[scale=0.5]{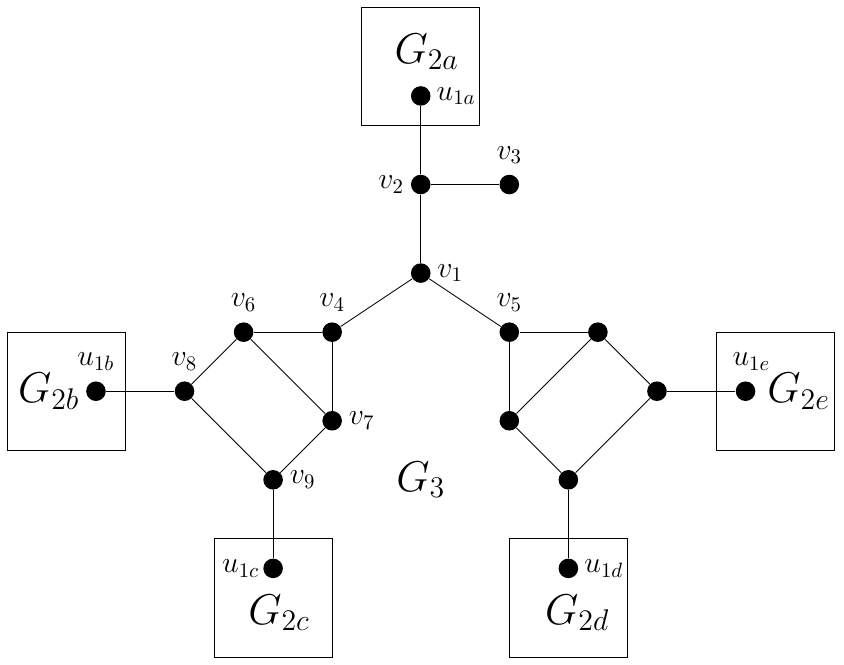} 

\caption{A subcubic outerplanar graph that is not $(1^2,2^2,5)$-packing edge-colorable.}\label{example_6}
\end{center}
\vspace{-8mm}
\end{figure}

\begin{proof}
We claim only edges $u_1u_2, u_2u_3, u_3u_4, u_4u_5, u_1u_5$ in $G_2$ can use color $5$ in each $(1^2,2^2,5)$-coloring. 

\begin{claim}\label{only5}
Only edges $u_1u_2, u_2u_3, u_3u_4, u_4u_5, u_1u_5$ in $G_2$ can use color $5$ in each $(1^2,2^2,5)$-coloring.    
\end{claim}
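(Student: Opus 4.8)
The plan is to exploit the non-colorability of $G_1$ (Example~\ref{ex1}) together with the rigidity of the two house colorings from Claim~\ref{house1122}. Fix an arbitrary $(1^2,2^2,5)$-coloring $f$ of $G_2$ and focus on a single copy $G_{1x}$, $x\in\{a,\dots,g\}$. The first step is to observe that color $5$ must occur inside every $G_{1x}$. Indeed, $G_{1x}$ is a subgraph of $G_2$, so distances measured inside $G_{1x}$ are at least as large as in $G_2$; hence the restriction of $f$ to $E(G_{1x})$ is still a legal packing coloring of the graph $G_1$. If that restriction used no edge of color $5$, it would be a genuine $(1^2,2^2)$-coloring of $G_1$, contradicting Example~\ref{ex1}. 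Since color $5$ carries the distance requirement $s=5$ (any two edges of color $5$ are at distance at least $6$) and $G_1$ has small diameter, each $G_{1x}$ contains exactly one color-$5$ edge $e_x$.

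The heart of the argument is to pin down the admissible location of $e_x$. Because $e_x$ is the only color-$5$ edge of $G_{1x}$, all remaining edges of $G_{1x}$ are colored from $\{1_a,1_b,2_a,2_b\}$, and by the same subgraph–distance monotonicity this restriction is a valid $(1^2,2^2)$-coloring of $G_1-e_x$. So I would prove the following key lemma: \emph{for every edge $e$ of $G_1$ that does not lie on the central $5$-cycle $u_1u_2u_3u_4u_5$, the graph $G_1-e$ has no $(1^2,2^2)$-coloring.} Granting this, $e_x$ cannot be a non-central edge, so $e_x$ lies on the central $5$-cycle, which is exactly the statement of the claim (and, since all five of these edges are pairwise within distance $2$, it further forces at most one color-$5$ edge in the whole of $G_2$). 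To establish the key lemma I would use Claim~\ref{house1122}: each of the two houses comprising $G_1$ admits only the Type~I and Type~II colorings, and the proof of Example~\ref{ex1} shows the two houses are driven into the same type and then clash on the shared $5$-cycle. Deleting an interior (house) edge leaves the other house rigid and the shared cycle over-constrained, so the clash survives; only deleting one of the five central-cycle edges relaxes the obstruction.

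The step I expect to be the main obstacle is precisely this key lemma, i.e. verifying that $G_1-e$ remains $(1^2,2^2)$-uncolorable for \emph{every} non-central edge $e$. This is a finite check, but to avoid a brute-force edge-by-edge analysis it must be organized around the automorphisms of $G_1$ and the Type~I/Type~II dichotomy, reducing to a handful of edge orbits (the two roof edges and the leg/pendant edges of a house, and the shared edge). A secondary subtlety is boundary consistency: the coloring of $G_{1x}$ must agree with the colors used on the edges outside $G_{1x}$ at the vertices where $G_{1x}$ attaches to the rest of $G_2$; since the lemma only needs to \emph{rule out} non-central placements, this causes no trouble (deleting $e$ only enlarges distances and relaxes constraints), but it is the point one must be careful about when later turning the claim into the gluing argument that shows $G_3$ is not $(1^2,2^2,5)$-colorable.
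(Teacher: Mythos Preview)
Your plan hinges on the key lemma that $G_1-e$ has no $(1^2,2^2)$-coloring whenever $e$ lies off the central $5$-cycle of $G_1$. This is where the approach breaks: that lemma fails for some non-cycle edges, and the paper's own argument makes this visible. After assuming one of $u_2u_6,u_6u_7,u_3u_7$ carries colour $5$, the paper writes that ``the only two edges of $G_{1e}$ that can be colored with $5$ are $u_{7e}u_{8e}$ and $u_{10e}u_{11e}$''. Neither of those lies on the $5$-cycle $u_{1e}u_{2e}u_{3e}u_{4e}u_{5e}$ of $G_{1e}$. If your lemma held, these placements would already be excluded locally and no further step would be needed; instead the paper must allow $f(u_{7e}u_{8e})=5$ and derive a contradiction from the \emph{neighbouring} gadget $G_{1f}$, all of whose edges then sit within distance $5$ of $u_{7e}u_{8e}$. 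So the rigidity you are hoping for inside a single copy of $G_1$ is not available: deleting a ``tip'' edge of one of the outer triangles of $G_1$ leaves a $(1^2,2^2)$-colorable graph.

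The paper's proof is accordingly quite different from your plan. It never studies $G_1-e$; it runs a distance cascade between gadgets. Step one rules out $u_6u_8,u_7u_8$ (and symmetrically $u_9u_{11},u_{10}u_{11}$) because every edge of the adjacent gadget $G_{1e}$ lies within distance $5$ of them, so $G_{1e}$ would be left with no legal colour-$5$ edge, contradicting Example~\ref{ex1}. Step two rules out $u_2u_6,u_6u_7,u_3u_7$ (and their mirrors) because each of them forces the colour-$5$ edge of $G_{1e}$ into one of its two extremal positions, and either choice then blocks all of $G_{1f}$. What you flagged as a ``secondary subtlety'' --- interaction across gadget boundaries --- is exactly the engine of the actual proof, not a side issue to be set aside.
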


\begin{proof}
By Example~\ref{ex1}, each of $G_{1a}, \ldots, G_{1g}$ must use color $5$ exactly once. We first notice that $u_6u_8,u_7u_8$ cannot be colored with $5$. Otherwise, no vertices of $G_{1e}$ can be colored with $5$. This is a contradiction. Similarly, $u_9u_{11}, u_{10}u_{11}$ cannot be colored with $5$. Suppose one of $u_2u_6, u_6u_7, u_3u_7$ is colored with $5$. Then the only two edges of $G_{1e}$ that can be colored with $5$ are $u_{7e}u_{8e}$ and $u_{10e}u_{11e}$, say $u_{7e}u_{8e}$ is colored with $5$. However, no vertices of $G_{1f}$ can be colored with $5$, which is a contradiction. Similarly, $u_5u_9, u_9u_{10}, u_4u_{10}$ cannot be colored with $5$.    
\end{proof}

Suppose $G_3$ has a $(1^2,2^2,5)$-coloring. By Claim~\ref{only5}, none of $u_{1a}v_2, v_2v_3, v_1v_2, v_1v_5, v_1v_4$ can be colored with $5$. Furthermore, $v_1v_4$ and $v_1v_5$ cannot both be colored with a color in $\{2_a, 2_b\}$, since otherwise, say $v_1v_4,v_1v_5$ is colored with $2_a,2_b$, $v_1v_2$ must be colored with a color in $\{1_a, 1_b\}$ and we only have one available color for $v_2u_{1a}$ and $v_2v_3$. This is a contradiction. We may assume $v_1v_4$ is colored with $1_a$. Applying Claim~\ref{only5} to $G_{2b}$ and $G_{2c}$, we know none of the edges $v_4v_6, v_4v_7, v_6v_7, v_6v_8, v_7v_9, v_8v_9, v_8u_{1b}, v_9u_{1c}$ can be colored with $5$. However, by Claim~\ref{house1122}, $v_4v_6,v_4v_7$ must be colored with $2_a,2_b$. This is a contradiction since we only have one available color for edges $v_1v_2$ and $v_1v_5$.
\end{proof}

Then we show every subcubic outerplanar graph is $(1^2,2^2,3)$-packing edge-colorable and thus $3 \le k_2 \le 4$. In this subsection, a {\em good coloring} is a $(1^2,2^2,3)$-packing edge-coloring such that every HC graph can only be colored with type I, II, III, IV, V colorings, unless there are at least two non-trivial block and the vertex $w_0$ of the HC graph is a degree three vertex in $G$ (see Fig.~\ref{house-with-chimney-2}).

\begin{figure}[ht]
\begin{center}
 \vspace{-3mm}
 \includegraphics[scale=0.6]{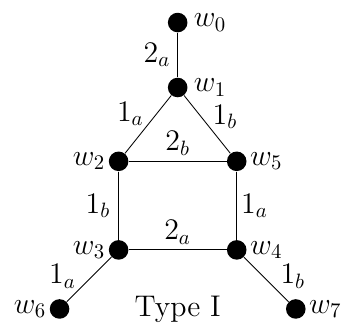} \hspace{1mm}
  \includegraphics[scale=0.6]{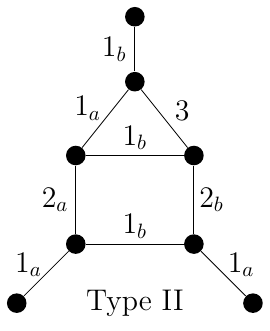} \hspace{3mm}
   \includegraphics[scale=0.6]{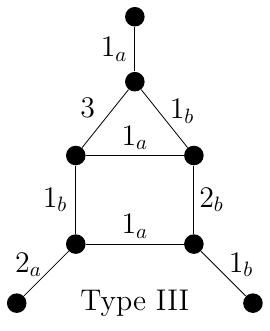} \hspace{3mm}
    \includegraphics[scale=0.6]{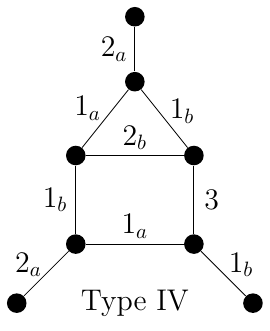} \hspace{3mm}
     \includegraphics[scale=0.6]{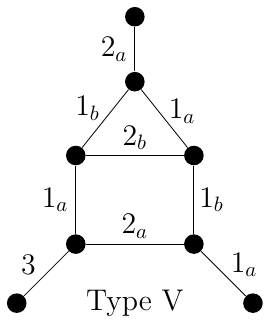}
     \vspace{-3mm}
\caption{Type I, II, III, IV, V colorings of the ``house-with-chimney'' graph.}\label{house-with-chimney-2}
\end{center}
\vspace{-8mm}
\end{figure}


\begin{theorem}\label{11223-main-theorem}
Every subcubic outerplanar graph with no $2$-vertices has a good coloring. 
\end{theorem}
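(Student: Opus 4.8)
The plan is to run exactly the minimal-counterexample and local-reducibility scheme that drove the proof of Theorem~\ref{122223-main-theorem}, now carried out over the five-color palette $\{1_a,1_b,2_a,2_b,3\}$ and the five admissible HC-patterns of Fig.~\ref{house-with-chimney-2}. First I would suppose the statement fails and choose a subcubic outerplanar graph $G$ with no $2$-vertices, no good coloring, and $|V(G)|+|E(G)|$ minimum. The opening reductions are structural and essentially color-agnostic: a tree hanging on a cycle may be assumed to be a single edge, because a longest hanging path ends at a leaf $u_1$ whose neighbor $u_2$ (having degree three, as there are no $2$-vertices) carries a second leaf $u_1'$; deleting the twin leaves, coloring by minimality, and restoring them succeeds since $u_1u_2$ and $u_1'u_2$ lie within distance two of at most three already-colored edges while we have the four short-range colors $1_a,1_b,2_a,2_b$ (and, if needed, color $3$) to reinstate them. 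The same deletion forces $G$ to contain a non-trivial block, and lets me fix a non-trivial leaf block $B_1$ with cut vertex $u_0$ and base face $F_0$. The case where $B_1$ has a single face is dispatched directly by spreading a valid period-three pattern such as $[1_a,2_a,1_b]$ around the cycle, splitting on whether $f(u_0v_0)$ is a $1$-color or a $2$-color exactly as in Cases~1--2 of the earlier proof; the exceptional clause in the definition of good coloring (two non-trivial blocks with $w_0$ of degree three) is precisely what relaxes the HC-type requirement in these boundary configurations.

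The engine of the argument is then the chain of reducibility lemmas transplanted from Section~\ref{12222k}. I would prove, in order: (i) every pendant face of $B_1$ other than $F_0$ is a $3$-face or a $4$-face (mirroring Lemma~\ref{122223-outside-1}), by contracting a long pendant face to a short one and redistributing the boundary colors with a periodic pattern while placing the mandatory leaf edges; (ii) the weak-dual distance $\ell$ from $F_0$ to the outermost pendant face is at least $2$ (mirroring Lemma~\ref{122223-two-layers}); and (iii) the face $F_2$ adjacent to the outermost pendant face is a $4$-face, together with the remark forcing $F_1$ to be a $3$-face (mirroring Lemma~\ref{122223-outside-2} and Remark~\ref{122223-outside-2-rem}). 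In each lemma I would introduce the auxiliary HC gadget on $z,z_1,z_2,z_k$, invoke minimality on the smaller graph, and read off the forced colors on the interface edges $u_1v_1,v_1v_k,v_ku_k$; the case split is organized by how many $1$-colors appear among these three edges and whether color $3$ appears, as in the earlier Cases~1--2 and their subcases, but now with two $1$-colors to seat and the five HC-types to respect.

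With the reducibility lemmas in hand, the endgame splits on $\ell$. When $\ell=2$, Lemma~\ref{122223-outside-2} and its remark say $B_1$ consists of $F_0$ carrying only leaf edges, $3$-faces-with-a-leaf, $4$-faces-with-two-leaves, and HC gadgets, so the coloring built in the two-layer lemma already applies because every decorated small face was already treated as an HC gadget. When $\ell\ge 3$ I would peel off $V(F_3)\setminus\{v_1,v_k\}$ with all its attached decorations, replace them by the standard HC gadget on $z,z_1,z_2,z_k$, color the smaller graph by minimality, and extend along the boundary cycle of $F_3$ using a period-three pattern chosen according to $k\bmod 3$, recoloring one boundary edge to $3$ wherever an HC gadget sits and reusing the three HC-type branches from the proof of Lemma~\ref{122223-outside-2}.

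The main obstacle I anticipate is bookkeeping rather than a new idea: the five HC-types (versus three in the $(1,2^4,3)$ case) and the need to place two independent $1$-colors force a substantially larger case analysis when extending colors around the boundary of $F_3$. The genuinely tight constraints are the distance-$3$ requirement on $2_a,2_b$ and the distance-$4$ requirement on color $3$: whenever the periodic boundary pattern meets an HC gadget I must check that the single $3$-edge inside the gadget stays at distance at least four from every other $3$-edge, and that recoloring the adjacent boundary edge to a $1$-color does not collide with the twin leaf edges it sees. Closing the residue classes $k\equiv 0,1,2 \pmod 3$ at the two ends $v_1,v_k$ — where the already-fixed interface colors constrain the first and last few edges — is where the proof will need the most careful, explicitly tabulated coloring choices, exactly as in Cases~1--3 of the proof of Theorem~\ref{122223-main-theorem}.
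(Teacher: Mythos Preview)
Your high-level skeleton (minimal counterexample, hanging-tree reduction, leaf block $B_1$, pendant-face lemma, $\ell\ge 2$ lemma, $F_2$-is-a-$4$-face lemma, endgame on $\ell$) matches the paper exactly. Where you diverge is in the coloring mechanics, which you have transplanted verbatim from the $(1,2^4,3)$ proof without adapting to the $(1^2,2^2,3)$ palette.

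Two concrete mismatches. First, you state that the pendant-face lemma will give ``$3$-face or $4$-face'' and that a separate remark later forces $F_1$ to be a $3$-face. The paper instead proves the stronger Lemma~\ref{11223-outside-1}: every pendant face other than $F_0$ is already a $3$-face (no $4$-face option, no remark needed). With two $1$-colors available, a pendant $4$-face is already reducible; carrying the $4$-face case through the rest of the argument as you propose is unnecessary and complicates the endgame. Second, and more importantly, you plan to color boundary cycles with a period-three pattern and split on $k\bmod 3$. The paper does not do this: with two $1$-colors the natural move is to alternate $1_a,1_b$ along the boundary and push $2_a,2_b$ onto the leaf edges, so the case split is by the \emph{parity} of $k$, not $k\bmod 3$. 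Correspondingly, the extension in Lemma~\ref{11223-outside-2} is organized not by the colors on $u_1v_1,v_1v_k,v_ku_k$ but by which of the five HC-types the inserted gadget $H$ receives (Cases~1--5), and inside each case by $k$ odd/even.

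Your period-three scheme is not obviously wrong, but you have not checked---and it is far from automatic---that the five admissible HC-types can always be seated along a $[1_a,2_a,1_b]$-type boundary, nor that color $3$ inside an HC stays at distance $\ge 4$ from color $3$ in a neighboring HC under that pattern. The paper's period-two scheme is what makes the five-type case analysis close; if you follow your plan you will have to redo that verification from scratch.
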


\textbf{Proof of Theorem~\ref{11223-main-theorem}:} The proof strategy is similar to the proof of Theorem~\ref{122223-main-theorem}. Suppose not, i.e., there is a subcubic outerplanar graph with no $2$-vertices and it has no good coloring. We take the graph $G$ with minimum $|V(G)| + |E(G)|$. We first show if there is a tree hanging on a cycle, then it must be an edge. To see this, let $T$ be a tree hanging on a cycle $C$ in $G$, where $u \in V(C) \cap V(T)$, and we assume $T$ has at least three edges. Let $P_1$ be a longest path in $T$ ended at $u$, say $P = uu_s \ldots u_1$, $s \ge 2$. Let the only neighbour of $u_1$ be $u_2$. Since there are no $2$-vertices, $u_2$ has another leaf neighbour $u_1'$. Since $s \ge 2$, $u_2 \neq u$. We delete $u_1,u_1'$ from $G$ to obtain a graph $G'$, which is a subcubic outerplanar graph with no $2$-vertices and satisfies $|V(G')|+|E(G')|<|V(G)|+|E(G)|$. By the minimality of $G$, $G'$ has a good coloring $f$. Let $N(u) = \{u_s, u', u''\}$. If $s = 2$, then $f(uu_2) \in \{1_a, 1_b\}$, say $f(uu_2) = 1_a$, since otherwise we can color $u_2u_1,u_2u_1'$ with $1_a, 1_b$. Furthermore, we may assume $\{f(uu'), f(uu'')\} = \{2_a, 2_b\}$, since otherwise we can color $u_2u_1'$ with $1_b$ and $u_2u_1$ with a color from $\{2_a, 2_b\}$. We also know $u', u''$ must be $3$-vertices and adjacent to two edges colored with $1_a, 1_b$, since otherwise we can recolor $uu'$ or $uu''$ with a color in $\{1_a, 1_b\}$ and complete the extension. Therefore, we can color $u_2u_1', u_2u_1$ with $1_b, 3$. If $s \ge 3$, then similarly we know $3$ is not used within distance three from $u_1u_2$ and thus we can color $u_2u_1', u_2u_1$ with $1_b, 3$. This is a contradiction. By the same proof, we may assume that $G$ has at least one cycle. In other words, $G$ has at least one non-trivial block.

We pick a non-trivial leaf block $B_1$. Let $u_0$ be the vertex of $B_1$ connecting the other blocks in $G$ and the face containing $u_0$ be $F_0$ (if there is only one non-trivial block $B_1$, then we pick any leaf edge $v_0u_0$ in a cycle of $B_1$, where $v_0$ is the $1$-vertex). We may assume $B_1$ has at least two faces. Otherwise, let $u_0u_1 \ldots u_k$ be the boundary cycle of $F_0$, where $k \ge 2$, and $N(u_0) = \{u_1, u_k, v_0\}$ and $N(v_0) = \{u_0, v_0', v_0''\}$. Since $G$ has no $2$-vertices, there is a leaf edge $u_iu_i'$ hanging on each of $u_i$, where $1 \le i \le k$. We delete all $u_i,u_i'$, $1 \le i \le k$, to obtain $G'$. By the minimality of $G$, $G'$ has a good coloring $f$. \textbf{Case (i):} $f(u_0v_0) \in \{2_a, 2_b\}$, say $f(u_0v_0) = 2_a$. If $k$ is odd, then we color $u_0u_1, \ldots, u_ku_0$ with $1_a, 1_b, 1_a, \ldots, 1_b$, and the leaf edges $u_1u_1', \ldots, u_ku_k'$ with $2_b, 2_a, \ldots, 2_b$. If $k = 2$, then we color $u_0u_1, u_1u_2, u_2u_0, u_1u_1', u_2u_2'$ with $1_a, 2_b, 1_b, 1_b, 1_a$. If $k$ is even and $k \ge 4$, then color $u_0u_1,u_1u_1',u_1u_2$ with $1_a, 1_b, 2_b$, $u_0u_k, \ldots, u_3u_2$ together with $u_2u_2'$ with $1_a, 1_b, \ldots, 1_a$, and $u_3u_3', \ldots, u_ku_k'$ with $2_a, 2_b, \ldots, 2_b$.
\textbf{Case (ii):} $f(u_0v_0) \in \{1_a, 1_b\}$, say $f(u_0v_0) = 1_a$. Then we can color $u_0u_1$ with $1_b$ and $u_0u_k$ with one of $\{2_a, 2_b, 3\}$. To see this, if $\{f(v_0v_0'), f(v_0v_0'')\} \neq \{2_a,2_b\}$, then we are done. If $\{f(v_0v_0'), f(v_0v_0'')\} = \{2_a,2_b\}$, then each of $v_0',v_0''$ sees $1_a,1_b$ and thus we can color $u_0u_k$ with $3$. In case $k$ is even, we color the edges $u_1u_2$ until $u_{k-1}u_k$ together with $u_ku_k'$ with $1_a, 1_b, \ldots, 1_a, 1_b$, and $u_1u_1', \ldots, u_{k-1}u_{k-1}'$ with $x,y, \ldots, x$, where $x \in \{2_a, 2_b\} - f(u_0u_k)$ and $y \in \{2_a, 2_b\} - x$. In case $k = 3$, we color $u_2u_2', u_2u_3, u_3u_3', u_3u_4, u_4u_4'$ with $1_a, x, 1_a, 1_b, 1_a$, where $x \in \{2_a, 2_b\} - f(u_0u_k)$. In case $k$ is odd and $k \ge 5$, we color $u_1u_1', u_1u_2, u_2u_2'$ with $1_a,x, 1_a$, where $x \in \{2_a, 2_b\} - f(u_0u_k)$, $u_2u_3$ until $u_{k-1}u_k$ together with $u_ku_k'$ with $1_b, 1_a, \ldots, 1_b, 1_a$, and $u_3u_3', \ldots, u_{k-1}u_{k-1}'$ with $y,x,\ldots,x$.
\textbf{Case (iii):} $f(u_0v_0) = 3$. We color $u_0u_1, u_1u_1', u_ku_0$ with $1_a, 1_b, 1_b$. In case $k =2$, we color $u_1u_2, u_2u_2'$ with $2_a, 1_a$. In case $k \ge 3$, we color $u_1u_2$ with $2_a$, $u_{k-1}u_k$ until $u_2u_3$ together with $u_2u_2'$ with $1_a, 1_b, \ldots$, and $u_3u_3'$ until $u_ku_k'$ with $2_b, 2_a, \ldots$. This is a contradiction.

We first show every pendant face (except $F_0$) in $B_1$ is a triangle.

\begin{lemma}\label{11223-outside-1}
Let $F_1$ be a pendant face in $B_1$ and $F_1 \neq F_0$. Then $F_1$ is a $3$-face.    
\end{lemma}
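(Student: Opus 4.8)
The plan is to prove Lemma~\ref{11223-outside-1} by contradiction via the minimality of $G$, following the same reduction-and-extension template already used for Lemma~\ref{122223-outside-1} in the $(1,2^4,3)$ setting. Suppose $F_1$ is a pendant face with boundary cycle $v_1v_2\cdots v_k$ where $k\ge 4$, with $V(F_1)\cap V(F_2)=\{v_1,v_k\}$ for the unique adjacent face $F_2$. Since $G$ has no $2$-vertices, each interior vertex $v_i$ ($2\le i\le k-1$) carries a leaf edge $v_iv_i'$. First I would delete all the $v_i$ and $v_i'$ for $2\le i\le k-1$ and glue in a small gadget on $v_1,v_k$ — most naturally an HC graph with apex vertices $z,z_1,z_2$ (edges $v_1z_1,v_kz_2,z_1z_2,\dots$) exactly as in Lemmas~\ref{122223-two-layers} and~\ref{122223-outside-2} — to obtain a strictly smaller graph $G'$ that is still subcubic, outerplanar, and $2$-vertex-free. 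By minimality $G'$ has a good coloring $f$, and the whole content of the lemma is to extend $f$ across the long face $F_1$.

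The core of the argument is a case analysis on the colors that $f$ assigns to the three ``boundary'' edges $u_1v_1,\ v_1v_k,\ v_ku_k$ (writing $N(v_1)=\{u_1,v_2,v_k\}$, $N(v_k)=\{u_k,v_1,v_{k-1}\}$), since these are the only pre-colored edges within distance two of the interior of $F_1$. Because the palette now has two independent $1$-colors $1_a,1_b$, two strong colors $2_a,2_b$, and the color $3$, I would first split on whether color $3$ appears on these three edges, then on how many of them carry a $1$-color. The key coloring idea is that along the long path $v_1v_2\cdots v_k$ I can alternate the two proper colors $1_a,1_b$ freely (they impose only a distance-$2$ constraint, which adjacent edges never violate), parking color $3$ and the strong colors only where the residue of $k$ modulo the relevant period forces a clash; meanwhile every leaf edge $v_iv_i'$ gets color $1_a$ or $1_b$ by default and is adjusted near the two ends. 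The definition of good coloring guarantees, up to symmetry, a normalized form for $f$ on the boundary edges (for instance the ``used twice'' $1$-color configurations and the ``color $3$ present'' configurations), which keeps the number of genuinely distinct subcases bounded; each subcase is then closed by exhibiting an explicit extension, checking only the distance conditions at the two junctions $v_1,v_k$ and at the parked color $3$.

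**The main obstacle** I expect is not any single hard idea but the bookkeeping at the two ends of $F_1$ together with the divisibility obstructions: the alternating $1_a/1_b$ pattern has period $2$ while the strong colors and color $3$ want period $3$, so the edge $v_{k-2}v_{k-1}$ (and symmetrically near $v_2$) can land on a forbidden color depending on $k\bmod 2$ and $k\bmod 3$. The delicate step is verifying that in every residue class one can absorb the leftover by recoloring a bounded patch near an endpoint — typically promoting one boundary edge to color $3$ or $1$ and re-routing the adjacent leaf edges — without creating a distance-$2$ conflict among the two strong colors or a distance-$3$ conflict for color $3$. This is exactly the style of terminal adjustment already carried out in Cases~1 and~2 of Lemma~\ref{122223-outside-1}; the extra $1$-color here actually makes the interior easier (two proper colors give more slack) at the cost of a longer terminal case list. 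I would also need to confirm that the gadget chosen for $G'$ forces $f$ into one of the normalized boundary patterns, so that the reduction is genuinely reversible and no boundary configuration is left unhandled.
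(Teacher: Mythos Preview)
Your overall template (delete the interior of $F_1$, glue a gadget on $v_1,v_k$, invoke minimality, then extend by a boundary case analysis) is exactly right, and your description of the extension step---alternate $1_a,1_b$ along the path, park $2_a,2_b,3$ on leaves and near the endpoints, fix residue clashes by local recoloring---matches the paper's proof closely. The paper organises the boundary cases as ``is $3$ used on $u_1v_1,v_1v_k,v_ku_k$?'' and then ``how many of $2_a,2_b$ are used?'', which is the complement of your ``how many $1$-colors?'' split, but these are the same partition.

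There is, however, a concrete gap in your reduction: the gadget you propose is the HC graph (four new vertices $z,z_1,z_2,z_k$ and six new edges), citing Lemmas~\ref{122223-two-layers} and~\ref{122223-outside-2}. Those lemmas treat the \emph{second} face $F_2$, not a pendant face, and there the face being replaced has $k\ge 5$ by hypothesis. For a pendant $F_1$ you must cover $k=4$, and at $k=4$ the HC gadget does \emph{not} yield a strictly smaller graph: you delete $v_2,v_3,v_2',v_3'$ and five edges (total $9$), but the HC adds four vertices and six edges (total $10$), so $|V(G')|+|E(G')|>|V(G)|+|E(G)|$ and minimality cannot be invoked. The correct analogue is Lemma~\ref{122223-outside-1}, which for a pendant face uses the much lighter triangle-plus-leaf gadget (add $v,v'$ with edges $vv_1,vv_k,vv'$: two vertices, three edges); the paper's proof of Lemma~\ref{11223-outside-1} uses exactly this gadget. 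With this gadget the good-coloring constraint on HC configurations need not apply to the reduced triangle, which is why the paper's case analysis runs over \emph{all} color patterns on $u_1v_1,v_1v_k,v_ku_k$ rather than just Types I--V; once you make this correction your plan goes through.
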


\begin{proof}
Suppose $F_1$ is a $k$-face, where $k \ge 4$. Let the only face adjacent to $F_1$ be $F_2$ and the boundary cycle of $F_1$ be $v_1 \ldots v_k$ with $V(F_1) \cap V(F_2) = \{v_1, v_k\}$. Since there are no $2$-vertices, each $v_i$ has a leaf neighbour $v_i'$, $2 \le i \le k-1$. Let $N(v_1) = \{u_1, v_2, v_k\}$ and $N(v_k) = \{u_k,v_1, v_{k-1}\}$. We delete all $v_i,v_i'$, $2 \le i \le k-1$, and add a vertex $v,v'$ with edges $vv_1, vv_k, vv'$ to obtain a graph $G'$. By the minimality of $G$, $G'$ has a good coloring $f$. 

\textbf{Case 1:} Color $3$ is not used on $u_1v_1, v_1v_k, v_ku_k$.

\textbf{Case 1.1:} Colors $2_a,2_b$ are not used on $u_1v_1, v_1v_k, v_ku_k$. Say $f(u_1v_1), f(v_1v_k), f(v_ku_k)$ = $\{1_a, 1_b, 1_a\}$. By symmetry, we may assume either $f(vv_1), f(vv_k)$ = $2_a, 2_b$ or $2_a, 3$. In case $f(vv_1), f(vv_k)$ = $2_a, 2_b$, then we color $v_2v_2', v_1v_2, v_{k-1}v_k$ with $1_b, 2_a, 2_b$, $v_2v_3$ until $v_{k-2}v_{k-1}$ together with $v_{k-1}v_{k-1}'$ with $1_a, 1_b, \ldots$. Note that if $k=4$ then we already completed the extension. If $k$ is even and $k \ge 6$, then we color $v_3v_3'$ until $v_{k-2}v_{k-2}'$ with $2_b, 2_a, \ldots, 2_b, 2_a$. If $k$ is odd, then we know $k \ge 5$ and color $v_3v_3'$ with $3$, and (if $k \ge 7$) $v_{k-2}v_{k-2}'$ until $v_4v_4'$ with $2_a, 2_b, \ldots, 2_a, 2_b$.

\textbf{Case 1.2:} Colors $2_a,2_b$ are used on $u_1v_1, v_1v_k, v_ku_k$. Then $2_a,2_b$ can be used either once or twice on $u_1v_1, v_1v_k, v_ku_k$.

\textbf{Case 1.2.1:} Colors $2_a,2_b$ are used once on $u_1v_1, v_1v_k, v_ku_k$. By symmetry, $u_1v_1, v_1v_k, v_ku_k$ are colored with either $1_a,2_a,1_a$, or $1_a, 2_a, 1_b$, or $1_a, 1_b, 2_a$. In the former case, we can always recolor $v_1v_k$ with $1_b$ (for all possibilities of $f(vv_1), f(vv_k), f(vv')$), which was solved in \textbf{Case 1.1}. In the middle case, we can assume $f(vv_1), f(vv_k), f(vv')$ are colored with $1_b, 1_a, 2_b$ (if not, then recolor it to this coloring). We color $v_1v_2, v_{k-1}v_k$ with $1_b, 1_a$. If $k$ is odd, then $k \ge 5$ and color $v_2v_3$ until $v_{k-2}v_{k-1}$ with $1_a, 1_b, \ldots, 1_a, 1_b$, and $v_2v_2'$ until $v_{k-1}v_{k-1}'$ with $2_b, 2_a, \ldots, 2_a, 2_b$. If $k$ is even, then color $v_2v_2', v_2v_3, v_3v_3'$ with $1_a, 2_b, 1_b$, and (if $k \ge 6$) $v_3v_4$ until $v_{k-2}v_{k-1}$ with $1_a, 1_b, \ldots, 1_a, 1_b$ and $v_4v_4'$ until $v_{k-1}v_{k-1}'$ with $2_a, 2_b, \ldots, 2_a, 2_b$. In the latter case, we know by symmetry that $f(vv_1), f(vv_k), f(vv')$ are colored with $2_b, 1_a, 1_b$ or $3, 1_a, 1_b$. We claim $3, 1_a, 1_b$ is impossible, since otherwise we color $v_{k-1}v_k$ until $v_2v_3$ together with $v_2v_2'$ with $1_a, 1_b, \ldots$ and color $v_{k-1}v_{k-1}'$ until $v_3v_3'$ with $2_b, 2_a, \ldots$. Thus, $f(vv_1), f(vv_k), f(vv') = 2_b, 1_a, 1_b$. We color $v_1v_2, v_{k-1}v_k$ with $2_b, 1_a$. If $k = 4$, then we know $u_k$ must see $3$, since otherwise we color $v_2v_2', v_2v_3, v_3v_3'$ with $1_a, 1_b, 3$. We also know $u_k$ must see $1_a$ since otherwise we switch the colors of $v_{k-1}v_k$ and $v_ku_k$ and color $v_2v_2', v_2v_3, v_3v_3'$ with $1_a, 1_b, 1_a$. Therefore, we switch the colors of $v_1v_2$ and $v_1v_k$, recolor $v_ku_k$ with $1_b$, and color $v_2v_2', v_2v_3, v_3v_3'$ with $1_a, 2_a, 1_b$. If $k \ge 5$, then color $v_3v_3'$ with $3$, $v_{k-1}v_k$ until $v_2v_3$ together with $v_2v_2'$ with $1_a, 1_b, \ldots,$ and $v_{k-1}v_{k-1}'$ until $v_4v_4'$ with $2_b, 2_a, \ldots$.

\textbf{Case 1.2.2:} Colors $2_a,2_b$ are used twice on $u_1v_1, v_1v_k, v_ku_k$. By symmetry, $u_1v_1, v_1v_k, v_ku_k$ are colored with either $1_a, 2_a, 2_b$ or $2_a, 1_a, 2_b$. In the former case, we can always recolor $v_1v_k$ to $1_b$ (for all possibilities of $f(vv_1), f(vv_k), f(vv')$), which was solved in \textbf{Case 1.2.1}. In the latter case, we can assume $f(vv_1), f(vv_k), f(vv') = 1_b, 3, 1_a$ by symmetry. We color $v_{k-1}v_k$ with $3$, $v_1v_2$ until $v_{k-2}v_{k-1}$ together with $v_{k-1}v_{k-1}'$ with $1_b, 1_a, \ldots$, and $v_2v_2'$ until $v_{k-2}v_{k-2}'$ with $2_b, 2_a, \ldots$.

\textbf{Case 2:} Color $3$ is used on $u_1v_1, v_1v_k, v_ku_k$. Then colors $2_a,2_b$ can be used twice, once, or zero times on $u_1v_1, v_1v_k,$ and $v_ku_k$. Note that if colors $2_a,2_b$ are used twice, then by symmetry $u_1v_1, v_1v_k, v_ku_k$ are colored with $2_a, 3, 2_b$ or $2_a,2_b, 3$, both of which are impossible since we only have two available colors $1_a,1_b$ for $vv_1, vv_k, vv'$.

\textbf{Case 2.1:} Colors $2_a,2_b$ are not used on $u_1v_1, v_1v_k, v_ku_k$. Then $f(u_1v_1), f(v_1v_k), f(v_ku_k)$ can be $1_a, 3, 1_a$, or $1_a, 3, 1_b$, or $3, 1_a, 1_b$ up to symmetry. If it is $1_a, 3, 1_a$, then we can always recolor $v_1v_k$ with $1_b$ (for all possibilities of $f(vv_1), f(vv_k),$ $ f(vv')$), which was solved in \textbf{Case 1}. Therefore, $f(u_1v_1), f(v_1v_k), f(v_ku_k)$ is $1_a, 3, 1_b$, or $3, 1_a, 1_b$. In the former case, we can always recolor $f(vv_1), f(vv_k), f(vv')$ to $1_b, 1_a, 2_a$. We color $v_{k-1}v_k, v_{k-1}v_{k-1}', v_{k-2}v_{k-1}$ with $1_a, 1_b, 2_a$, $v_1v_2$ until $v_{k-3}v_{k-2}$ together with $v_{k-2}v_{k-2}'$ with $1_b, 1_a, \ldots$, and (if $k \ge 5$) $v_{k-3}v_{k-3}'$ until $v_1v_1'$ with $2_b, 2_a, \ldots$. In the latter case, we color $v_{k-1}v_k$ with $2_a$, $v_1v_2$ until $v_{k-2}v_{k-1}$ together with $v_{k-1}v_{k-1}'$ with $1_b, 1_a, \ldots$, and $v_{k-2}v_{k-2}'$ until $v_1v_1'$ with $2_b, 2_a, \ldots$.

\textbf{Case 2.2:} Colors $2_a,2_b$ are used once on $u_1v_1, v_1v_k, v_ku_k$. Then $f(u_1v_1), f(v_1v_k), f(v_ku_k)$ can be $1_a, 3, 2_a$, or $1_a, 2_a, 3$, or $2_a, 1_a, 3$ up to symmetry. If it is $1_a, 3, 2_a$ or $1_a, 2_a, 3$, then we can always recolor $v_1v_k$ with $1_b$ (for all possibilities of $f(vv_1), f(vv_k),$ $ f(vv')$), which was solved in \textbf{Case 1} or \textbf{Case 2.1}. Therefore, $f(u_1v_1), f(v_1v_k), f(v_ku_k)$ are colored with  $2_a, 1_a, 3$ and $f(vv_1), f(vv_k),$ $ f(vv')$ can only be colored with $1_b, 2_b, 1_a$ or $2_b, 1_b, 1_a$. In the former case, we color $v_1v_2,v_1v_1',v_2v_3$ with $1_b,1_a,2_b$, $v_{k-1}v_k$ until $v_3v_4$ together with $v_3v_3'$ with $1_b,1_a,\ldots$, and (if $k \ge 5$) $v_4v_4'$ until $v_{k-1}v_{k-1}'$ with $2_a,2_b,\ldots$. In the latter case, we color $v_1v_2$ with $2_b$, $v_{k-1}v_k$ until $v_2v_3$ together with $v_2v_2'$ with $1_b,1_a,\ldots$, and $v_3v_3'$ until $v_{k-1}v_{k-1}'$ with $2_a, 2_b, \ldots$. 
\end{proof}

Let $P=F_0F_{\ell} \ldots F_2F_1$ be a longest path in the weak dual of $B_1$ starting at $F_0$. We first show $\ell \ge 2$.

\begin{lemma}\label{11223-two-layers}
$\ell \ge 2$.   
\end{lemma}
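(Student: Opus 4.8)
The plan is to argue by minimality exactly as in the proof of Lemma~\ref{122223-two-layers}, now with the five colors $1_a,1_b,2_a,2_b,3$. Since $B_1$ has at least two faces, its weak dual (a tree) has at least two vertices, so $\ell \ge 1$; it therefore suffices to rule out $\ell = 1$. If $\ell = 1$ then $P = F_0F_1$, so every face of $B_1$ meeting $F_0$ is a leaf of the weak dual, i.e.\ a pendant face, and by Lemma~\ref{11223-outside-1} each such face is a $3$-face. Hence $B_1$ is just $F_0$ with a collection of pendant triangles glued along boundary edges, together with the leaf edges forced on the degree-two boundary vertices (recall $G$ has no $2$-vertices). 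I would split the argument according to whether $F_0$ is itself a triangle or a $k$-gon with $k \ge 4$.

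First, suppose $F_0$ is a triangle $u_0u_1u_2$. Then $u_0$ has neighbors $u_1,u_2,v_0$, so no pendant face can sit on $u_0u_1$ or $u_0u_2$, and the only pendant face is the triangle $F_1 = u_1u_2u_3$ with leaf edge $u_3u_3'$; thus $B_1$ consists of exactly these two triangles. I delete $V(F_1)$ with its leaf edge to obtain a smaller graph $G'$ in which $u_0$ is incident only to the connector edge $u_0v_0$; by minimality $G'$ has a good coloring $f$, and up to the symmetry exchanging $1_a\leftrightarrow 1_b$ and $2_a\leftrightarrow 2_b$ we may assume $f(u_0v_0)\in\{1_a,2_a,3\}$. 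In each case the six edges $u_0u_1,u_0u_2,u_1u_2,u_1u_3,u_2u_3,u_3u_3'$ must be colored from $\{1_a,1_b,2_a,2_b,3\}$; only the edges within distance two of $u_0v_0$ are constrained, so a short check (coloring the two triangle ``bases'' with the two $1$-colors and distributing $2_a,2_b$ and, if needed, $3$) yields a valid extension. As no HC graph is present, a valid $(1^2,2^2,3)$-coloring is automatically good, contradicting the choice of $G$.

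Next, suppose $F_0$ is a $k$-gon with $k\ge 4$ and boundary cycle $u_0u_1\cdots u_k$. I delete $u_1,\dots,u_k$ together with all pendant triangles on $F_0$ and all their leaf edges, leaving a graph $G'$ whose good coloring $f$ determines $f(u_0v_0)$ up to symmetry. The extension recolors the boundary cycle of $F_0$, the pendant triangles, and all leaf edges. Following the single-face computations preceding Lemma~\ref{11223-outside-1}, the guiding idea is to run the alternating pattern $1_a,1_b,1_a,\dots$ around the boundary cycle and to place $2_a,2_b$ (and occasionally $3$) on the leaf edges; when the cycle has odd length, or when a pendant triangle forces the color $3$, a local reconfiguration in the two or three boundary edges nearest $u_0$ repairs the parity. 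As in Lemma~\ref{122223-two-layers}, each pendant triangle together with its leaf edges is processed through the corresponding admissible HC-graph coloring (types I--V of Fig.~\ref{house-with-chimney-2}), so that the extended coloring is again good; the pendant triangle with the largest boundary indices is treated separately to close the pattern. The cases are organized by $f(u_0v_0)$ (namely $1_a$, $2_a$ or $3$, up to symmetry) and by the parity of $k$.

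The step I expect to be the main obstacle is the bookkeeping of the extension in the second case. Unlike the $(1,2^4,3)$ setting of Lemma~\ref{122223-two-layers}, here the two matching colors $1_a,1_b$ and the two induced-matching colors $2_a,2_b$ play asymmetric roles, so the period-two boundary pattern interacts delicately with the pendant triangles and with the distance-at-least-four constraint imposed by color $3$. The crux is to keep the extension consistent at the seam near $u_0$---respecting both $f(u_0v_0)$ and the colors already forced on the edges incident to $v_0$---while simultaneously guaranteeing that every pendant-triangle gadget receives one of the five admissible HC-graph types; once this is arranged, all remaining verifications are routine distance checks.
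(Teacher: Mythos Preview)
Your proposal is correct and follows essentially the same approach as the paper: assume $\ell=1$, use Lemma~\ref{11223-outside-1} to reduce the pendant faces to triangles, delete $u_1,\dots,u_k$ together with all attached triangles and leaves, pull back a good coloring $f$ of the smaller graph, and extend by running an alternating $1_a,1_b$ pattern on the boundary of $F_0$ while feeding each pendant triangle through one of the admissible HC-graph types I--V, splitting on $f(u_0v_0)\in\{1_a,2_a,3\}$. The only organizational difference is that you peel off the case where $F_0$ is a triangle and treat it by a direct six-edge check, whereas the paper keeps $k\ge 2$ in a single argument and absorbs the small-$k$ situations into the general HC-graph replacement; both routes work and the bookkeeping you flag as the crux is exactly what the paper carries out case by case.
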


\begin{proof}
Suppose not, i.e., $\ell = 1$ ($F_2 = F_0$) and $F_2$ has boundary cycle $u_0u_1 \ldots u_k$, $k \ge 2$. By Lemma~\ref{11223-outside-1}, every pendant face except $F_2$ is a $3$-face. Since each of $u_i$ is a $3$-vertex, let the other neighbour not in $F_2$ to be $u_i'$, $1 \le i \le k$. If there is a $3$-face sitting on top of $u_i,u_{i+1}$, then $u_i' = u_{i+1}'$ and let the leaf attached on it be called $u_i''$. We delete $u_1, u_1', \ldots, u_k, u_k'$ and all $u_i''$ (if exists) to obtain a graph $G'$. By the minimality of $G$, $G'$ has a good coloring $f$. By symmetry, we may assume $f(v_0u_0)$ is either $1_a,2_a,$ or $3$. Note that when $k = 3$ we may assume the only pendant face is on top of $u_1,u_2$. 

Let $F$ be a pendant face adjacent to $F_2$ with $V(F) \cap V(F_2) = \{u_i,u_{i+1}\}$, $1 \le i \le k-1$. We may now replace $F$ with a $4$-face $u_iu_i'u_{i+1}'u_{i+1}u_i$. Furthermore, we may add vertices $u_i'',w_i$ with edges $u_i''u_i', u_i''u_{i+1}', u_i''w_i$ to form an HC graph $H_i$. In this way, we are coloring a larger graph than $G'$ and can use the proof in the future. At the same time, we can always use the colors of $u_iu_i', u_{i+1}u_{i+1}', u_i'u_{i+1}'$ of $H_i$ to color the edges $u_iu_i', u_{i+1}u_{i+1}', u_i'u_i''$ of $G$.

\textbf{Case 1:} $f(v_0u_0) = 1_a$. We can always color $u_0u_1$ with $1_b$. We can color $u_0u_k$ with $2_a$ or $2_b$ unless $\{f(v_0v_0'), f(v_0v_0'')\} = \{2_a, 2_b\}$. In this case, each of $v_0'$ and $v_0''$ must see $1_a$ and $1_b$, and hence we can color $u_0u_k$ with $3$. 

\textbf{Case 1.1:} We can color $u_0u_k$ with $2_a$. In case $k = 3$, we may assume there is an HC graph $H_1$ on top of $u_1,u_2$. We color $u_1u_2, u_2u_3, u_3u_3'$ with $2_b, 1_a, 1_b$, and $H_1$ with Type I coloring. Otherwise, $k \neq 3$. We color $u_1u_2, \ldots, u_{k-1}u_k$ together with $u_ku_k'$ with $1_a, 1_b, \ldots$, (if $k \ge 4$) $u_1u_1', \ldots, u_{k-2}u_{k-2}'$ with $2_b, 2_a, \ldots$. If there is an HC graph $H_{k-2}$ on top of $u_{k-2}, u_{k-1}$, then we recolor $u_{k-1}u_k$ with $2_b$, $H_{k-2}$ with Type IV coloring, and other HC graphs with Type II coloring. If there is no HC graph on top of $u_{k-2}, u_{k-1}$, color $u_{k-1}u_{k-1}'$ with $3$, and all HC graphs with Type II coloring (except use Type IV coloring on the HC graph on top of $u_{k-1}, u_k$ if it exists).

\textbf{Case 1.2:} We can color $u_0u_k$ with $3$. We color $u_1u_2, \ldots, u_{k-1}u_k$ together with $u_ku_k'$ with $1_a, 1_b, \ldots$, $u_1u_1', \ldots, u_{k-1}u_{k-1}'$ with $2_a, 2_b, \ldots$. If there is an HC graph $H_{k-1}$ on top of $u_{k-1}, u_k$, then we switch the colors of $u_{k-1}u_k$ and $u_{k-1}u_{k-1}'$ and can color $H_{k-1}$ with Type V coloring. We use type II coloring for other HC graphs. Note that if $k = 3$ we assume by symmetry that the HC graph is on top of $u_2,u_3$ and can use the aforementioned coloring procedure.

\textbf{Case 2:} $f(v_0u_0) = 2_a$ or $3$. We color $u_0u_1, u_0u_k$ with $1_a, 1_b$. If $k$ is odd, color $u_1u_2, \ldots, u_{k-1}u_k$ with $1_b, 1_a, \ldots, 1_a$, $u_1u_1', \ldots, u_ku_k'$ with $2_b, 2_a, \ldots, 2_b$, all HC graphs with Type II coloring. If $k$ is even, then color $u_1u_1', u_1u_2,$ $ u_2u_2'$ with $1_b, 2_b, 1_a$, and (if $k \ge 4$) $u_2u_3, \ldots, u_{k-1}u_k$ with $1_b, 1_a, \ldots, 1_a$, $u_3u_3', \ldots, u_ku_k'$ with $2_a, 2_b, \ldots, 2_b$. If there is an HC graph $H_1$ ($H_2$) on top of $u_1, u_2$ ($u_2, u_3$), then use Type I (Type III) coloring. We use type II coloring for other HC graphs.
\end{proof}

Then we show $F_2$ must be a $4$-face when $\ell \ge 2$.

\begin{lemma}\label{11223-outside-2}
$F_2$ is a $4$-face. 
\end{lemma}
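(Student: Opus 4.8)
The plan is to mirror the minimality-based extension argument of Lemma~\ref{122223-outside-2}, now adapted to the $(1^2,2^2,3)$ palette and the five HC coloring types of Fig.~\ref{house-with-chimney-2}. Suppose for contradiction that $F_2$ is a $k$-face with $k \ge 5$. Since $\ell \ge 2$ by Lemma~\ref{11223-two-layers}, the face $F_3$ adjacent to $F_2$ along $P$ exists; write the boundary cycle of $F_2$ as $v_1 v_2 \cdots v_k$ with $V(F_2) \cap V(F_3) = \{v_1, v_k\}$, and set $N(v_1) = \{u_1, v_2, v_k\}$ and $N(v_k) = \{u_k, v_1, v_{k-1}\}$.

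First I would build the smaller graph $G'$: delete $v_2, \ldots, v_{k-1}$ together with every pendant face hanging on $F_2$ and all their leaf edges, and attach an HC gadget along $v_1, v_k$ by adding vertices $z, z_1, z_2, z_k$ with edges $z_1v_1, z_kv_k, z_1z_k, z_1z_2, z_2z_k, z_2z$, exactly as in Lemma~\ref{122223-outside-2}. Then $|V(G')|+|E(G')| < |V(G)|+|E(G)|$, so by minimality $G'$ has a good coloring $f$, which pins down the colors on $u_1v_1$, $v_1v_k$, $v_ku_k$ and on the HC edges. As in the second paragraph of Lemma~\ref{11223-two-layers}, I would normalize every pendant face on $F_2$ to a $4$-face completed into an HC graph $H_i$ on $v_i, v_{i+1}$ (introducing $v_i', v_{i+1}', v_i''$ and a leaf $w_i$), so that all attached gadgets are uniform HC graphs and the inductive coloring of each $H_i$ can be reused when extending $f$.

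The heart of the argument is the case split on which of the five HC types $f$ places on the gadget over $v_1, v_k$. In each type the colors of $v_1v_k$ and its incident edges are fixed, so I would color $v_1v_2$ and $v_{k-1}v_k$ with the two colors forced by those neighbours and then run an alternating $1_a, 1_b$ pattern along the interior boundary edges $v_2v_3, \ldots, v_{k-2}v_{k-1}$, placing $2_a, 2_b$ (with a single $3$ inserted where parity forces it) on the leaf edges $v_iv_i'$. The two parities of $k$ require different closings near $v_{k-1}$, and whenever a pendant HC graph sits on $v_{k-2}, v_{k-1}$ --- the slot where the interior pattern meets the colors inherited from $F_3$ --- I would recolor $v_{k-2}v_{k-1}$ with $1_a$, $1_b$, or $3$ and finish that gadget explicitly; every other $H_i$ is then colored by recoloring $v_iv_{i+1}$ with a $1$-color and pushing its former color onto $v_{i+1}v_{i+1}'$, exactly as in the final cases of Lemma~\ref{11223-two-layers}.

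The main obstacle will be the bookkeeping at the interface $v_{k-2}, v_{k-1}$: this is the only place where the internally chosen alternating pattern must agree simultaneously with the colors forced by the HC gadget (hence by $F_3$) and with the distance-two and distance-three constraints of any pendant HC graph mounted there. Because there are now two $1$-colors, five gadget types, and the extra color $3$ to exploit, each parity class branches further according to whether $v_{k-1}$ carries a pendant HC graph and which type that gadget takes, so the number of explicit closing configurations is noticeably larger than in the $(1,2^4,3)$ analogue Lemma~\ref{122223-outside-2}. Once the lemma is in hand, the accompanying reduction (the analogue of Remark~\ref{122223-outside-2-rem}, forcing $F_1$ to be a $3$-face whenever $F_2$ is a $4$-face) follows from the same local recoloring trick. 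I do not anticipate a genuinely new difficulty: in every branch it is a finite check that the five free colors $1_a, 1_b, 2_a, 2_b, 3$ can be arranged to respect all distance conditions.
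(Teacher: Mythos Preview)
Your proposal is correct and follows essentially the same approach as the paper: the same reduction to $G'$ with an HC gadget on $v_1,v_k$, the same normalization of pendant faces to HC graphs $H_i$, and the same five-way case split on the HC type at $v_1,v_k$, with alternating $1_a,1_b$ patterns on the interior boundary and $2_a,2_b$ (plus an occasional $3$) on the leaves, together with special handling of the gadget at the $v_{k-2},v_{k-1}$ interface. One small remark: the analogue of Remark~\ref{122223-outside-2-rem} you mention is not needed here, since Lemma~\ref{11223-outside-1} already forces every pendant face other than $F_0$ to be a $3$-face (not merely a $3$- or $4$-face as in the $(1,2^4,3)$ setting).
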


\begin{proof}
By Lemma~\ref{11223-two-layers}, $\ell \ge 2$. Suppose $F_2$ is a $k$-face, where $k \ge 5$. Let the boundary cycle of $F_2$ be $v_1 \ldots v_k$ with $V(F_3) \cap V(F_2) = \{v_1, v_k\}$. We delete $v_2, \ldots, v_{k-1}$ and all pendant faces hang on $F_2$ with all their leaves, and add vertices $z,z_1,z_2,z_k$ with edges $z_1v_1,z_kv_k,z_1z_k,z_1z_2,z_2z_k,z_2z$ (HC graph $H$) to obtain a graph $G'$. By the minimality of $G$, $G'$ has a good coloring $f$. We extend $f$ to $G'$. Note that each $v_i$ with no pendant faces on top has a leaf neighbour $v_i'$, $2 \le i \le k-1$. Let $F$ be a pendant face adjacent to $F_2$ with $V(F) \cap V(F_2) = \{v_i,v_{i+1}\}$, $2 \le i \le k-1$. We may now replace $F$ with a $4$-face $v_iv_i'v_{i+1}'v_{i+1}v_i$ and add vertices $v_i'',w_i$ with edges $v_i''v_i', v_i''v_{i+1}', v_i''w_i$ to form an HC graph $H_i$. In this way, we are coloring a larger graph than $G'$ and can use the proof in the future. At the same time, we can always use the colors of $v_iv_i', v_{i+1}v_{i+1}', v_i'v_{i+1}'$ of $H_i$ to color the edges $v_iv_i', v_{i+1}v_{i+1}', v_i'v_i''$ of $G$. Let $N(v_1) = \{u_1, v_2, v_k\}$ and $N(v_k) = \{u_k,v_1, v_{k-1}\}$. 

\textbf{Case 1:} The HC graph $H$ uses type I coloring, i.e., $u_1v_1,v_1v_k,v_ku_k,v_1z_1,v_kz_k,z_1z_k,z_1z_2,z_2z_k,z_2z$ are colored with $1_a, 2_a, 1_b, 1_b, 1_a, 2_b, 1_a, 1_b, 2_a$. If $k$ is odd, then we color $v_1v_2$ until $v_{k-1}v_{k}$ with $1_b, 1_a, \ldots, 1_b, 1_a$, $v_2v_2'$ until $v_{k-1}v_{k-1}'$ with $2_b, 2_a, \ldots, 2_b$. If there is an HC graph on top of $v_i, v_{i+1}$, $2 \le i \le k-2$, then we use Type II coloring to complete the extension. Let $j$, $2 \le j \le k-2$, be the smallest index such that there is an HC graph $H_j$ on top of $v_jv_{j+1}$. If $k$ is even, then $k \ge 6$, we color $v_1v_2, \ldots, v_{j-1}v_j, v_{j+1}v_{j+2}, \ldots, v_{k-1}v_k$ with $1_b, 1_a, \ldots, 1_b, 1_a$, $v_2v_3, v_4v_4', \ldots, v_{k-1}v_{k-1}'$ with $2_b, 2_a, \ldots, 2_b$ when $j = 2$ and $v_2v_2', \ldots, v_{j-1}v_{j-1}', v_jv_{j+1}, v_{j+2}v_{j+2}', \ldots, v_{k-1}v_{k-1}'$ with $2_b, 2_a, \ldots,$ $ 2_b$ when $3 \le j \le k-3$ and $v_2v_2', \ldots, v_{j-1}v_{j-1}', v_jv_{j+1}$ with $2_b, 2_a, \ldots, 2_b$ when $j = k-2$, and color $H_j$ with Type I coloring, and the remaining HC graphs with Type II coloring.

\textbf{Case 2:} $H$ uses type II coloring, i.e., $u_1v_1,v_1v_k,v_ku_k,v_1z_1,v_kz_k,z_1z_k,z_1z_2,z_2z_k,z_2z$ are colored with $1_a, 1_b, 1_a, 2_a, 2_b,$ $1_b, 1_a, 3, 1_b$. We color $v_1v_2, v_{k-1}v_k$ with $2_a,2_b$. If $k = 5$, then we may assume there is an HC graph $H_2$ on top of $v_2,v_3$, color $v_2v_3,v_3v_4,v_4v_4'$ with $1_b, 1_a, 1_b$, and $H_2$ with type IV coloring. If $k \ge 6$ and we color $v_2v_2', v_2v_3, \ldots, v_{k-2}v_{k-1}$ together with $v_{k-1}v_{k-1}'$ with $1_a, 1_b, \ldots$. When there is no HC graph on top of $v_3,v_4$, we color $v_3v_3'$ with $3$ and $v_{k-2}v_{k-2}', \ldots, v_4v_4'$ with $2_a, 2_b, \ldots$. We use type IV coloring if there is an HC graph on $v_2,v_3$. When there is an HC graph $H_3$ on top of $v_3,v_4$, we color $v_3v_3',v_4v_4'$ with $1_b,3$, recolor $v_2v_3$ with $2_b$, and (if $k \ge 7$) color $v_{k-2}v_{k-2}', \ldots, v_5v_5'$ with $2_a, 2_b, \ldots$, and use type IV coloring on $H_3$. We use Type III coloring if there is an HC graph on $v_{k-2},v_{k-1}$. Since at most one of $v_iv_i'$ uses color $3$, we use Type II coloring for other HC graphs in the way that all pairs of color $3$s are at distance at least $4$. 

\textbf{Case 3:} $H$ uses type III coloring, i.e., $u_1v_1,v_1v_k,v_ku_k,v_1z_1,v_kz_k,z_1z_k,z_1z_2,z_2z_k,z_2z$ are colored with $2_a, 1_a, 1_b, 1_b, 2_b,$ $ 1_a, 3, 1_b, 1_a$. We color $v_1v_2, v_{k-1}v_k$ with $1_b, 2_b$. If $k$ is odd, we color $v_2v_3, \ldots, v_{k-2}v_{k-1}$ together with $v_{k-1}v_{k-1}'$ with $1_a, 1_b, \ldots, 1_a$, $v_2v_2', \ldots, v_{k-2}v_{k-2}'$ with $2_b, 2_a, \ldots, 2_a$. We use Type III coloring for the HC graph on $v_{k-2}, v_{k-1}$ (if there is one) and Type II coloring on other HC graphs. If $k$ is even, then $k \ge 6$, we color $v_2v_2', v_2v_3, v_3v_3'$ with $1_a, 2_b, 1_b$, $v_3v_4, \ldots, v_{k-2}v_{k-1}$ together with $v_{k-1}v_{k-1}'$ with $1_a, 1_b, \ldots, 1_a$, $v_4v_4', \ldots, v_{k-2}v_{k-2}'$ with $2_a, 2_b, \ldots, 2_a$. We use Type I coloring for the HC graph on $v_2,v_3$ (if there is one), Type III coloring for the HC graph on $v_{k-2}, v_{k-1}$ (if there is one), and Type II coloring for other HC graphs.

\textbf{Case 4:} $H$ uses type IV coloring, i.e., $u_1v_1,v_1v_k,v_ku_k,v_1z_1,v_kz_k,z_1z_k,z_1z_2,z_2z_k,z_2z$ are colored with $2_a, 1_a, 1_b, 1_b, 3, $ $2_b, 1_a, 1_b, 2_a$. We color $v_1v_2, v_{k-1}v_k$ with $1_b,3$. We color $v_2v_3, \ldots, v_{k-2}v_{k-1}$ together with $v_{k-1}v_{k-1}'$ with $1_a, 1_b, \ldots$, $v_2v_2', \ldots, v_{k-2}v_{k-2}'$ with $2_b, 2_a, \ldots$. If there is an HC graph $H_{k-2}$ on top of $v_{k-2},v_{k-1}$, we switch the colors of $v_{k-2}v_{k-1}$ and $v_{k-2}v_{k-2}'$, and use Type V coloring on $H_{k-2}$. We use Type II coloring for other HC graphs in the way that all pairs of color $3$s are at distance at least $4$.

\textbf{Case 5:} $H$ uses type V coloring, i.e., $u_1v_1,v_1v_k,v_ku_k,v_1z_1,v_kz_k,z_1z_k,z_1z_2,z_2z_k,z_2z$ are colored with $3, 2_a, 1_a, 1_a, 1_b,$ $2_b, 1_b, 1_a, 2_a$. We color $v_{k-1}v_k$ with $1_b$. If $k$ is odd, we color $v_{k-2}v_{k-1}, \ldots, v_1v_2$ with $1_a, 1_b, \ldots, 1_a$, $v_2v_2', \ldots, v_{k-1}v_{k-1}'$ with $2_b, 2_a, \ldots, 2_b$. We color all HC graphs with Type II coloring. If $k$ is even, we color $v_{k-1}v_{k-1}', v_{k-2}v_{k-1}, v_{k-2}v_{k-2}'$ with $1_a, 2_b, 1_b$, $v_{k-3}v_{k-2}, \ldots, v_1v_2$ with $1_a, 1_b, \ldots, 1_a$, $v_2v_2', \ldots, v_{k-3}v_{k-3}'$ with $2_b, 2_a, \ldots, 2_a$. If there is an HC graph $H_{k-2}$ ($H_{k-3}$) on top of $v_{k-2},v_{k-1}$ ($v_{k-3},v_{k-2}$), then we color $H_{k-2}$ ($H_{k-3}$) with Type I (Type III) coloring. We use Type II coloring for other HC graphs.
\end{proof}

Now, we complete the proof. By Lemma~\ref{11223-two-layers}, $\ell \ge 2$. We split the proof into two cases.

\textbf{Case i:} $\ell = 2$. By Lemma~\ref{11223-outside-2}, $B_1$ can be described as $F_0$ with a few leaf edges, $3$-faces with a leaf edge, and HC graphs attached on it. We can use the same argument used in the proof of Lemma~\ref{11223-two-layers}, since we treated every $3$-face with a leaf edge as an HC graph in the coloring procedure. In case $B_1$ is the only non-trivial block and $F_0$ is a $3$-face, by Lemma~\ref{11223-outside-1} and~\ref{11223-outside-2}, $F_2$ is a $4$-face and $F_1$ is a $3$-face. We color both HC graphs with Type I coloring.

\textbf{Case ii:} $\ell \ge 3$. Let $V(F_3) \cap V(F_4) = \{v_1, v_k\}$. We delete $V(F_3) - \{v_1, v_k\}$, all leaf edges, all $3$-faces with a leaf edge, all HC graphs attached on $F_3$, and add vertices $z,z_1,z_2,z_k$ with edges $z_1v_1,z_kv_k,z_1z_k,z_1z_2,z_2z_k,z_2z$ (HC graph) to obtain a graph $G'$. By the minimality of $G$, $G'$ has a good coloring $f$. We can use the argument used in the proof of Lemma~\ref{11223-outside-2}, since we treated every $3$-face with a leaf edge as an HC graph in the color-extending procedure. \hfill \qed

\subsection{$(1^2,2^2,k_2')$-packing edge-coloring of $2$-connected subcubic outerplanar graphs}\label{1122k'}

We provide a $2$-connected subcubic outerplanar graph that is not $(1^2,2^2,12)$-colorable, showing $k_2' \le 11$.

\begin{example}
Let $G_2$ and $G_3$ be the graph shown in Fig.~\ref{example_7}. We show $G_3$ is a $2$-connected subcubic outerplanar graph that is not $(1^2,2^2,12)$-packing edge-colorable. 
\end{example}

\begin{figure}[ht]
\begin{center}
 \vspace{-9mm}
\includegraphics[scale=0.5]{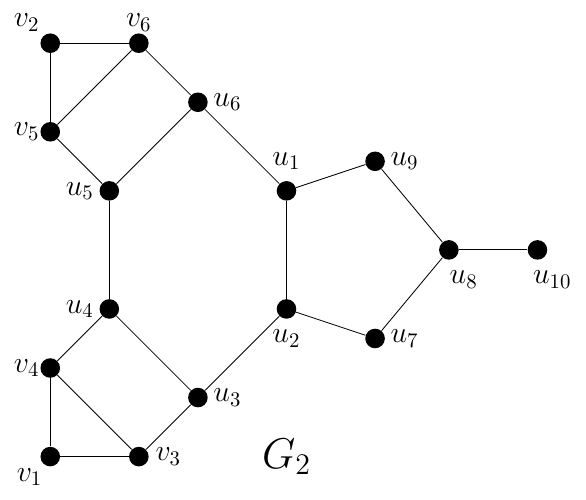} \hspace{10mm}
\includegraphics[scale=0.45]{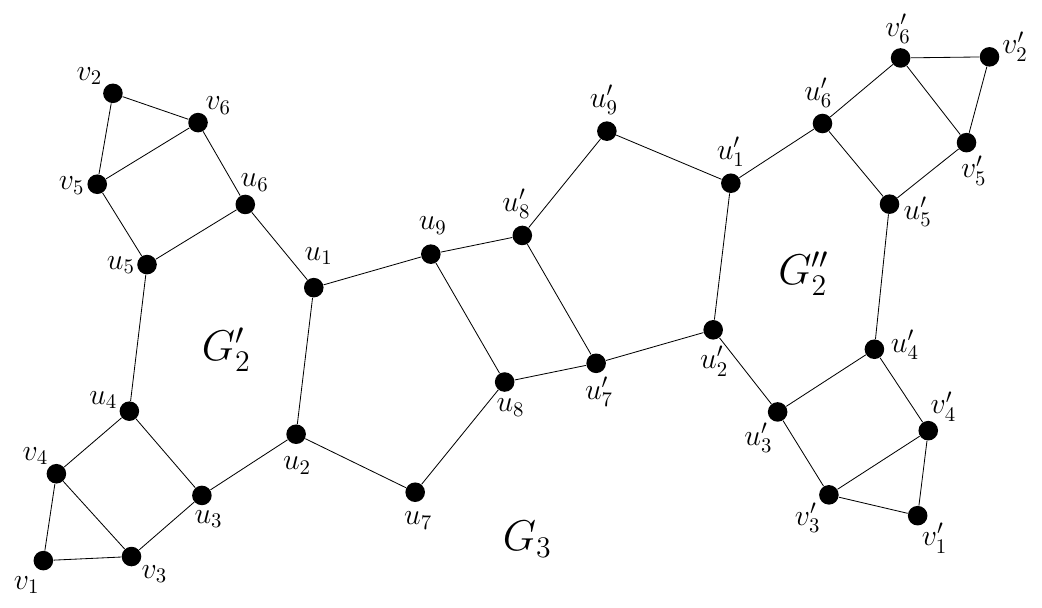}%
\vspace{-3mm}
\caption{A $2$-connected subcubic outerplanar graph that is not $(1^2,2^2,12)$-packing edge-colorable.}\label{example_7}
\end{center}
\vspace{-8mm}
\end{figure}

\begin{proof}
We first show $G_2$ has no $(1^2,2^2)$-coloring. Suppose $G_2$ has a $(1^2,2^2)$-coloring $f$. By Claim~\ref{house1122}, $G_0$ in Example~\ref{ex1} has only two $(1^2,2^2)$-coloring up to symmetry and the two copies of $G_0$ in $G_2$ must both be of Type II. We may assume $f(u_2u_3) = 1_a, f(u_3u_4) = 2_a, f(u_4u_5) = 1_b, f(u_5u_6) = 2_b, f(u_1u_6) = 1_a$. Therefore, $f(u_1u_2) = 1_b, f(u_1u_9) = 2_a, f(u_2u_7) = 2_b$ and $\{f(u_8u_9), f(u_7u_8)\} = \{1_a, 1_b\}$. However, we cannot color $u_8u_{10}$, which is a contradiction.

Suppose $G_3$ has a $(1^2,2^2,12)$-coloring. Note that $G_3$ has two disjoint copies of $G_2$, i.e., $G_2'$ and $G_2''$ in Fig.~\ref{example_7}. Therefore, each of $G_2'$ and $G_2''$ must use color $12$ at least once. However, the longest possible distance from an edge in $G_2'$ to $G_2''$ is $12$ (realized by $v_1v_4$ and $v_2'v_5'$), which is a contradiction.
\end{proof}

\vspace{-3mm}
\section{Open Problems}

We have shown in this paper that $3 \le k_1 \le 6$, $k_1' = 2$, $3 \le k_2 \le 4$, and $3 \le k_2' \le 11$. We have a proof showing $k_2' \ge 4$. However, we decided to not include it in this paper since the argument is similar to the proof of Theorem~\ref{11223-main-theorem}. We tend to believe that $k_2 = 4$ and applying stronger conditions to the $(1^2,2^2,4)$-coloring may lead to a proof. It would be nice if the remaining three numbers $k_1,k_2,k_2'$ can be determined exactly. We post those questions as open problems. 

\begin{itemize}
    \item What is the largest $k_1$ such that every subcubic outerplanar graph is $(1,2^4,k_1)$-colorable?

    \item What is the largest $k_2$ such that every subcubic outerplanar graph is $(1^2,2^2,k_2)$-colorable?

    \item What is the largest $k_2'$ such that every $2$-connected subcubic outerplanar graph is $(1^2,2^2,k_2')$-colorable?
\end{itemize}

\vspace{-5mm}

\end{document}